\definecolor{black}{rgb}{0.0, 0.0, 0.0}
\newcommand{\margnote}[1]{
\ifthenelse{\boolean{shownotes}}%
{\marginpar{\raggedright\tiny\texttt{#1}}}%
{}%
}
\newcommand{\hole}[1]{
\ifthenelse{\boolean{shownotes}}%
{\begin{center} \fbox{ \rule {.25cm}{0cm} \rule[-.1cm]{0cm}{.4cm}
\parbox{.85\textwidth}{\begin{center} \texttt{#1}\end{center}} \rule
{.25cm}{0cm}}\end{center}} {} }
\title[Strong solutions to the inhomogeneous Navier-Stokes-BGK system]{Strong solutions to the inhomogeneous Navier-Stokes-BGK system}
\author[Choi]{Young-Pil Choi}
\address[Young-Pil Choi]{\newline Department of Mathematics\newline
Yonsei University, Seoul 03722, Republic of Korea}
\email{ypchoi@yonsei.ac.kr}
\author[Lee]{Jaeseung Lee}
\address[Jaeseung Lee]{\newline The Research Institute of Basic Sciences \newline Seoul National University, Seoul 08826, Republic of Korea}
\email{jaeseunglee@snu.ac.kr}
\author[Yun]{Seok-Bae Yun}
\address[Seok-Bae Yun]{\newline Department of Mathematics \newline
Sungkyunkwan University, Suwon 440-746, Republic of Korea}
\email{sybun01@skku.edu}
\subjclass{}
\keywords{}
\numberwithin{equation}{section}
\newtheorem{theorem}{Theorem}[section]
\newtheorem{lemma}{Lemma}[section]
\newtheorem{proposition}{Proposition}[section]
\newtheorem{remark}{Remark}[section]
\newtheorem{definition}{Definition}[section]
\newcommand{\R}{\mathbb R}
\newcommand{\ls}{\lesssim}
\newcommand{\T}{\mathbb T}
\newcommand{\bq}{\begin{equation}}
\newcommand{\eq}{\end{equation}}
\newcommand{\e}{\varepsilon}
\newcommand{\lt}{\left}
\newcommand{\rt}{\right}
\newcommand{\pa}{\partial}
\begin{document}
\allowdisplaybreaks

\date{\today}


\begin{abstract}
In this paper, we are concerned with the local-in-time well-posedness of a fluid-kinetic model in which the BGK model with density dependent collision frequency is coupled with the inhomogeneous Navier-Stokes equation through drag forces. To the best knowledge of authors, this is the first result on the existence of local-in-time smooth solution for particle-fluid model with nonlinear inter-particle operator for which the existence of time can be prolonged as the size of initial data gets smaller.
\end{abstract}

\keywords{Fluid-Kinetic model, BGK model, inhomogeneous Navier-Stokes equations, spray models,
global existence, strong solutions.}


\maketitle \centerline{\date}

\tableofcontents

%
%
%
%
\section{Introduction} \label{sec:1}
Sprays are complex flows consisting of dispersed particles in underlying gas, for instances, spray in the air, fuel-droplets suspended in the cylinder in the combustion process of engines, pollutants floating in the air or water. The evolution of such particle-fluid system can be described in various ways according to the corresponding physical situation and the modeling assumptions. In this paper, we consider the case where the relaxation through inter-particle collisions and the drag of the surrounding fluid compete, which is described by the BGK model coupled with the inhomogeneous Navier-Stokes equations through drag forces:
\begin{gather}\begin{gathered} \label{main}
\pa_t f + v \cdot \nabla_x f + \nabla_v \cdot((u-v) f) = \rho_f(\mathcal M(f)-f), \\
\pa_t \rho + \nabla_x \cdot (\rho u) = 0,\\
\pa_t (\rho u) + \nabla_x \cdot (\rho u \otimes u) + \nabla_x p - \mu \Delta_x u = -\int_{\R^3}(u-v)f \,dv, \\
\nabla_x \cdot u = 0,
\end{gathered}\end{gather}
subject to initial data:
\begin{align}\begin{aligned}\label{ini_main}
(f(x,v,0),\rho(x,0), u(x,0)) =:(f_0(x,v),\rho_0(x), u_0(x)), \quad (x,v) \in \T^3 \times \R^3.
\end{aligned}\end{align}
Here, $f = f(x,v,t)$ denotes the number density function of the immersed particles on the phase space of position $x \in \T^3$ and velocity $v \in \R^3$ at time $t > 0$ , and $\rho = \rho(x,t)$ and $u = u(x,t)$ are the local density and bulk velocity of the fluid, respectively. For simplicity, we assume that the viscosity coefficient $\mu = 1$ throughout the paper.  The local Maxwellian $\mathcal M(f)$ is defined by
\[
\mathcal M(f)(x,v,t) = \frac{\rho_f(x,t)}{\sqrt{(2\pi T_f(x,t))^3}} \exp \left(-\frac{|v-U_f(x,t)|^2}{2T_f(x,t)} \right),
\]
where the macroscopic fields of local particle density $\rho_f$, local particle velocity $U_f$, and local particle temperature $T_f$ are given by
\begin{align*}
\rho_f(x,t) &:= \int_{\R^3}f(x,v,t)\,dv,\\
\rho_f(x,t)U_f(x,t) &:= \int_{\R^3}vf(x,v,t)\,dv, \quad \mbox{and} \\
3\rho_f(x,t)T_f(x,t) &:= \int_{\R^3}|v-U_f(x,t)|^2 f(x,v,t)\,dv.
\end{align*}
An explicit computation gives the following cancellation property:
\[
\int_{\R^3}(\mathcal M(f) - f)\begin{pmatrix}
1\\v\\|v|^2
\end{pmatrix}dv = 0,
\]

Particle-fluid models have received immense attention recently since the situation of particles drafting in fluid arises very often in nature or engineering, and the coupling of kinetic equations and fluid equations addresses various interesting mathematical problems and modeling issues. We can roughly divide the literature on the mathematical theory of such kinetic-fluid model into two categories according to whether
the collisional interactions between the immersed particles are taken into account or not. In the absence of collisional interactions,
Vlasov or Vlasov-Fokker-Planck type equations coupled with various fluid equations are investigated. For the existence of
the weak solutions of such collisionless particle-fluid models, we refer to \cite{BDGM, CCK, CK, Ha, MV, WY}. Results on the strong solutions can be found in \cite{CDM, CKL}. Particle-kinetic models involving local-alignment phenomena between the immersed particles can be found in \cite{BCHK,BCHK14, CL}.
We now turn to literature including particle-particle collisions.
In \cite{BDM,YY} the existence of weak solutions for Vlasov-Navier-Stokes equations with a linear particle operator that explains the break-up of droplets is considered. In \cite{M}, Mathiaud obtained the existence of local-in-time classical solution for the Navier-Stokes-Boltzmann equation when the initial data is a small perturbation of a global Maxwellian. In \cite{CY}, the authors obtained the existence of global-in-time existence of weak solutions under the condition of finite mass, energy and entropy. In \cite{Choi16, Choi17}, large-time behavior of solutions and finite-time blow-up phenomena of particle-fluid systems are considered.

A brief review on the BGK model is also in order. The BGK models \cite{BGK} have been very popularly employed in physics and engineering as a satisfactory relaxational approximation of the Boltzmann equation which suffers severely from high computational cost. The existence theory for the BGK model is first established by Perthame \cite{Pe} in which the weak solution is obtained under the condition of finite mass momentum and energy. For the initial data with appropriate decay in the velocity space, a unique existence is established in \cite{PP93}. These results are adapted and extended, for example, to $L^p$ problem \cite{ZH}, gases under the influence of external forces or mean-fields \cite{Zhang}, gas mixture problem in which the gas consists of more one type of gas molecules \cite{KP}, ellipsoidally generalized BGK model introduced to better calibrate fluid coefficients \cite{Yun2}, and polyatomic molecules formed by bonds of more than one atom \cite{PY,Yun21}. The existence of classical solution near equilibrium and their asymptotic equilibrization can be found in \cite{Yun1, Yun3}. For the studies on the stationary problems for the BGK model, see \cite{BY,Ukai-BGK}. BGK model is also fruitfully employed in the derivation of various macroscopic or hydrodynamic models \cite{V,DMOS,LT,Mellet,MMM,SR,SR1,SR2}.
The literature on the numerical applications of the BGK model are immense, we refer to \cite{CP,DP,FJ,M,PP07,RSY,XH} and references therein for interested readers.

To the best knowledge of the authors, the only result on the existence of classical solutions for particle-kinetic models involving collisional interactions between immersed particle is established in \cite{M} (for weak solutions, see \cite{CY}), in which
Mathiaud considers a local-in-time existence for a fluid-kinetic model constructed from the coupling of the Navier-Stokes equation with the Boltzmann equation near a
global Maxwellian under the assumption that the high order energy functional is sufficiently small. In \cite{M}, however, the exchange between the length of the life span and the size of the initial data does not occur. That is, no matter how small an initial perturbation we take in the energy norm,  the life span of the solution cannot be extended over a certain fixed time. In this paper, we show that such restriction can be removed, at least for the case of the BGK type relaxation operator. We also mention that the global-in-time existence of strong solution for the relaxation operator with nontrivial collision frequency remains open even for the non-coupled classical BGK model.

To precisely state our main result, we first define the notion of a strong solution.
\begin{definition} \label{def}
For a given time $T \in (0,\infty)$, we say that $(f,\rho,u)$ is a strong solution to system \eqref{main}-\eqref{ini_main} if it satisfies the system in the sense of distributions with the following regularity:
\begin{eqnarray} \label{reg}
&&(i)~ f \in \mathcal C([0,T];W_q^{1,\infty}(\T^3 \times \R^3)) \mbox{ with }q>5, \cr
&&(ii)~ \rho \in \mathcal C([0,T];H^3(\T^3)), \cr
&&(iii)~ u \in \mathcal C([0,T];H^2(\T^3)) \cap L^2(0,T;H^3(\T^3)).
\end{eqnarray}
\end{definition}

Our main results read as follows (see Notation below the statement of the theorem for the definitions of function spaces):

\begin{theorem} \label{thm_main}
Fix $T \in (0,\infty)$. Then, there exists $\varepsilon>0$, which depends only on $T$, such that for any initial data $(f_0,\rho_0,u_0)$ satisfying the following conditions:
\begin{eqnarray*}
&& (i)~\inf_{x \in \T^3} \rho_0(x) > 0, \quad \rho_0  \in H^3(\T^3), \cr
&& (ii)~\sum_{|\nu|\leq 1} ess\sup_{x,v}(1+|v|)^q |\nabla^\nu f_0(x,v)| + \|u_0\|_{H^2(\T^3)} < \varepsilon, \quad \mbox{and}\cr
&& (iii)~f_0 > \varepsilon_1(1+|v|)^{-(q+3+a)}, \quad \mbox{for some} \quad\e_1 >0 \mbox{ and }a>0,
\end{eqnarray*}
the system \eqref{main}-\eqref{ini_main} admits the unique strong solution $(f, \rho, u)$.
\end{theorem}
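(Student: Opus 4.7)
The plan is to construct the solution by a Picard-type iteration, derive uniform bounds in the regularity class of Definition \ref{def}, and pass to the limit. Starting from $(f^0,\rho^0,u^0):=(f_0,\rho_0,u_0)$ extended constantly in time, I would define $f^{n+1}$ as the solution of the linearized transport-relaxation equation
\begin{equation*}
\partial_t f^{n+1} + v\cdot\nabla_x f^{n+1} + \nabla_v\cdot((u^n-v)f^{n+1}) + \rho_{f^n} f^{n+1} = \rho_{f^n}\mathcal M(f^n),
\end{equation*}
and $(\rho^{n+1},u^{n+1})$ as the solution of the inhomogeneous Navier-Stokes system in which the drag is split as $-\rho_{f^n}u^{n+1} + \rho_{f^n}U_{f^n}$, so that the $u^{n+1}$ contribution acts as a non-negative zeroth-order damping on the fluid and the remaining source involves only previously constructed quantities.

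For the uniform a priori bounds, I would use the method of characteristics $\dot X=V$, $\dot V=u^n(X,t)-V$ together with the Duhamel formula to control $(1+|v|)^q f^{n+1}$ and its first derivatives pointwise; since $q>5$, the moments $\rho_{f^n}$, $\rho_{f^n}U_{f^n}$, and $\rho_{f^n}T_{f^n}$ are then bounded in $L^\infty(\T^3)$ by this single weighted norm. For $(\rho^{n+1},u^{n+1})$, I would invoke the standard regularity theory for the inhomogeneous incompressible Navier-Stokes equation with a Lipschitz external source and a bounded non-negative zeroth-order damping, producing bounds in $L^\infty_t H^3\times (L^\infty_t H^2\cap L^2_t H^3)$ that depend polynomially on the kinetic norm of $f^n$. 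The smallness of $u_0$ and of the weighted norm of $f_0$ is what lets me absorb the nonlinear terms in $u^{n+1}$ and close the estimate on any prescribed interval $[0,T]$ by a continuity/bootstrap argument, which is the very mechanism that allows the life span to be enlarged by shrinking $\varepsilon$.

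A parallel ingredient is propagation of the pointwise lower bound from hypothesis (iii): the same characteristic representation yields $f^{n+1}(x,v,t)\geq c(T)(1+|v|)^{-(q+3+a)}$, which in turn forces $\rho_{f^n}$ and $T_{f^n}$ to remain bounded away from $0$ uniformly on $[0,T]\times\T^3$. This uniform non-degeneracy is what makes $\mathcal M(f^n)$ smooth and locally Lipschitz in $f^n$ in the topology of the a priori estimates. With uniform bounds and uniform non-degeneracy in hand, I would estimate the differences $(f^{n+1}-f^n,\rho^{n+1}-\rho^n,u^{n+1}-u^n)$ in a weaker topology ($L^\infty$ for the kinetic variable, $L^\infty_t L^2\cap L^2_t H^1$ for the fluid), close a Gr\"onwall inequality to extract a Cauchy sequence, and pass to the limit; uniqueness follows by applying the same difference estimate to two hypothetical solutions.

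The principal obstacle I foresee is the simultaneous closure of the weighted upper and lower bounds on $f$: the relaxation term $\rho_f(\mathcal M(f)-f)$ carries a factor $\rho_f$ that demands a uniform lower bound on the zeroth moment, while derivatives of $\mathcal M(f)$ in the macroscopic fields produce negative powers of $T_f$, so the weight $(1+|v|)^q$ with $q>5$ must be delicately balanced against the lower-bound weight $(1+|v|)^{-(q+3+a)}$. This is precisely where the argument should depart from \cite{M}: the linearity of the source in the BGK operator (once the moments are computed) lets the nonlinear Maxwellian feedback be absorbed by the damping $-\rho_f f$ on the left-hand side, whereas a Boltzmann-type collisional operator would force linearization around a global Maxwellian and thereby prevent the life span from being freely prescribed.
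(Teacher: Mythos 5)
Your proposal follows essentially the same route as the paper: a Picard iteration on a linearized NS--BGK system, weighted $L^\infty_q$ bounds on $f^{n+1}$ and its first derivatives via characteristics and Duhamel's formula, propagation of the initial lower bound (iii) along characteristics to keep $\rho_{f^n}$ and $T_{f^n}$ bounded away from zero (which is exactly what makes $\mathcal M(f^n)$ Lipschitz in the $\|\cdot\|_q$ norm), energy estimates for the fluid closed by the smallness of $\varepsilon$ relative to $T$, and a contraction argument in the weaker topology $L^\infty_q\times H^2\times(L^\infty_tH^1\cap L^2_tH^2)$. The only deviation is cosmetic: you make the drag term partially implicit ($-\rho_{f^n}u^{n+1}+\rho_{f^n}U_{f^n}$) whereas the paper keeps it fully explicit as $-\int(u^n-v)f^{n+1}\,dv$; both linearizations converge to the same limit system.
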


\begin{remark}
The initial positivity condition (iii) is necessary to guarantee the positivity of macroscopic field $\rho_{f}$, see Lemma \ref{lem_macro_bdd}.
\end{remark}

\noindent {\bf Notation.} Throughout the paper, $\nabla^k$ denotes any partial derivative $\pa^\alpha$ with multi-index $\alpha$, $|\alpha|=k$. We often omit $x$-dependence of differential operators for simplicity of notation. We denote by C a generic, not necessarily identical, positive constant. The relation $A \lesssim B$ denotes the inequality $A \leq CB$ for such a generic constant. Below we introduce the norms and function spaces to be used in the paper.
\begin{quote}
$\bullet$ For functions $f(x, v), g(x)$, $\|f\|_{L^p}$ and $ \|g\|_{L^p}$ denote the usual $L^p(\T^3 \times \R^3)$-norm and $L^p(\T^3)$-norm, respectively.\\
$\bullet$ We use the following weighted norms for $f(x,v)$:

\begin{align*}
&\|f\|_{q} := \|f\|_{L^\infty_q} := ess\sup_{x,v}(1+|v|)^q f(x,v) ,\quad \|f\|_{W_q^{1,\infty}} := \sum_{|\nu| \leq 1}\| \nabla^\nu f\|_q.
\end{align*}
 $L_q^\infty(\T^3 \times \R^3)$ and $ W_q^{1,\infty}(\T^3 \times \R^3)$ naturally denote the spaces of functions with finite corresponding norms.
\\
$\bullet$  $H^s(\T^3)$ denotes the $s$-th order $L^2(\T^3)$ Sobolev space.
\end{quote}

The rest of the paper is organized as follows. In Section \ref{sec:2}, we introduce several lemmas regarding boundedness properties of the macroscopic fields $(\rho_f, U_f, T_f)$ and the local Maxwellian $\mathcal M(f)$, which will be heavily used throughout the paper. In Section \ref{sec:3}, a sequence of approximation systems to \eqref{main}-\eqref{ini_main} is constructed. In Section \ref{sec:4}, we prove that the sequence of solutions constructed in Section \ref{sec:3} is indeed a Cauchy sequence and the limit is the solution of the system \eqref{main} in the sense of Definition \ref{def}. 

%
%
%
%
\section{Preliminaries} \label{sec:2}
We present a series of lemmas that will be crucially used throughout the paper.

\begin{lemma} \cite{PP93} \label{lem_temp}
There exists a positive constant $C_q$, which depends only on $q$, satisfying
\begin{itemize}
\item[(i)] $\rho_f \leq C_q \|f\|_q T_f^{3/2}  \quad (q>3~\mbox{or}~q=0)$,
\item[(ii)] $\rho_f(T_f + |U_f|^2)^{(q-3)/2} \leq C_q \|f\|_q \quad (q>5~\mbox{or}~q=0)$,
\item[(iii)] $\rho_f|U_f|^{q+3}((T_f+|U_f|^2)T_f)^{-3/2} \leq C_q \|f\|_q. \quad (q>1~\mbox{or}~q=0)$,
\end{itemize}
for almost everywhere $x \in \T^3$.
\end{lemma}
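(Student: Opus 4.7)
The plan is to establish all three pointwise bounds through velocity-space splits combined with the pointwise decay $f(x,v) \leq \|f\|_q(1+|v|)^{-q}$ and the macroscopic moment identities, following the line of Perthame--Pulvirenti. For (i), I would shift to $w = v - U_f(x,t)$ so that the identity $\int_{\R^3}|w|^2 f\,dw = 3\rho_f T_f$ holds, and split $\rho_f = \int f\,dw$ at $|w| = R$. The interior piece is at most $CR^3\|f\|_\infty \leq CR^3\|f\|_q$, while Chebyshev bounds the exterior piece by $R^{-2}\int|w|^2 f\,dw = 3R^{-2}\rho_f T_f$. Choosing $R^2$ proportional to $T_f$ absorbs the exterior into the left-hand side and yields $\rho_f \leq CT_f^{3/2}\|f\|_q$.

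For (ii) with $q > 5$, the strategy is to interpolate the second moment of $f$ between $\|f\|_q$ and $\rho_f$. Splitting $\int|v|^2 f\,dv$ at $|v| = R$, using $|v|^2 \leq R^2$ on the interior (giving $R^2 \rho_f$) and the pointwise tail bound on the exterior (giving $C\|f\|_q R^{5-q}$, integrable since $q>5$), and then optimizing $R$, I obtain
\[
\int_{\R^3}|v|^2 f\,dv \leq C_q\|f\|_q^{2/(q-3)}\rho_f^{(q-5)/(q-3)}.
\]
Combining this with the identity $\int|v|^2 f\,dv = \rho_f(3T_f+|U_f|^2)$ and raising both sides to the power $(q-3)/2$ produces (ii). The $q=0$ case is the direct analogue of (i): split $\rho_f = \int f\,dv$ at $|v| = R$ with $R^2 \sim T_f + |U_f|^2$ and apply Chebyshev against the same second-moment identity.

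For (iii), I would separate into the two regimes $|U_f|^2 \lesssim T_f$ and $|U_f|^2 \gtrsim T_f$. In the first, $(T_f+|U_f|^2)T_f \sim T_f^2$ and $|U_f|^{q+3} \lesssim T_f^{(q+3)/2}$, so (iii) reduces to a consequence of (i) and (ii). In the second regime, Chebyshev applied to the temperature identity shows that at least half the mass of $f$ lies in $\{|v - U_f| \lesssim \sqrt{T_f}\}$; in this region $|v| \gtrsim |U_f|$, so the pointwise weight gives $f \leq C\|f\|_q |U_f|^{-q}$, hence $\rho_f \leq C\|f\|_q T_f^{3/2}|U_f|^{-q}$, which plugged into the left-hand side of (iii) closes the inequality.

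The main obstacle is the interpolation step in (ii) for $q>5$ (and the corresponding concentration step in (iii)): the naive moment integral $\int_{\R^3} |v|^{q-3}(1+|v|)^{-q}\,dv$ diverges in three dimensions, so one cannot just combine a finite moment bound with Jensen's inequality to reach the sharp exponent $(q-3)/2$. The split-and-optimize argument sketched above circumvents this by trading the forbidden moment for a balanced combination of a finite moment and the pointwise tail decay, and verifying both the correct exponents and the uniformity of the constant $C_q$ in this optimization is where the bulk of the careful bookkeeping lies.
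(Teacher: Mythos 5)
The paper never proves this lemma---it is quoted from Perthame--Pulvirenti \cite{PP93}---so there is no internal argument to compare with; what you give is essentially the standard \cite{PP93} route (pointwise decay $f\le\|f\|_q(1+|v|)^{-q}$, velocity-space splitting, Chebyshev against the second-moment identity, optimization of the splitting radius). Your arguments for (i) and (ii) are correct as sketched: the exponent bookkeeping in the interpolation $\int_{\R^3}|v|^2f\,dv\le C_q\|f\|_q^{2/(q-3)}\rho_f^{(q-5)/(q-3)}$ checks out (the exterior integral $\int_R^\infty r^4(1+r)^{-q}\,dr\le C_qR^{5-q}$ is exactly where $q>5$ enters), and the $q=0$ cases are handled correctly. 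Your item (iii) is also correct in the range the paper actually uses, namely $q>5$ and $q=0$: the concentration argument in the regime $|U_f|^2\gtrsim T_f$ is fine once the threshold constant is fixed (e.g.\ $|U_f|^2\ge 24\,T_f$, so that $|v|\ge|U_f|/2$ on the Chebyshev ball $\{|v-U_f|\le\sqrt{6T_f}\}$), and in the complementary regime the needed bound $\rho_fT_f^{(q-3)/2}\le C_q\|f\|_q$ does follow from (ii) when $q>5$ and from (i) when $q=0$.

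The one genuine shortfall is against the literal statement of (iii), which is claimed for all $q>1$. Your regime-1 step replaces $|U_f|^{q+3}$ by $T_f^{(q+3)/2}$ and then needs $\rho_fT_f^{(q-3)/2}\lesssim\|f\|_q$, which you can only obtain from (ii), i.e.\ for $q>5$; moreover that intermediate inequality is actually false for $3<q<5$: taking $f(v)=(1+|v|)^{-q}\mathbf{1}_{\{|v|\le R\}}$ gives $\|f\|_q=1$, $\rho_f\sim 1$, $T_f\sim R^{5-q}$, hence $\rho_fT_f^{(q-3)/2}\sim R^{(5-q)(q-3)/2}\to\infty$, while the left-hand side of (iii) vanishes there since $U_f=0$. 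So the loss comes precisely from discarding $|U_f|$ in favour of $\sqrt{T_f}$, and for the intermediate range $1<q\le 5$ one must keep $|U_f|$ in the estimate (or argue as in \cite{PP93}) rather than reduce to (i)--(ii); alternatively, restrict the claim to $q>5$ or $q=0$, which is the only range in which the paper ever invokes the lemma. With that caveat (and the routine fixing of the regime-splitting constant), your proof is a valid reconstruction for the paper's purposes.
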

We now show that the $\|\cdot\|_q$-norm of a generalized local Maxwellian $\mathcal M_\gamma(f)$ with $\gamma > 0$ can be controlled by that of $f$. Although the proof is essentially given in \cite{PP93}, we provide it here for the completeness of our present work.
\begin{lemma} \label{lem_M_bdd}
Suppose $\|f\|_q < \infty$ for $q>5$, and let $\gamma >0$ be given. Then there exists a positive constant $C_{q,\gamma}$, which depends only on $q$ and $\gamma$, such that
\[
\|\mathcal M_\gamma(f)\|_q \leq C_{q,\gamma} \|f\|_q, \quad (q>5~\mbox{or}~q=0),
\]
where
\[
\mathcal M_\gamma(f) :=  \frac{\rho_f}{\sqrt{(2\pi T_f)^3}}\exp\left(-\gamma\frac{|v-U_f|^2}{2T_f}\right).
\]
In particular, if $\gamma=1$, then $\mathcal{M}_1(f) = \mathcal{M}(f)$ and
\[
\| \mathcal M(f)\|_q \leq C_q \|f\|_q, \quad (q>5~\mbox{or}~q=0).
\]
\end{lemma}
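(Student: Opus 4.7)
The plan is to establish the pointwise bound $(1+|v|)^q \mathcal M_\gamma(f)(x,v) \leq C_{q,\gamma} \|f\|_q$ uniformly in $(x,v)$, from which the claim follows by passing to the essential supremum. Following \cite{PP93}, the strategy is to balance the polynomial weight $(1+|v|)^q$ against the Gaussian factor and the prefactor $\rho_f/T_f^{3/2}$, and to invoke the three estimates of Lemma \ref{lem_temp} in regimes of $(T_f,|U_f|)$ where each is individually effective.

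My first step would be the elementary decomposition $(1+|v|)^q \leq C_q((1+|U_f|)^q + |v-U_f|^q)$, which bounds the target by
\[
C_q\,\frac{\rho_f}{T_f^{3/2}} \Bigl[(1+|U_f|)^q\, e^{-\gamma|v-U_f|^2/(2T_f)} + |v-U_f|^q\, e^{-\gamma|v-U_f|^2/(2T_f)}\Bigr].
\]
For the velocity-difference piece, maximizing $s^q e^{-\gamma s^2/(2T_f)}$ in $s=|v-U_f|$ produces a factor at most $C_{q,\gamma}T_f^{q/2}$, so this piece is controlled by $C_{q,\gamma}\rho_f T_f^{(q-3)/2} \leq C_{q,\gamma}\rho_f(T_f+|U_f|^2)^{(q-3)/2}$, and Lemma \ref{lem_temp}(ii) finishes it (valid since $q>5>3$).

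For the $(1+|U_f|)^q$ piece, I would discard the exponential and reduce matters to $\rho_f(1+|U_f|)^q/T_f^{3/2} \leq C_q \|f\|_q$. Writing $(1+|U_f|)^q \leq C_q(1+|U_f|^q)$, the constant part is immediate from Lemma \ref{lem_temp}(i), while the $|U_f|^q$ part will be handled by a case split on the sign of $T_f-|U_f|^2$. In the regime $|U_f|^2 \geq T_f$, dividing Lemma \ref{lem_temp}(iii) by $|U_f|^3 T_f^{3/2}$ yields $\rho_f |U_f|^q/T_f^{3/2} \leq C_q \|f\|_q (T_f+|U_f|^2)^{3/2}/|U_f|^3 \leq C_q \|f\|_q$. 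In the regime $|U_f|^2 < T_f$, rewriting Lemma \ref{lem_temp}(ii) as $\rho_f \leq C_q\|f\|_q T_f^{(3-q)/2}$ gives $\rho_f|U_f|^q/T_f^{3/2} \leq C_q\|f\|_q (|U_f|^2/T_f)^{q/2} \leq C_q \|f\|_q$. The trivial case $q=0$ follows at once from Lemma \ref{lem_temp}(i).

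The main point of care, and the step I expect to require the most bookkeeping rather than any single new idea, is the interlocking of the three estimates in Lemma \ref{lem_temp}: each becomes ineffective in a different corner of $(T_f,|U_f|)$-space (when $T_f$ dominates, when $|U_f|^2$ dominates, or when both are small), and the case split above is engineered precisely so that in every regime at least one of (i)--(iii) produces a clean $\|f\|_q$-bound.
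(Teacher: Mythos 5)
Your proposal is correct and follows essentially the same route as the paper: the decomposition of the weight into a $|v-U_f|^q$ part (killed by the Gaussian via $s^qe^{-\gamma s^2/(2T_f)}\lesssim_{q,\gamma} T_f^{q/2}$ and Lemma \ref{lem_temp}(ii)) and a $|U_f|^q$ part handled by the case split $|U_f|^2\gtrless T_f$ with Lemma \ref{lem_temp}(iii) and (ii), plus Lemma \ref{lem_temp}(i) for the unweighted bound. The only cosmetic difference is that you fold the unweighted estimate into the decomposition of $(1+|v|)^q$ rather than treating $\mathcal M_\gamma(f)$ and $|v|^q\mathcal M_\gamma(f)$ separately.
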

\begin{proof}
We provide the estimates on $\mathcal M_\gamma(f)$ and $|v|^q \mathcal M_\gamma(f)$, seperately. \\
\noindent $\bullet$ (Estimate of $\mathcal M_\gamma(f)$): It follows from Lemma \ref{lem_temp} (i) that
\begin{align*}
\mathcal M_\gamma(f) \leq \frac{\rho_f}{\sqrt{(2\pi T_f)^3}} \leq C_q \|f\|_q.
\end{align*}
\noindent $\bullet$ (Estimate of $|v|^q \mathcal M_\gamma(f)$): We first estimate
\begin{align*}
|v|^q \mathcal M_\gamma(f) \leq C_q \left(|U_f|^q + |v-U_f|^q \right) \mathcal M_\gamma(f) =: \mathcal I_1 + \mathcal I_2,
\end{align*}
where $\mathcal I_1$ can be bounded as
\begin{align*}
\mathcal I_1 = C_q |U_f|^q  \frac{\rho_f}{\sqrt{(2\pi T_f)^3}}\exp\left(-\gamma\frac{|v-U_f|^2}{2T_f}\right) \leq C_q \frac{|U_f|^q \rho_f}{T_f^{3/2}}.
\end{align*}
We now estimate $\mathcal I_1$ by considering two cases: $|U_f| > T_f^{1/2}$ and $|U_f| \leq T_f^{1/2}$.
If $|U_f| > T_f^{1/2}$, we have
\begin{align*}
\mathcal I_1 \leq C_q \frac{|U_f|^{q+3} \rho_f}{|U_f|^3 T_f^{3/2}} \leq C_q \frac{|U_f|^{q+3} \rho_f}{(T_f+|U_f|^2)^{3/2}T_f^{3/2}} \leq C_q \|f\|_q,
\end{align*}
where we used Lemma \ref{lem_temp} (iii) for the last inequality. On the other hand, if $|U_f| \leq T_f^{1/2}$, we use Lemma \ref{lem_temp} (ii) to get
\begin{align*}
\mathcal I_1 \leq C_q \frac{|U_f|^q \rho_f}{T_f^{3/2}} \leq C_q \rho_f T_f^{\frac{q-3}{2}} \leq C_q \rho_f (T_f + |U_f|^2)^{\frac{q-3}{2}} \leq C_q\|f\|_q,
\end{align*}
due to $q > 5$. For $\mathcal I_2$, we get
\begin{align*}
\mathcal I_2 &= C_q |v-U_f|^q \frac{\rho_f}{\sqrt{(2\pi T_f)^3}}\exp\left(-\gamma\frac{|v-U_f|^2}{2T_f}\right) \\
&= C_q \rho_f T_f^{(q-3)/2} \lt( \lt(\frac{|v-U_f|^2}{2T_f}\rt)^{q/2}\exp\lt(-\gamma\frac{|v-U_f|^2}{2T_f}\rt)\rt) \\
&\leq C_{q,\gamma} \rho_f T_f^{(q-3)/2} \leq C_{q,\gamma} \rho_f (T_f + |U_f|^2)^{(q-3)/2} \leq C_{q,\gamma} \|f\|_q.
\end{align*}
Here, we employed the fact $x^{q/2}e^{-\gamma x} \ls 1$ for all $x \geq 0$ and Lemma \ref{lem_temp} (ii).
Finally, the estimates above yield that
\[
\|\mathcal M_\gamma(f)\|_q \leq ess\sup_{x,v}\lt((1+|v|)^q\mathcal M_\gamma(f)(x,v)\rt) \leq C_{q,\gamma} \|f\|_q.
\]
\end{proof}
\begin{lemma} \cite{Yun2} \label{lem_lip}
Assume $f,g$ satisfy ($h$ denotes either $f$ or $g$)
\begin{itemize}
\item[(i)] $\|h\|_q < C_1$,
\item[(ii)] $\rho_h + |U_h| + |T_h| < C_2$,
\item[(iii)] $\rho_h, T_h > C_3$,
\end{itemize}
for some constants $C_i > 0, i=1,2,3$. Then, we have
\[
\| \mathcal M(f) - \mathcal M(g)\|_q \leq C\|f-g\|_q,
\]
where $C>0$ depends only on $C_i (i=1,2,3)$.
\end{lemma}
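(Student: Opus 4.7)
The plan is to express $\mathcal{M}(f)-\mathcal{M}(g)$ as an integral along the linear path connecting the macroscopic parameters of $g$ to those of $f$, then estimate each factor separately: the increments of the parameters, which are Lipschitz in $f$ with respect to $\|\cdot\|_q$, and the partial derivatives of the Maxwellian in the parameters, which are bounded in $\|\cdot\|_q$ uniformly along the path.

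First I would establish Lipschitz dependence of the macroscopic fields. The estimate $|\rho_f - \rho_g|(x) \lesssim \|f-g\|_q$ follows directly from $q>3$ and $\int(1+|v|)^{-q}\,dv<\infty$. For the bulk velocity, write
\[
\rho_f(U_f - U_g) = \int_{\R^3} v(f-g)\,dv - (\rho_f - \rho_g)U_g,
\]
and divide by $\rho_f \geq C_3$, using $|U_g|\leq C_2$ and $q>5$ to bound $\int|v|(1+|v|)^{-q}dv$. For the temperature, use the identity $3\rho_f T_f = \int |v|^2 f\,dv - \rho_f|U_f|^2$ and the same algebraic trick, with $q>5$ ensuring $\int |v|^2(1+|v|)^{-q}dv<\infty$. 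Hence
\[
|\rho_f-\rho_g|(x)+|U_f-U_g|(x)+|T_f-T_g|(x) \lesssim \|f-g\|_q \quad \text{for a.e. } x.
\]

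Next, I introduce the parametric Maxwellian $\widetilde{\M}(\rho,U,T;v)=\frac{\rho}{(2\pi T)^{3/2}}\exp(-|v-U|^2/(2T))$ and set $(\rho_s,U_s,T_s):=(1-s)(\rho_g,U_g,T_g)+s(\rho_f,U_f,T_f)$ for $s\in[0,1]$. Because the hypotheses impose two-sided bounds on $\rho_h,T_h$ and an upper bound on $|U_h|$ that are all preserved under convex combination, we have $\rho_s,T_s\in[C_3,C_2]$ and $|U_s|\leq C_2$ along the entire path. The fundamental theorem of calculus gives
\[
\M(f)-\M(g)=\int_0^1 \bigl[\pa_\rho\widetilde{\M}\,(\rho_f-\rho_g)+\pa_U\widetilde{\M}\cdot(U_f-U_g)+\pa_T\widetilde{\M}\,(T_f-T_g)\bigr]\,ds,
\]
where the partial derivatives are evaluated at $(\rho_s,U_s,T_s)$.

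A direct computation shows that each $\pa_*\widetilde{\M}$ equals $\widetilde{\M}$ times a polynomial in $(v-U_s)$ whose coefficients depend on $1/\rho_s$ and $1/T_s$, all uniformly bounded thanks to the lower bound $C_3$. Using $(1+|v|)^q \leq C(C_2)(1+|v-U_s|)^q$ and the fact that $(1+|v-U_s|)^{q+2}\exp(-|v-U_s|^2/(2T_s))$ is uniformly bounded on $T_s\in[C_3,C_2]$, I obtain $\|\pa_*\widetilde{\M}(\rho_s,U_s,T_s;\cdot)\|_q \leq C$ uniformly in $s$ and $x$ (this is essentially a parametric version of the estimate proved in Lemma \ref{lem_M_bdd}, and requires only the explicit bounds on $\rho_s,U_s,T_s$). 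Taking the $L^\infty_q$ norm pointwise in $x$ and then the essential supremum yields $\|\M(f)-\M(g)\|_q \leq C\|f-g\|_q$.

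The main obstacle I anticipate is technical rather than conceptual: one must verify that the polynomial factors $(v-U_s)^k$ produced by differentiating in $U$ and $T$ are genuinely absorbed by the Gaussian, uniformly in $s$. This works cleanly only because $T_s$ stays bounded away from zero (preventing Gaussian degeneracy) and $|U_s|$ stays bounded (so the weight $(1+|v|)^q$ is comparable to $(1+|v-U_s|)^q$). Both properties are secured by the assumptions (ii)--(iii), which is precisely what makes the lemma true.
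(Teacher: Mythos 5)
Your proof is correct. The paper does not prove this lemma itself---it is quoted from \cite{Yun2}---and your argument (pointwise Lipschitz estimates for $(\rho_f,U_f,T_f)$ in terms of $\|f-g\|_q$ using $\rho_f\geq C_3$ and $q>5$, followed by the fundamental theorem of calculus along the segment of macroscopic parameters, with the $\|\cdot\|_q$-norms of $\pa_\rho\widetilde{\mathcal M},\pa_U\widetilde{\mathcal M},\pa_T\widetilde{\mathcal M}$ bounded uniformly thanks to the two-sided bounds on $\rho_s,T_s$ and the upper bound on $|U_s|$, exactly as in Lemma \ref{lem_macro}) is essentially the standard argument used there.
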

\begin{lemma}  \label{lem_macro}
Suppose $\|f\|_{q} < \infty$ for $q>5$, and $\rho_f,U_f$, and $T_f$ satisfy
\[
\rho_f + |U_f| + T_f <c_1 \quad \mbox{and} \quad \rho_f,T_f > c_2,
\]
for some positive constants $c_1$ and $c_2$. Then we have
\[
\|\nabla_{x,v}\mathcal M(f)\|_q \leq C(\| \nabla_x f\|_q +1)\|f\|_q,
\]
where $C$ is a positive constant depending on $c_1$ and $c_2$.
\end{lemma}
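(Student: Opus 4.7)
\medskip
\noindent\textbf{Proof plan for Lemma \ref{lem_macro}.} My plan is to differentiate $\mathcal M(f)$ explicitly, control the resulting derivatives of the macroscopic fields by $\|\nabla_x f\|_q$, and then absorb the polynomial factors in $|v-U_f|$ that appear from differentiating the Gaussian by comparing with the relaxed Maxwellian $\mathcal M_{1/2}(f)$, after which Lemma \ref{lem_M_bdd} closes the estimate.

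\medskip
\noindent\emph{Step 1 (differentiation).} A direct computation gives
\[
\nabla_v \mathcal M(f) = -\frac{v-U_f}{T_f}\,\mathcal M(f),
\]
and, via the logarithmic derivative,
\[
\nabla_x \mathcal M(f) = \mathcal M(f)\!\left[\frac{\nabla_x\rho_f}{\rho_f}-\frac{3\nabla_x T_f}{2T_f}+\frac{(v-U_f)\cdot\nabla_x U_f}{T_f}+\frac{|v-U_f|^2\,\nabla_x T_f}{2T_f^2}\right].
\]

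\medskip
\noindent\emph{Step 2 (macroscopic derivatives).} Writing $\rho_f=\int f\,dv$, $\rho_f U_f=\int v f\,dv$, and $3\rho_f T_f=\int |v|^2 f\,dv-\rho_f|U_f|^2$, I differentiate in $x$ under the integral and bound each moment by $\|\nabla_x f\|_q \int (1+|v|)^{k-q}\,dv$ with $k\le 2$, which is finite since $q>5$. Combined with the assumed upper bounds on $\rho_f,T_f,|U_f|$ and the lower bounds $\rho_f,T_f>c_2$, this yields
\[
|\nabla_x\rho_f|+|\nabla_x U_f|+|\nabla_x T_f|\leq C\,\|\nabla_x f\|_q,
\]
with $C$ depending only on $c_1,c_2$.

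\medskip
\noindent\emph{Step 3 (Gaussian absorption).} The key observation is that for any $k\ge 0$,
\[
|v-U_f|^k\,\mathcal M(f) \leq C_k\, T_f^{k/2}\,\mathcal M_{1/2}(f),
\]
since $s^{k/2}e^{-s/2}\le C_k$ for $s\ge 0$ (with $s=|v-U_f|^2/T_f$). Combined with Step~1, this gives the pointwise estimate
\[
|\nabla_v\mathcal M(f)|+|\nabla_x\mathcal M(f)| \leq C\bigl(1+\|\nabla_x f\|_q\bigr)\,\mathcal M_{1/2}(f),
\]
where the constant absorbs the upper and lower bounds on $\rho_f,T_f,|U_f|$.

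\medskip
\noindent\emph{Step 4 (conclusion).} Multiplying by $(1+|v|)^q$, taking the essential supremum over $(x,v)$, and applying Lemma \ref{lem_M_bdd} to $\mathcal M_{1/2}(f)$ (which requires $q>5$ and $\|f\|_q<\infty$, both in our hypotheses) yields
\[
\|\nabla_{x,v}\mathcal M(f)\|_q \leq C\bigl(1+\|\nabla_x f\|_q\bigr)\,\|f\|_q,
\]
which is the desired inequality.

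\medskip
The main obstacle is the $|v-U_f|^2$ factor produced by differentiating the exponential in the $x$ direction: naively estimating this against $(1+|v|)^q\mathcal M(f)$ would force the appearance of $\|f\|_{q+2}$ on the right-hand side. The resolution, and the only nontrivial point in the argument, is the pointwise comparison $|v-U_f|^k\mathcal M(f)\lesssim \mathcal M_{1/2}(f)$ in Step~3, which shifts the polynomial growth into the smaller exponential and thereby keeps the final bound at the same weight $q$.
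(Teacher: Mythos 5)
Your proposal is correct and follows essentially the same route as the paper: explicit differentiation of $\mathcal M(f)$, moment bounds giving $|\nabla_x\rho_f|+|\nabla_x U_f|+|\nabla_x T_f|\lesssim\|\nabla_x f\|_q$, and absorption of the polynomial factors $|v-U_f|^k$ into a relaxed Maxwellian (the paper's $\frac{\rho_f}{T_f^{3/2}}\exp(-|v-U_f|^2/(4T_f))$ is exactly your $\mathcal M_{1/2}(f)$ up to constants), closed by Lemma \ref{lem_M_bdd}. The only cosmetic difference is that you use the logarithmic derivative where the paper applies the chain rule through $\partial\mathcal M/\partial\rho_f$, $\partial\mathcal M/\partial U_f$, $\partial\mathcal M/\partial T_f$; these are the same computation.
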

\begin{proof}
\noindent We first provide derivatives of the local Maxwellian $\mathcal M(f)$ with respect to the macroscopic fields:
\[
 \frac{\pa \mathcal M (f)}{\pa \rho_f} = \frac{1}{\rho_f}\mathcal M(f), \quad \frac{\pa \mathcal M(f)}{\pa U_f} = \frac{v-U_f}{T_f}\mathcal M(f), \quad \mbox{and} \quad \frac{\pa \mathcal M(f)}{\pa T_f} = \left(-\frac{3}{2T_f} + \frac{|v-U_f|^2}{2T_f^2} \right)\mathcal M(f).
\]
We then give the estimates for $\| \cdot \|_q$-norm of each term above. We easily find
\bq \label{A-3}
\Big\| \frac{\pa \mathcal M(f)}{\pa \rho_f} \Big\|_q \lesssim \| \mathcal M(f)\|_q \lesssim \|f\|_q.
\eq
For the second one, note that
\begin{align*}
\begin{aligned} 
\lt| \frac{v-U_f}{T_f} \rt|\mathcal M(f) &= \sqrt{2}(2\pi)^{-\frac 32}\frac{\rho_f}{T_f^2} \lt| \frac{v-U_f}{\sqrt{2T_f}}   \rt| \exp\left(-\frac{|v-U_f|^2}{2T_f}    \right) \lesssim \frac{\rho_f}{T_f^{3/2}} \exp\left(-\frac{|v-U_f|^2}{4T_f}    \right).
\end{aligned}
\end{align*}
Here, we used the following simple inequality
\begin{equation} \label{simple-1}
xe^{-x^2} \lesssim  e^{-x^2/2} \quad \mbox{for all} \quad x \geq 0.
\end{equation}
Then, we use Lemma \ref{lem_M_bdd} to find
\begin{equation} \label{A-2}
\lt\| \frac{v-U_f}{T_f}\mathcal M(f)\rt\|_q \lesssim \left\|\frac{\rho_f}{T_f^{3/2}}\exp\left(-\frac{|v-U_f|^2}{4T_f}    \right)  \right\|_q     \lesssim \|f\|_q.
\end{equation}
In order to estimate the third one, we use the following inequality similar to \eqref{simple-1}:
\begin{equation*}
xe^{-x} \lesssim e^{-x/2} \quad \mbox{for all} \quad x \geq 0.
\end{equation*}
This yields
\begin{align*}
\frac{|v-U_f|^2}{2T_f^2} \mathcal M(f) \lesssim \frac{\rho_f}{T_f^{3/2}} \exp\lt(-\frac{|v-U_f|^2}{4T_f}  \rt),
\end{align*}
and subsequently, this with Lemma \ref{lem_M_bdd} gives
\[
\lt\| \frac{|v-U_f|^2}{2T_f^2}\mathcal M(f) \rt\|_q  \lesssim \|f\|_q.
\]
Thus we have
\begin{align} \label{A-4}
\lt\| \frac{\pa \mathcal M(f)}{\pa T_f} \rt\|_q \leq \lt\| \frac{3}{2T_f} \mathcal M(f) \rt\|_q + \lt\| \frac{|v-U_f|^2}{2T_f^2}\mathcal M(f) \rt\|_q \lesssim \|f\|_q.
 \end{align}
The first order derivatives of the macroscopic fields are given by
\begin{align*}
\nabla_x \rho_f &= \int_{\R^3} \nabla_x f \,dv, \\
\nabla_x U_f &= \nabla_x \left( \frac{1}{\rho_f} \int_{\R^3}vf \,dv \right) = -\frac{U_f}{\rho_f}\nabla_x \rho_f + \frac{1}{\rho_f} \int_{\R^3}v \nabla_x f \,dv, \\
\nabla_x T_f &= \frac{1}{3} \nabla_x \left(\frac{1}{\rho_f} \int_{\R^3} |v-U_f|^2 f\,dv \right) \\
&= \frac{1}{3}\lt(-\frac{\nabla_x \rho_f}{\rho_f^2} \int_{\R^3}|v-U|^2 f \,dv - \frac{1}{\rho_f}\int_{\R^3}2(v-U_f)f \nabla_x U_f \,dv + \frac{1}{\rho_f}\int_{\R^3}|v-U_f|^2 \nabla_x f \,dv \rt).
\end{align*}
Then we easily get
\begin{align} \label{A-4-1}
|\nabla_x \rho_f | \leq \int_{\R^3} | \nabla_x f|\,dv = \int_{\R^3} |\nabla_x f|(1+|v|)^q (1+|v|)^{-q}\,dv \lesssim \| \nabla_x f\|_q,
\end{align}
due to $q > 5$. Similarly, we also find
\begin{equation*}
| \nabla_x U_f|   \lesssim \| \nabla_x f \|_q \quad \mbox{and} \quad |\nabla_x T_f| \lesssim \| \nabla_x f\|_q.
\end{equation*}
This together with \eqref{A-2}, \eqref{A-3}, and \eqref{A-4} gives
\begin{align*}
\|\nabla_{x,v}\mathcal M(f)\|_q  &\leq \lt\| \frac{\pa \mathcal M(f)}{\pa \rho_f}\rt\|_q \|\nabla_x \rho_f\|_{L^\infty} + \lt\|\frac{\pa \mathcal M(f)}{\pa U_f}\rt\|_q \| \nabla_x U_f\|_{L^\infty} \cr
&\quad + \lt\|\frac{\pa \mathcal M(f)}{\pa T_f}\rt\|_q\|\nabla_x T_f\|_{L^\infty} + \lt\|\frac{|v-U_f|}{T_f}\mathcal M(f) \rt\|_q \\
&\lesssim (\| \nabla_x f\|_q +1) \|f\|_q.
\end{align*}
\end{proof}
%
%
%
%
\section{Global existence and uniqueness of approximation system} \label{sec:3}
We construct the sequence of approximation solutions to linearized systems of \eqref{main}. We consider following linearized NS-BGK system:
\begin{gather}\begin{gathered} \label{app_main}
\pa_t f^{n+1} + v \cdot \nabla_x f^{n+1} + \nabla_v \cdot ((u^n-v)f^{n+1}) = \rho_{f^n}(\mathcal M(f^n) - f^{n+1}),\\
\pa_t \rho^{n+1} + u^n \cdot \nabla_x \rho^{n+1} = 0, \\
\rho^{n+1} \pa_t u^{n+1} + \rho^{n+1}u^n \cdot \nabla_x u^{n+1} - \Delta_x u ^{n+1} + \nabla_x p^{n+1} = -\int_{\R^3}(u^n-v)f^{n+1}\,dv, \\
\nabla_x \cdot u^{n+1} = 0,
\end{gathered}\end{gather}
with the initial data and the first iteration step:
\begin{align} \label{init_app}
\begin{aligned}
(f^{n+1}(x,v,0),\rho^{n+1}(x,0),u^{n+1}(x,0)) &= (f_0(x,v),\rho_0(x),u_0(x)) \quad \mbox{and} \cr
(f^0(x,v,t),\rho^0(x,t),u^0(x,t)) &= (f_0(x,v),\rho_0(x),u_0(x))
\end{aligned}
\end{align}
for $n \geq 0$ and $(x,v,t) \in \T^3 \times \R^3 \times (0,T)$.

We now consider the backward characteristic  $Z^n(s):=(X^n(s),V^n(s)) := (X^n(s;t,x,v),V^n(s;t,x,v))$, $s,t \in [0,T]$ given by
\begin{align} \label{back}
\begin{aligned}
\frac{d}{ds}X^{n+1}(s) &= V^{n+1}(s),\\
\frac{d}{ds}V^{n+1}(s) &= u^n(X^{n+1}(s),s) - V^{n+1}(s),
\end{aligned}
\end{align}
subject to the terminal data:
\[
Z^{n+1}(t) = (x,v) =:z.
\]
We now provide the existence result for the approximation system \eqref{app_main}-\eqref{init_app}.
\begin{proposition} \label{prop}
Let $T \in (0,\infty)$ be an arbitrary fixed number. Suppose that the initial data $(f_0,\rho_0,u_0)$ satisfy the assumptions of Theorem \ref{thm_main}.
Choose $\varepsilon$ to satisfy $\varepsilon^{1-\beta}C_T<1$, where $C_T$ is given in the end of the proof. Then, if $f^n$ and $u^{n}$ satisfy the following conditions:
\begin{equation}
\begin{aligned} \label{n_condition}
&\sum_{|\nu|\leq 1}\|\nabla^\nu f^{n}\|_q < \varepsilon^\beta \quad \mbox{and}\\
&\|\pa_t u^n\|_{\mathcal C([0,T];L^2)} + \|\pa_t u^n\|_{L^2(0,T;H^1)} + \|u^n\|_{\mathcal C([0,T];H^2)} + \|u^n\|_{L^2(0,T;H^3)} < \varepsilon^\alpha,
\end{aligned}
\end{equation}
then there exists a unique solution $(f^{n+1},\rho^{n+1},u^{n+1})$ such that
\begin{align*}
\begin{aligned}
& \sum_{|\nu|\leq 1}\|\nabla^\nu f^{n+1}\|_q < \varepsilon^\beta, \quad \rho  \in \mathcal C([0,T];H^3(\T^3)), \quad \rho(x,t) \geq \delta > 0, \quad \forall (x,t) \in \T^3 \times [0,T], \quad \mbox{and}\\
& \|\pa_t u^{n+1}\|_{\mathcal C([0,T];L^2)} + \|\pa_t u^{n+1}\|_{L^2(0,T;H^1)} + \|u^{n+1}\|_{\mathcal C([0,T];H^2)} + \|u^{n+1}\|_{L^2(0,T;H^3)} < \varepsilon^\alpha,
\end{aligned}
\end{align*}
where $\delta = \inf_{x \in \T^3}\rho_0(x)$ is a positive constant and $\alpha,\beta$ are constants such that $0 < \alpha < \beta < 1$.
\end{proposition}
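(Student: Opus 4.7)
The linearized system \eqref{app_main} is triangular with respect to the iteration: $f^{n+1}$ depends only on $(f^n,u^n)$, $\rho^{n+1}$ only on $u^n$, and $u^{n+1}$ on $(\rho^{n+1},u^n,f^{n+1})$. The plan is to solve the three equations in this order using standard linear theory---characteristics for the kinetic and transport equations, and Galerkin approximation for the linear variable-density Navier--Stokes equation---at each step deriving a priori bounds that reproduce the induction hypotheses \eqref{n_condition}. The choice $0<\alpha<\beta<1$ is dictated by requiring the nonlinear products $\varepsilon^{2\beta}$ and $\varepsilon^{\alpha+\beta}$ to fall strictly below the targets $\varepsilon^\beta$ and $\varepsilon^\alpha$ once $\varepsilon^{1-\beta}C_T<1$. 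Uniqueness at each step follows from a linear energy estimate on the difference of two solutions.

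\noindent\textbf{Kinetic part.} Using $\nabla_v\cdot((u^n-v)f^{n+1})=(u^n-v)\cdot\nabla_v f^{n+1}-3f^{n+1}$, the first equation of \eqref{app_main} reduces along the backward flow \eqref{back} to the scalar ODE
\begin{equation*}
\tfrac{d}{ds}f^{n+1}\bigl(Z^{n+1}(s),s\bigr)=\bigl(3-\rho_{f^n}\bigr)f^{n+1}+\rho_{f^n}\mathcal M(f^n)\quad\text{at }(X^{n+1}(s),V^{n+1}(s),s),
\end{equation*}
whose mild solution is $f^{n+1}=E(0,t)f_0(Z^{n+1}(0))+\int_0^t E(s,t)\rho_{f^n}\mathcal M(f^n)(Z^{n+1}(s),s)\,ds$ with the integrating factor $E(s,t)=\exp\bigl(\int_s^t(\rho_{f^n}-3)\,d\tau\bigr)$. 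Multiplying by $(1+|v|)^q$, applying Lemma \ref{lem_temp}(i) to get $\rho_{f^n}\lesssim\|f^n\|_q\lesssim\varepsilon^\beta$ and Lemma \ref{lem_M_bdd} to get $\|\mathcal M(f^n)\|_q\lesssim\|f^n\|_q$, and using the characteristic bound $|V^{n+1}(s)|\gtrsim|v|-C_T\varepsilon^\alpha$ to transfer the weight back to time $t$, yields $\|f^{n+1}\|_q\le C_T(\varepsilon+\varepsilon^{2\beta})$. For $\nabla_{x,v}f^{n+1}$ I differentiate the mild formula; the Jacobians $\nabla_{x,v}(X^{n+1},V^{n+1})$ satisfy linear ODEs with coefficients involving $\nabla_x u^n$, so Gr\"onwall together with $\|\nabla u^n\|_{L^\infty}\lesssim\|u^n\|_{H^2}\lesssim\varepsilon^\alpha$ keeps them of size $e^{C_T\varepsilon^\alpha}$, and the derivatives of $\mathcal M(f^n)$ are handled by Lemma \ref{lem_macro}.

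\noindent\textbf{Density and fluid parts.} The continuity equation is pure transport by the divergence-free drift $u^n$: the flow $Y^n$ of $u^n$ yields $\rho^{n+1}(x,t)=\rho_0(Y^n(0;t,x))$, which preserves $\rho^{n+1}\ge\delta=\inf_x\rho_0$, and $\rho^{n+1}\in\mathcal C([0,T];H^3)$ follows from standard commutator estimates using $u^n\in L^2(0,T;H^3)$. For $u^{n+1}$ I treat the third equation as a linear variable-density Navier--Stokes system with known $\rho^{n+1}\ge\delta$, drift $u^n$, and source $-\int_{\R^3}(u^n-v)f^{n+1}\,dv$; existence follows from Galerkin approximation. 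The a priori bounds are obtained by successively testing against $u^{n+1}$ (basic energy, giving $L^\infty_tL^2\cap L^2_tH^1$), $\partial_t u^{n+1}$ (giving $\partial_t u^{n+1}\in\mathcal C_tL^2\cap L^2_tH^1$ and $u^{n+1}\in\mathcal C_tH^1$), and exploiting the Stokes-type identity $-\Delta u^{n+1}+\nabla p^{n+1}=-\rho^{n+1}(\partial_t u^{n+1}+u^n\cdot\nabla u^{n+1})-\int(u^n-v)f^{n+1}\,dv$ with elliptic regularity to reach $u^{n+1}\in\mathcal C_tH^2\cap L^2_tH^3$. The drag source is bounded via Lemma \ref{lem_temp} and the kinetic step above, contributing a term of order $\varepsilon^\beta(1+\varepsilon^\alpha)$.

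\noindent\textbf{Closing and main obstacle.} Aggregating the estimates produces $\|f^{n+1}\|_{W^{1,\infty}_q}\le C_T(\varepsilon+\varepsilon^{2\beta})$ and $\|u^{n+1}\|_{\mathcal C_tH^2}+\|\partial_t u^{n+1}\|_{\mathcal C_tL^2}+\cdots\le C_T(\varepsilon+\varepsilon^{\alpha+\beta})$, both of which are strictly below $\varepsilon^\beta$ and $\varepsilon^\alpha$ respectively once $\varepsilon^{1-\beta}C_T<1$ and $0<\alpha<\beta<1$, closing the induction. The principal technical obstacle is the propagation of \emph{lower} bounds on $\rho_{f^n}$ and $T_{f^n}$, which are indispensable for invoking Lemma \ref{lem_macro} on $\nabla\mathcal M(f^n)$. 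These are obtained by propagating the pointwise assumption (iii) through the mild representation---since $E(0,t)\gtrsim e^{-3T}$ and $|V^{n+1}(0)|$ grows at most exponentially in $|v|$, the envelope $f^{n+1}\gtrsim c_T(1+|v|)^{-(q+3+a)}$ survives---after which the positivity of $\rho_{f^{n+1}}$ and $T_{f^{n+1}}$ follows by $v$-integration; the extra decay exponent $a>0$ is precisely what guarantees convergence of these integrals with a uniform positive margin in $n$.
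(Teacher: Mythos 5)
Your proposal is correct and follows essentially the same route as the paper: Duhamel representation along the backward characteristics for $f^{n+1}$ (with the lower bound on $\rho_{f^n}$, $T_{f^n}$ obtained by pushing the initial envelope (iii) through that representation, exactly as in Lemma \ref{lem_macro_bdd}), Lagrangian transport for $\rho^{n+1}$, and energy estimates plus Stokes elliptic regularity for $u^{n+1}$ as in Appendix \ref{App}. The only deviations are immaterial: you differentiate the mild formula and track the flow Jacobian where the paper differentiates the PDE first and integrates along characteristics, and your integrating factor $E(s,t)=\exp\bigl(\int_s^t(\rho_{f^n}-3)\,d\tau\bigr)$ has the sign reversed (it should be $\exp\bigl(\int_s^t(3-\rho_{f^n})\,d\tau\bigr)$), which does not affect any of the bounds you draw from it.
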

We first note that the existence and uniqueness of the momentum equations in \eqref{app_main}, which is linear parabolic system, are well-known thanks to the semigroup theory, see \cite{Ka} for instance. We prove Proposition \ref{prop} through the following lemmas.
The next lemma gives the existence of positive lower bound and the regularity of the fluid density. Since the proof is similar to that of \cite[Lemma 2.2]{CK}, we omit it here.
\begin{lemma} \label{lem_rho}
Suppose that the initial data $\rho_0$ and $u^n$ satisfy the assumptions in Theorem \ref{thm_main} and \eqref{n_condition}.  Then, there exists a unique solution $\rho^{n+1}$ to $\eqref{app_main}$ such that
\begin{itemize}
\item[(i)] $\inf_{\T^3 \times [0,T]}\rho^{n+1} \geq \delta$ for some $\delta >0$,
\item[(ii)] $\sup_{0 \leq t \leq T}\|\rho^{n+1}(\cdot,t) \|_{H^3} \leq C\| \rho_0 \|_{H^3}$,
\end{itemize}
where $C>0$ is independent of $n$.
\end{lemma}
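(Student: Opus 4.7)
The plan is to treat the equation for $\rho^{n+1}$ in \eqref{app_main} as a linear transport equation driven by the prescribed divergence-free velocity $u^n$, whose regularity is controlled by \eqref{n_condition}. First I would construct the unique solution via the method of characteristics. Since $u^n\in\mathcal C([0,T];H^2)\cap L^2(0,T;H^3)$ and $H^3(\T^3)\hookrightarrow W^{1,\infty}(\T^3)$ in dimension three, $\nabla u^n\in L^1(0,T;L^\infty(\T^3))$, so the associated forward flow $\phi_t$ on $\T^3$ is a well-defined $C^1$-diffeomorphism. Combined with $\nabla_x\cdot u^n=0$, this makes $\phi_t$ volume preserving. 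Then $\rho^{n+1}(x,t):=\rho_0(\phi_t^{-1}(x))$ is the unique strong solution, and since $\phi_t^{-1}:\T^3\to\T^3$ is a bijection,
\[
\inf_{(x,t)\in\T^3\times[0,T]}\rho^{n+1}(x,t)=\inf_{x\in\T^3}\rho_0(x)\geq\delta,
\]
which proves (i).

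For (ii) I would use standard energy estimates. Testing against $\rho^{n+1}$ and using $\nabla\cdot u^n=0$ gives $\|\rho^{n+1}(t)\|_{L^2}=\|\rho_0\|_{L^2}$. For $1\leq k\leq 3$, applying $\nabla^k$ produces
\[
\partial_t\nabla^k\rho^{n+1}+u^n\cdot\nabla\nabla^k\rho^{n+1}=-[\nabla^k,u^n\cdot\nabla]\rho^{n+1}.
\]
Testing against $\nabla^k\rho^{n+1}$, applying the standard Moser/Kato--Ponce commutator estimate
\[
\|[\nabla^k,u^n\cdot\nabla]\rho^{n+1}\|_{L^2}\lesssim \|\nabla u^n\|_{L^\infty}\|\nabla^k\rho^{n+1}\|_{L^2}+\|\nabla^k u^n\|_{L^2}\|\nabla\rho^{n+1}\|_{L^\infty},
\]
and invoking the Sobolev embedding $H^2(\T^3)\hookrightarrow L^\infty(\T^3)$ to control both $\|\nabla u^n\|_{L^\infty}$ and $\|\nabla\rho^{n+1}\|_{L^\infty}$, I obtain
\[
\frac{d}{dt}\|\rho^{n+1}(t)\|_{H^3}^2 \lesssim \|u^n(t)\|_{H^3}\,\|\rho^{n+1}(t)\|_{H^3}^2.
\]
Gronwall combined with Cauchy--Schwarz in time then yields
\[
\|\rho^{n+1}(t)\|_{H^3}\leq \|\rho_0\|_{H^3}\exp\left(C\sqrt{T}\,\|u^n\|_{L^2(0,T;H^3)}\right)\leq \|\rho_0\|_{H^3}\exp(C\sqrt{T}\,\varepsilon^\alpha),
\]
with $C$ independent of $n$, proving (ii). The time continuity $\rho^{n+1}\in\mathcal C([0,T];H^3)$ then follows either directly from the characteristic formula together with continuity of $\phi_t$ in $t$, or from the uniform $H^3$ bound via a standard weak-to-strong continuity argument using the transport equation itself.

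The argument contains no serious obstacle. The only mild subtlety is that \eqref{n_condition} furnishes $u^n\in L^2(0,T;H^3)$ rather than $L^1(0,T;H^3)$, but this is absorbed cleanly by Cauchy--Schwarz inside the Gronwall loop. Uniformity in $n$ is preserved since both the bound on $\|u^n\|_{L^2(0,T;H^3)}$ and the constant $\delta$ depend only on the initial data and $T$.
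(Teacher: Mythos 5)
Your proof is correct and follows exactly the standard route that the paper itself invokes (it omits the proof, deferring to Lemma 2.2 of \cite{CK}): characteristics of the divergence-free field $u^n$ for the lower bound, and energy estimates with the Moser/Kato--Ponce commutator bound plus Gr\"onwall for the $H^3$ estimate. You also correctly handle the only delicate points, namely that $\nabla u^n\in L^1(0,T;L^\infty)$ only through $u^n\in L^2(0,T;H^3)$ and that the resulting constant depends only on $T$ and $\varepsilon^\alpha$, hence is uniform in $n$.
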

Next, we present the growth estimate in velocity for the characteristic flow \eqref{back}.
\begin{lemma} (Estimate of characteristic flow) \label{lem_flow}
Suppose that $u^n$ satisfy \eqref{n_condition}. Then, there exists a constant $C$ depending on  $T$ such that
\begin{eqnarray*}
|V^{n+1}(s)| \leq C(1+|v|), \quad 0 \leq s \leq T.
\end{eqnarray*}
\end{lemma}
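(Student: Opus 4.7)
The key observation is that, viewing $X^{n+1}$ as a given curve, the velocity characteristic $V^{n+1}$ obeys the linear inhomogeneous ODE
\[
\frac{d}{ds}V^{n+1}(s) + V^{n+1}(s) = u^n(X^{n+1}(s),s), \qquad V^{n+1}(t)=v.
\]
Multiplying by the integrating factor $e^{s}$ and integrating from $t$ to $s$ gives the explicit formula
\[
V^{n+1}(s) = e^{t-s}\,v + e^{-s}\int_{t}^{s} e^{\tau}\, u^n(X^{n+1}(\tau),\tau)\,d\tau.
\]
The plan is simply to take absolute values in this identity and use an $L^\infty_{t,x}$ bound on $u^n$ coming from the assumption \eqref{n_condition}.

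The crucial ingredient for the second term is the Sobolev embedding $H^2(\T^3)\hookrightarrow L^\infty(\T^3)$ (valid since $2>3/2$), which together with \eqref{n_condition} gives
\[
\sup_{0\leq \tau \leq T}\|u^n(\cdot,\tau)\|_{L^\infty(\T^3)} \;\leq\; C\,\|u^n\|_{\mathcal C([0,T];H^2)} \;\leq\; C\varepsilon^\alpha.
\]
For the first term, $|e^{t-s}v|\leq e^{T}|v|$ for all $s,t\in[0,T]$. For the integral term, one estimates $|e^{-s}\int_t^s e^\tau d\tau|\leq e^{T}$ regardless of whether $s\gtrless t$, so the contribution is bounded by $C e^{T}\varepsilon^{\alpha}$. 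Adding these yields
\[
|V^{n+1}(s)| \;\leq\; e^{T}|v| + Ce^{T}\varepsilon^{\alpha} \;\leq\; C(1+|v|),
\]
where the constant depends only on $T$ (absorbing the fixed quantity $\varepsilon^{\alpha}$).

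No serious obstacle is expected: the only issue is that the flow is defined backward from the terminal time $t$, so one must be careful with the sign of $s-t$, but this is handled uniformly by the integrating-factor formula. An alternative route is to apply Gr\"onwall's inequality to $|V^{n+1}(s)|$ directly along the flow segments $s\in[0,t]$ and $s\in[t,T]$, again using $H^2\hookrightarrow L^\infty$ as the sole external input; this gives the same conclusion without writing the explicit solution.
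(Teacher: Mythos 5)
Your proposal is correct and follows essentially the same route as the paper: the integrating-factor formula you write is exactly the paper's explicit representation $V^{n+1}(s)=ve^{t-s}-\int_s^t u^n(X^{n+1}(\tau),\tau)e^{\tau-s}\,d\tau$ (up to reversing the limits of integration), and the paper likewise closes the estimate via $\|u^n\|_{L^\infty}\leq C\|u^n\|_{H^2}<\varepsilon^\alpha$ from the Sobolev embedding. No further comment is needed.
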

\begin{proof}
We rewrite \eqref{back} as
\begin{align}
& X^{n+1}(s) = x - \int_s^t V^{n+1}(\tau)\,d\tau,\notag\\
& V^{n+1}(s) = ve^{t-s} - \int_s^t u^n(X^{n+1}(\tau),\tau)e^{  \tau-s  }\,d\tau. \label{B-1-2}
\end{align}
Note that $u^n$ can be estimated as $ \| u^n\|_{L^\infty} \leq C\|u^n \|_{H^2} < \varepsilon^\alpha,$
where $C>0$ is independent of $n$. Then we easily find from \eqref{B-1-2} that
\[
|V^{n+1}(s)| \leq C(1+|v|),
\]
where $C$ depends on $T$, but independent of $n$.
\end{proof}
The next lemma asserts that the macroscopic fields of kinetic equation have the uniform boundedness property.

\begin{lemma}(Boundedness of macroscopic fields) \label{lem_macro_bdd}
Suppose that the initial data satisfy the assumptions of Theorem \ref{thm_main} and $f^k (1 \leq k \leq n)$  satisfies
 \begin{equation*}
\begin{aligned}
&\sum_{|\nu|\leq 1}\|\nabla^\nu f^{k}\|_q < \varepsilon^\beta \quad \mbox{for some} \quad \beta \in (0,1).
\end{aligned}
\end{equation*}
Then, we have
\begin{itemize}
 \item[(i)] $\rho_{f^n}, T_{f^n} > C_1,$
 \item[(ii)] $\rho_{f^n} + |U_{f^n}| + T_{f^n} < C_2,$
\end{itemize}
for some generic positive constants $C_1$ and $C_2$.
\end{lemma}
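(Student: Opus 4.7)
My plan is to derive the upper bounds directly from elementary moment estimates using \eqref{n_condition}, and to obtain the lower bounds by propagating a pointwise tail lower bound on $f^n$ along the backward characteristic \eqref{back}, starting from the pointwise lower bound on $f_0$ in assumption (iii) of Theorem \ref{thm_main}. For the upper bounds, since $q > 5$,
\[
\rho_{f^n}(x,t) = \int_{\R^3} f^n\, dv \le \|f^n\|_q \int_{\R^3}(1+|v|)^{-q}\, dv \lesssim \varepsilon^\beta,
\]
and analogous estimates give $\int|v|^k f^n\, dv \lesssim \varepsilon^\beta$ for $k=1,2$, which together imply $\rho_{f^n}|U_{f^n}| \lesssim \varepsilon^\beta$ and $\rho_{f^n}(|U_{f^n}|^2 + 3T_{f^n}) \lesssim \varepsilon^\beta$. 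Once a positive lower bound of $\rho_{f^n}$ is in hand, dividing by it will deliver the upper bounds on $|U_{f^n}|$ and $T_{f^n}$.

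For the pointwise lower bound on $f^n$, I would use $\nabla_v\cdot(u^{n-1}-v) = -3$ to rewrite the first equation of \eqref{app_main} as
\[
\pa_t f^n + v\cdot\nabla_x f^n + (u^{n-1}-v)\cdot\nabla_v f^n = \rho_{f^{n-1}}\mathcal M(f^{n-1}) - \bigl(\rho_{f^{n-1}}-3\bigr)f^n,
\]
and read it along the backward characteristic $Z^n$ from \eqref{back}. This reduces the PDE to a scalar linear ODE in $s$ whose solution, keeping only the (nonnegative) initial contribution, gives
\[
f^n(x,v,t) \ge f_0\bigl(Z^n(0)\bigr)\exp\!\Bigl(-\int_0^t \bigl(\rho_{f^{n-1}}-3\bigr)\bigl(X^n(\tau),\tau\bigr)d\tau\Bigr).
\]
The source term $\rho_{f^{n-1}}\mathcal M(f^{n-1}) \ge 0$ because $f^{n-1}\ge 0$, which itself propagates inductively from $f_0 \ge 0$. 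With the already-established $\rho_{f^{n-1}} \le C$, the exponential is bounded below by $e^{-CT}$; Lemma \ref{lem_flow} supplies $|V^n(0)| \le C(1+|v|)$; and assumption (iii) of Theorem \ref{thm_main} then yields $f_0(Z^n(0)) > \varepsilon_1(1+|V^n(0)|)^{-(q+3+a)}$. Combining these,
\[
f^n(x,v,t) \ge c_T\,(1+|v|)^{-(q+3+a)}, \qquad c_T > 0 \text{ independent of } n.
\]

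Integrating in $v$ gives $\rho_{f^n} \ge C_1$, which plugged back into the upper-bound computation yields $|U_{f^n}| \le C$. Picking $R \ge 2\sup_n|U_{f^n}|$, the inequality $|v-U_{f^n}| \ge |v|/2$ on $\{|v|\ge R\}$ gives
\[
3\rho_{f^n}T_{f^n} = \int_{\R^3}|v-U_{f^n}|^2 f^n\,dv \ge \frac{c_T}{4}\int_{|v|\ge R}|v|^2(1+|v|)^{-(q+3+a)}\,dv > 0,
\]
and dividing by the upper bound of $\rho_{f^n}$ delivers the lower bound on $T_{f^n}$. The main obstacle is confirming that $c_T$, and hence $C_1$, are truly independent of $n$: this is secured because both the integrating factor and the constant in Lemma \ref{lem_flow} depend only on $\|f^{n-1}\|_q \le \varepsilon^\beta$ and $\|u^{n-1}\|_{H^2} \le \varepsilon^\alpha$ from hypothesis \eqref{n_condition}, together with $T$.
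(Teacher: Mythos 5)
Your proposal follows essentially the same route as the paper: the upper bounds come from the weighted moment estimates, and the lower bound on $\rho_{f^n}$ comes from the Duhamel formula along the backward characteristics, dropping the nonnegative gain term and combining the initial positivity assumption (iii) with the flow estimate of Lemma \ref{lem_flow}. The only point of divergence is the lower bound on $T_{f^n}$: you compute the second moment directly from the propagated pointwise lower bound $f^n \gtrsim (1+|v|)^{-(q+3+a)}$, whereas the paper simply invokes Lemma \ref{lem_temp}(i), i.e.\ $\rho_f \le C_q\|f\|_q T_f^{3/2}$, together with the lower bound on $\rho_{f^n}$ and the upper bound on $\|f^n\|_q$; both arguments are correct, yours being self-contained at the cost of the (automatically satisfied) integrability check $q+a>2$.
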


\begin{proof}
Along the backward characteristic defined in \eqref{back}, we find
\begin{align*}\begin{aligned}
&\frac{d}{ds}f^{n+1}(Z^{n+1}(s),s) \\
&\quad= \pa_s f^{n+1}(Z^{n+1}(s),s) + V^{n+1}(s) \cdot \nabla_x f^{n+1}(Z^{n+1}(s),s) \cr
&\qquad + (u^n(X^{n+1}(s),s)-V^{n+1}(s))\cdot \nabla_v f^{n+1}(Z^{n+1}(s),s) \\
&\quad = \rho_{f^n}(X^{n+1}(s),s)\mathcal M(f^n)(Z^{n+1}(s),s) + (3-\rho_{f^n}(X^{n+1}(s),s))f^{n+1}(Z^{n+1}(s),s).
\end{aligned}\end{align*}
We integrate both sides with respect to time to get
\begin{align}\begin{aligned} \label{f_back}
f^{n+1}(z,t) &= e^{\int_0^t(3-\rho_{f^n}(X^{n+1}(s),s))\, ds}f_0(Z^{n+1}(0)) \\
&\quad+ \int_0^t e^{\int_s^t(3-\rho_{f^n}(X^{n+1}(\tau),\tau))\, d\tau}\rho_{f^n}(X^{n+1}(s),s) \mathcal M(f^n)(Z^{n+1}(s),s)\,ds.
\end{aligned}\end{align}
First, it is easy to see that
\[
\rho_{f^k} \leq \int_{\R^3} |f^k|(1+|v|)^{q}(1+|v|)^{-q}\,dv \leq C \|f^k\|_q < C\varepsilon^\beta, \quad 1 \leq k \leq n.
\]
We also have
\begin{align}\label{est_low}
\begin{aligned}
\rho_{f^n} &= \int_{\R^3} f^n \,dv \cr
&\geq \int_{\R^3} e^{\int_0^t (3 - \rho_{f^{n-1}}(X^{n}(s),s))ds}f_0(Z^n(0)) \,dv \\
& \geq  e^{(3-C\varepsilon^\beta)t} \int_{\R^3}f_0(Z^n(0))\,dv\\
 &\geq e^{(3-C\varepsilon^\beta)t}  \varepsilon_1 \int_{\R^3}(1+|V^n(0)|)^{-(q+3+a)}dv \\
&\geq   \varepsilon_1 \int_{\R^3}(1+C(1+|v|))^{{-(q+3+a)}}dv\\
&= C >0,
\end{aligned}
\end{align}
where Lemma \ref{lem_flow} and the assumption on the initial data $f_0$ are used. For the estimate of $U_{f^n}$, we use the lower bound estimate for $\rho_{f^n}$ above to get
\begin{align*}
|U_{f^n}| &\leq \frac{1}{C_{T}} \int_{\R^3}vf^n \,dv \leq \frac{1}{C_{T}} \int_{\R^3}f^n (1+|v|)^{q} (1+|v|)^{1-q}\,dv \leq C_{T}\|f^n\|_q < C_{T}\varepsilon^\beta.
\end{align*}
The upper bound estimate of $T_{f^n}$ can be achieved in a similar way using the estimates above and the lower bound directly follows from Lemma \ref{lem_temp} (i) with \eqref{est_low}.
\end{proof}
In what follows, we show the uniform-in-$n$ boundedness of $f^n$.
\begin{lemma} (Uniform-in-$n$ boundedness of $f^n$) \label{lem_kinet}
Suppose that the initial data $(f_0,\rho_0,u_0)$ satisfies the assumptions of Theorem \ref{thm_main} and $u^n$ satisfies \eqref{n_condition}. Then, there exists a unique solution $f^{n+1}$ to system \eqref{app_main} such that
\begin{equation*}
\sum_{|\nu|\leq 1}\|\nabla^\nu f^{n+1}\|_q < \varepsilon^\beta \quad \mbox{for some} \quad \beta \in (0,1).
\end{equation*}
\end{lemma}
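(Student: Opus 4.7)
My plan is to use the mild representation \eqref{f_back} together with analogous formulas for $\nabla_x f^{n+1}$ and $\nabla_v f^{n+1}$ obtained by differentiating \eqref{app_main}. Existence and uniqueness of $f^{n+1}$ itself are immediate: with $u^n$, $\rho_{f^n}$, and $\mathcal{M}(f^n)$ prescribed, the first equation of \eqref{app_main} is a linear transport-relaxation equation solvable along the characteristic flow \eqref{back}.

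First I estimate $\|f^{n+1}\|_q$ by multiplying \eqref{f_back} by $(1+|v|)^q$ and taking the supremum. Three ingredients close this bound: (i) the exponential factor $e^{\int_s^t(3-\rho_{f^n})\,d\tau}$ is bounded by $e^{3T}$ since $\rho_{f^n}\geq 0$; (ii) from the integrated form \eqref{B-1-2} and $\|u^n\|_{L^\infty}\lesssim\varepsilon^\alpha$, the ratio $(1+|v|)/(1+|V^{n+1}(s)|)$ is bounded by a constant $C_T$ independent of $\varepsilon$, which lets me transfer the weight $(1+|v|)^q$ onto $(1+|V^{n+1}(s)|)^q$ and invoke Lemma \ref{lem_M_bdd} to get $(1+|v|)^q \mathcal{M}(f^n)(Z^{n+1}(s),s) \lesssim \|f^n\|_q$; (iii) the small factor $\rho_{f^n} \lesssim \|f^n\|_q < \varepsilon^\beta$ (as used in Lemma \ref{lem_macro_bdd}) makes the whole integrand of order $\varepsilon^{2\beta}$. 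Combining these gives
\[
\|f^{n+1}(\cdot,\cdot,t)\|_q \leq C_T\varepsilon + C_T\varepsilon^{2\beta},
\]
which is strictly less than $\varepsilon^\beta/3$ once $\varepsilon^{1-\beta}C_T$ is sufficiently small.

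Next I differentiate the PDE in $x$ and $v$, producing along-characteristic ODEs for $\nabla_x f^{n+1}$ and $\nabla_v f^{n+1}$. The new source terms are $-(\nabla_x u^n)\cdot\nabla_v f^{n+1}$, $\nabla_x\rho_{f^n}(\mathcal{M}(f^n)-f^{n+1})$, $\rho_{f^n}\nabla_{x,v}\mathcal{M}(f^n)$, and (crucially, in the $\nabla_v$ equation) a bare term $-\nabla_x f^{n+1}$ arising from commuting $\nabla_v$ with $v\cdot\nabla_x$. Each of the first three is of size $\varepsilon^{\alpha+\beta}$ or $\varepsilon^{2\beta}$: indeed $\|\nabla_x u^n\|_{L^\infty}\lesssim\|u^n\|_{H^2}<\varepsilon^\alpha$ by Sobolev embedding, $|\nabla_x\rho_{f^n}|\lesssim\|\nabla_x f^n\|_q<\varepsilon^\beta$, and Lemma \ref{lem_macro} gives $\|\nabla_{x,v}\mathcal{M}(f^n)\|_q\lesssim(\|\nabla_x f^n\|_q+1)\|f^n\|_q\lesssim\varepsilon^\beta$. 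Summing the two integrated inequalities and applying Gr\"onwall yields
\[
\|\nabla_x f^{n+1}(t)\|_q + \|\nabla_v f^{n+1}(t)\|_q \leq C_T'(\varepsilon+\varepsilon^{2\beta}),
\]
again $<\varepsilon^\beta/3$ for $\varepsilon$ small. Adding the three pieces delivers $\sum_{|\nu|\leq 1}\|\nabla^\nu f^{n+1}\|_q<\varepsilon^\beta$.

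The hard part is the undamped coupling $-\nabla_x f^{n+1}$ in the characteristic equation for $\nabla_v f^{n+1}$: with no small prefactor, neither derivative bound closes in isolation. Gr\"onwall on the sum resolves this, but inflates the constant by a factor $e^{C_T T}$ that must be absorbed into the $C_T$ governing the proposition's smallness condition $\varepsilon^{1-\beta}C_T<1$. The gap between the initial smallness $\|f_0\|_{W^{1,\infty}_q}<\varepsilon$ and the target $\varepsilon^\beta$ (strictly larger since $\beta<1$) is precisely what accommodates these $T$-dependent amplifications.
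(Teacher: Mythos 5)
Your proposal follows essentially the same route as the paper: the Duhamel formula \eqref{f_back} along the characteristics \eqref{back}, the weight transfer $(1+|v|)\lesssim 1+|V^{n+1}(s)|$, Lemmas \ref{lem_M_bdd} and \ref{lem_macro} for the Maxwellian terms, and a Gr\"onwall argument on the coupled system for $\nabla_x f^{n+1}$ and $\nabla_v f^{n+1}$, closed by the gap between $\varepsilon$ and $\varepsilon^\beta$. The only inaccuracy is your claim that $\|\nabla_x u^n\|_{L^\infty}\lesssim\|u^n\|_{H^2}$ by Sobolev embedding (false in dimension $3$); the term $\pa_{x_j}u^n\cdot\nabla_v f^{n+1}$ should instead be handled inside Gr\"onwall using $\int_0^T\|\nabla u^n\|_{L^\infty}\,ds\lesssim\sqrt{T}\,\|u^n\|_{L^2(0,T;H^3)}<\sqrt{T}\,\varepsilon^\alpha$ from \eqref{n_condition}, which is what the paper implicitly does.
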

\begin{proof}
\noindent $\bullet$ (Preparatory estimates): Using the upper bound of $\|u^n\|_{L^\infty}$, we obtain from \eqref{B-1-2} that
\[
|V^{n+1}(t)| \geq C_1|v|  - C_2, \quad 0 < C_1 < 1,
\]
for all $0 \leq t \leq T$. It readily gives
\begin{align} \label{B-3}
1 + C_2 + |V^{n+1}(t)| \geq 1+C_1|v| \geq C_1(1+|v|),
\end{align}
for all $0 \leq t \leq T$.
We use the estimate above to find
$$\begin{aligned}
f_0(Z^{n+1}(0)) &= f_0(Z^{n+1}(0))(1 + C_2 + |V^{n+1}(0)|)^q (1 + C_2 + |V^{n+1}(0)|)^{-q} \\
&\leq C f_0(Z^{n+1}(0))(C_2^q + (1+|V^{n+1}(0)|)^q )(1 + C_2 + |V^{n+1}(0)|)^{-q}\cr
&\leq C\|f_0\|_q(1 + C_2 + |V^{n+1}(0)|)^{-q}.
\end{aligned}$$
This together with \eqref{B-3} gives
\begin{align*}
\begin{aligned}
|f_0(Z^{n+1}(0))| \leq  C\|f_0\|_{q} (1+|v|)^{-q},
\end{aligned}
\end{align*}
Similarly, we find
\begin{align} \label{B-7}
\begin{aligned}
& |f^{n+1}(Z^{n+1}(s),s)| \leq C \|f^{n+1}(\cdot,\cdot,s)\|_q (1+|v|)^{-q},\\
& |\nabla_{x,v}f^{n+1}(Z^{n+1}(s),s)| \leq C \| \nabla_{x,v}f^{n+1}(\cdot,\cdot,s)\|_q (1+|v|)^{-q},\\
 &| \nabla_x f_0(Z^{n+1}(0))| \leq C \| \nabla_x f_0\|_q (1+|v|)^{-q},\\
 &| \nabla_v f_0(Z^{n+1}(0))| \leq C \| \nabla_v f_0\|_q (1+|v|)^{-q},\\
 &|\mathcal M(f^n)(Z^{n+1}(s),s)| \leq C\|\mathcal M(f^n)\|_q (1+|v|)^{-q},\\
 & | \nabla_{x,v} \mathcal M(f^n)(Z^{n+1}(s),s)| \leq C \| \nabla_{x,v}\mathcal M(f^n)\|_q (1+|v|)^{-q}.\\
\end{aligned}
\end{align}

\noindent $\bullet$ (Zeroth order estimate): In view of the boundedness of $\rho^{n+1}$ and $\rho_{f^n}$, we get from \eqref{f_back} and the estimates above combined with Lemma \ref{lem_M_bdd} that
\begin{align*}
|f^{n+1}(z,t)| &\leq C |f_0(Z^{n+1}(0))| + C\varepsilon^{\beta}\int_0^t |\mathcal M(f^n)(Z^{n+1}(s),s)|\,ds \\
&\leq C \|f_0\|_{q} (1+|v|)^{-q}+ C \varepsilon^{\beta}\int_0^t \| \mathcal M(f^n)\|_q\,ds \cdot (1+|v|)^{-q} \\
&\leq C  \|f_0\|_{q} (1+|v|)^{-q}+ C \varepsilon^{\beta}\int_0^t \|f^n\|_q \,ds \cdot (1+|v|)^{-q}.
\end{align*}
This readily gives
\begin{equation} \label{C-2}
\sup_{0 \leq t \leq T}\|f^n(\cdot,\cdot,t)\|_q \leq Ce^{C\varepsilon^{\beta}} \|f_0\|_{q},
\end{equation}
for some $C>0$ independent of $n$.
\vspace{0.2cm}

\noindent $\bullet$ (First order estimate): For $j=1,2,3$, we take a partial derivative $\pa_{x_j}$ to the following equation:
\begin{equation} \label{C-3}
\nabla_v \cdot ((u^n-v)f^{n+1}) = (u^n-v) \cdot \nabla_v f^{n+1} - 3f^{n+1},
\end{equation}
then we have
\begin{equation*}
\pa_{x_j}(\nabla_v \cdot ((u^n-v)f^{n+1}))  = \pa_{x_j}u^n\cdot \nabla_v f^{n+1} + (u^n-v)\cdot \nabla_v \pa_{x_j}f^{n+1} -3\pa_{x_j}f^{n+1}.
\end{equation*}
We now take the differential operator $\pa_{x_j}$ to the kinetic equation in \eqref{app_main} and use the estimate above to find
\begin{align*}
\begin{aligned}
&\pa_t \pa_{x_j}f^{n+1} + v \cdot \nabla_x \pa_{x_j}f^{n+1} + (u^n-v) \cdot \nabla_v \pa_{x_j}f^{n+1}\\
&\qquad = (\mathcal M(f^n)-f^{n+1}) \pa_{x_j}\rho_{f^n} + \rho_{f^n}(\pa_{x_j}\mathcal M(f^n) - \pa_{x_j}f^{n+1})  + 3\pa_{x_j}f^{n+1} -  \pa_{x_j}u^n \cdot \nabla_v f^{n+1}.
\end{aligned}
\end{align*}
Then along the characteristic curve $Z^{n+1}(s)$ given in \eqref{back}, we have
\begin{align*}
\frac{d}{dt}\pa_{x_j}f^{n+1}(Z^{n+1}(t),t) &= (3-\rho_{f^n})\pa_{x_j}f^{n+1} + \pa_{x_j}\rho_{f^n}(\mathcal M(f^n)-f^{n+1}) + \rho_{f^n}\pa_{x_j}\mathcal M(f^n) - \pa_{x_j}u^n \cdot \nabla_v f^{n+1}.
\end{align*}
Here and the estimate below, for simplicity we omit the dependence of terms in the right hand sides on $Z^{n+1}(t)$. Then we easily find
\begin{align*}
\begin{aligned}
&\pa_{x_j}f^{n+1}(Z^{n+1}(t),t) \\
&\quad = \pa_{x_j}f_0(Z^{n+1}(0))e^{\int_0^t (3-\rho_{f^n})(X^{n+1}(s),s)\,ds}\\
&\qquad  + \int_0^t \lt( \pa_{x_j}\rho_{f^n}(\mathcal M(f^n)-f^{n+1}) + \rho_{f^n}\pa_{x_j}\mathcal M(f^n) - \pa_{x_j}u^n \cdot \nabla_v f^{n+1}\rt)(Z^{n+1}(s),s) e^{\int_s^t (3-\rho_{f^n})(X^{n+1}(\tau),\tau)\,d\tau}\,ds,
\end{aligned}
\end{align*}
which readily gives
\begin{align*}
\begin{aligned}
|\nabla_x f^{n+1}(z,t)| &\leq C| \pa_{x_j}f_0(Z^{n+1}(0))| \cr
&\quad + C\int_0^t \lt| \lt(\pa_{x_j}\rho_{f^n}(\mathcal M(f^n)-f^{n+1}) + \rho_{f^n}\pa_{x_j}\mathcal M(f^n) - \pa_{x_j}u^n \cdot \nabla_v f^{n+1}\rt)(Z^{n+1}(s),s)\rt| ds.
\end{aligned}
\end{align*}
The terms on the right hand side can be estimated as follows. The estimate of the first term is provided in \eqref{B-7}. We estimate the integrand terms as follows. Using \eqref{A-4-1} and \eqref{B-7}, we have
\begin{equation*}
\lt| \pa_{x_j}\rho_{f^n}(\mathcal M(f^n)-f^{n+1})  \rt| \leq C\|\nabla_x f^n\|_q (\| \mathcal M(f^n)\|_q + \|f^{n+1}\|_q)(1+|v|)^{-q}.
\end{equation*}
Similarly,
\begin{align*}
\lt| \rho_{f^n} \pa_{x_j}\mathcal M(f^n)\rt| \leq C\varepsilon^{\beta}\| \nabla_{x,v}\mathcal M(f^n)\|_q(1+|v|)^{-q}, \quad  \lt| \pa_{x_j}u^n \cdot \nabla_v f^{n+1}\rt| \leq C \| \nabla_v f^{n+1}\|_q (1+|v|)^{-q}.
\end{align*}
Thus, we find
\begin{align} \label{B-10}
\begin{aligned}
\| \nabla_x f^{n+1}(z,t)\|_q &\leq C\| \nabla_x f_0\|_q + C\int_0^t \left( \|\mathcal M(f^n)\|_q + \|f^{n+1}\|_q + \| \nabla_{x,v}\mathcal M(f^n)\|_q + \| \nabla_v f^{n+1}\|_q \right)ds \\
&\leq C\| \nabla_x f_0\|_q + C\int_0^t  \left(\|f^n\|_q + \|f^{n+1}\|_q + \| \nabla_v f^{n+1}\|_q \right)ds \\
&\leq C_T(\|f_0\|_q + \| \nabla_x f_0\|_q) + C\int_0^t  \| \nabla_v f^{n+1}\|_q \,ds.
\end{aligned}
\end{align}
Here, Lemmas \ref{lem_M_bdd} and \ref{lem_macro} together with \eqref{C-2} are used.

We now perform the estimates for $\| \nabla_v f^{n+1}\|_q $ in much the same way as for $\| \nabla_x f^{n+1}\|_q$. We take $\pa_{v_j}, j=1,2,3$ to \eqref{C-3} to have
\begin{equation*}
\begin{aligned}
&\pa_{v_j}(\nabla_v \cdot ((u^n-v)f^{n+1})) = -4\pa_{v_j}f^{n+1} + (u^n-v)\cdot \nabla_v \pa_{v_j}f^{n+1}.
\end{aligned}
\end{equation*}
Then, taking $\pa_{v_j}$ to the kinetic equation in \eqref{app_main} yields
\begin{align*}
\pa_t \pa_{v_j}f^{n+1} + v \cdot \nabla_x \pa_{v_j}f^{n+1} + (u^n-v) \cdot \nabla_v \pa_{v_j}f^{n+1}   =\rho_{f^n}(\pa_{v_j} \mathcal M(f^n) - \pa_{v_j}f^{n+1})-\pa_{x_j} f^{n+1}+ 4\pa_{v_j}f^{n+1}.
\end{align*}
Along the characteristic flow \eqref{back}, it can be rewritten as
\begin{align*}
\frac{d}{dt}\pa_{v_j}f^{n+1}(Z^{n+1}(t),t) = (4-\rho_{f^n}) \pa_{v_j}f^{n+1} + \rho_{f^n}\pa_{v_j}\mathcal M(f^n) - \pa_{x_j}f^{n+1},
\end{align*}
and this readily gives
\begin{align*}
&\pa_{v_j} f^{n+1}(Z^{n+1}(t),t) \\
&\quad = \pa_{v_j}f_0(Z^{n+1}(0))e^{\int_0^t (4-\rho_{f^n})(X^{n+1}(s),s) \,ds} \\
&\qquad + \int_0^t (\rho_{f^n} \pa_{v_j}\mathcal M(f^n) - \pa_{x_j}f^{n+1})(Z^{n+1}(s),s)e^{\int_s^t (4-\rho_{f^n})(X^{n+1}(\tau),\tau)\,d\tau}ds.
\end{align*}
We use the estimate similar to above to have
\begin{align*}
|\nabla_v f^{n+1}(z,t)| & \leq C| \nabla_v f_0(Z^{n+1}(0))| + C\int_0^t (|\nabla_v \mathcal M(f^n)| + |\nabla_x f^{n+1}|)\,ds\\
&\leq C \| \nabla_v f_0\|_q (1+|v|)^{-q} + C \int_0^t \lt( \|f^n\|_q + \| \nabla_x f^{n+1}\|_q\rt) \cdot (1+|v|)^{-q}\,ds,
\end{align*}
which easily gives
\begin{equation} \label{B-11}
\| \nabla_v f^{n+1}(z,t)\|_q  \leq C_T ( \|f_0\|_q + \| \nabla_v f_0\|_q) + C\int_0^t \|\nabla_x f^{n+1}\|_q \,ds.
\end{equation}
Combining \eqref{B-10} and \eqref{B-11} asserts
\[
\| \nabla_{x,v}f^{n+1}(\cdot,\cdot,t)\|_q \leq C_T(\|f_0\|_q + \|\nabla_{x,v}f_0\|_q) + C\int_0^t  \| \nabla_{x,v} f^{n+1}\|_q\,ds,
\]
and Gr\"onwall's lemma yields
\begin{align}\label{B-12}
\begin{aligned}
\| \nabla_{x,v} f^{n+1}(\cdot,\cdot,t)\|_q & \leq C_T(\|f_0\|_q + \|\nabla_{x,v}f_0\|_q).
\end{aligned}
\end{align}
Finally, we conclude from \eqref{C-2} and \eqref{B-12} that
\[
\sum_{|\nu|\leq 1}\|\nabla^\nu f^{n+1}\|_q \leq C(\| f_0\|_q + \| \nabla_{x,v}f_0\|_q) \leq C_T\e < \e^\beta,
\]
where we used our assumption on $\varepsilon$: $C_T \e^{1-\beta}<1$.
\end{proof}
The next lemma show the the uniform-in-$n$ boundedness of the velocity $u^n$. Since the proof is similar to that of \cite{CK}, we postpone it to Appendix \ref{App}.
\begin{lemma} \label{lem_fluid} (Uniform-in-$n$ boundedness of $u^n$)
Suppose that the initial data $(f_0,\rho_0,u_0)$ and $u^n$ satisfy the assumptions in Theorem \ref{thm_main} and \eqref{n_condition}, respectively. Then, there exists a unique solution $u^{n+1}$ to system \eqref{app_main} such that
\begin{equation*}
\|\pa_t u^{n+1}\|_{\mathcal C([0,T];L^2)} + \|\pa_t u^{n+1}\|_{L^2(0,T;H^1)} + \|u^{n+1}\|_{\mathcal C([0,T];H^2)} + \|u^{n+1}\|_{L^2(0,T;H^3)} < \varepsilon^\alpha.
\end{equation*}
\end{lemma}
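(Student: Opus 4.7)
I first observe that once $\rho^{n+1}$ and $f^{n+1}$ have been constructed via Lemmas \ref{lem_rho} and \ref{lem_kinet}, the momentum equation in \eqref{app_main} is a linear parabolic system for $u^{n+1}$ with the leading coefficient $\rho^{n+1}$ pointwise bounded above by $C\|\rho_0\|_{H^3}$ and below by $\delta>0$, with drift $u^n$ of class $\mc([0,T];H^2)$, and with source $-\int_{\R^3}(u^n-v)f^{n+1}\,dv = -\rho_{f^{n+1}}u^n + \int_{\R^3}v f^{n+1}\,dv$ that is regular in view of the bounds on the macroscopic fields in Lemma \ref{lem_macro_bdd}. Standard semigroup / Galerkin arguments in the style of \cite{Ka} therefore deliver a unique weak solution in $\mc([0,T];L^2)\cap L^2(0,T;H^1)$. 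The content of the lemma is thus the higher-order a priori bounds, which I would obtain in four pieces.

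\textbf{Energy and time-derivative estimates.} I would test the momentum equation successively with $u^{n+1}$, with $\pa_t u^{n+1}$, and then (after differentiating the equation in time) with $\pa_t u^{n+1}$ again. The first test, combined with the continuity equation for $\rho^{n+1}$ and $\nabla_x\cdot u^n=0$, puts the time derivative in divergence form and yields control of $\|u^{n+1}\|_{\mc([0,T];L^2)}+\|\nabla_x u^{n+1}\|_{L^2(L^2)}$ by the data together with the source; the source is of size $\ls \varepsilon^\beta$ because $\rho_{f^{n+1}}\ls \|f^{n+1}\|_q<\varepsilon^\beta$ and $|\int v f^{n+1}\,dv|\ls \|f^{n+1}\|_q$. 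The second test, using $\rho^{n+1}\geq \delta$, absorbs $\pa_t u^{n+1}$ on the left and promotes this to $L^\infty$-in-$t$ control of $\nabla_x u^{n+1}$ and $L^2$-in-$t$ control of $\sqrt{\rho^{n+1}}\pa_t u^{n+1}$. The third test handles $\|\pa_t u^{n+1}\|_{\mc([0,T];L^2)}+\|\pa_t u^{n+1}\|_{L^2(0,T;H^1)}$; here the time-differentiated equation generates a term $\pa_t \rho^{n+1}\,\pa_t u^{n+1}=-u^n\cdot\nabla_x\rho^{n+1}\,\pa_t u^{n+1}$ that is handled by $\|\nabla_x\rho^{n+1}\|_{L^\infty}\ls \|\rho_0\|_{H^3}$ and $\|u^n\|_{L^\infty}<\varepsilon^\alpha$. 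The initial value $\pa_t u^{n+1}(\cdot,0)$ is read off from the equation at $t=0$ and lies in $L^2$ since $u_0\in H^2$, $\rho_0\in H^3$, and the drag source at $t=0$ is bounded.

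\textbf{Stokes elliptic regularity.} Rewriting the momentum equation pointwise in $t$ as the stationary Stokes problem
\[
-\Delta_x u^{n+1}+\nabla_x p^{n+1} = -\rho^{n+1}\pa_t u^{n+1}-\rho^{n+1}u^n\cdot\nabla_x u^{n+1}-\int_{\R^3}(u^n-v)f^{n+1}\,dv,\quad \nabla_x\cdot u^{n+1}=0,
\]
I would invoke classical $H^{k+2}$-regularity for Stokes in $\T^3$ with $k=0,1$. For $k=0$, the right-hand side is in $L^2$ uniformly in $t$ by the previous step, the bound on $\rho^{n+1}$, the smallness of $u^n$, and the drag estimate, giving $\|u^{n+1}\|_{\mc([0,T];H^2)}$. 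For $k=1$, the right-hand side must be bounded in $L^2(0,T;H^1)$: the term $\rho^{n+1}\pa_t u^{n+1}$ uses the $L^2(H^1)$ bound on $\pa_t u^{n+1}$ from Step 2 together with $\rho^{n+1}\in \mc([0,T];H^3)$, while the convective term $\rho^{n+1}u^n\cdot\nabla_x u^{n+1}$ and the drag term are handled via the algebra property $H^2(\T^3)\hookrightarrow L^\infty$ and the smallness of $\|u^n\|_{L^\infty(H^2)}$ and $\|f^{n+1}\|_q$. This yields $\|u^{n+1}\|_{L^2(0,T;H^3)}$.

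\textbf{Closing and main obstacle.} Aggregating the four bounds, the full left-hand side of the target inequality is controlled by a $T$-dependent constant multiplied by $\|u_0\|_{H^2}+\varepsilon^\beta\ls \varepsilon+\varepsilon^\beta$. Since $\alpha<\beta<1$, choosing $\varepsilon$ sufficiently small (in terms of $T$ only) gives the strict inequality $<\varepsilon^\alpha$, closing the induction. The principal difficulty lies in the $L^2(0,T;H^3)$ estimate: differentiating the convective term $\rho^{n+1}u^n\cdot\nabla_x u^{n+1}$ in $x$ produces a contribution $\rho^{n+1}u^n\cdot\nabla_x(\nabla_x u^{n+1})$, whose $L^2_tL^2_x$-norm is bounded by $\|u^n\|_{L^\infty_{t,x}}\|u^{n+1}\|_{L^2_tH^3_x}$, i.e.\ the same quantity being estimated. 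Absorbing this term into the left-hand side is exactly where the smallness $\|u^n\|_{L^\infty(H^2)}<\varepsilon^\alpha$ and the embedding $H^2(\T^3)\hookrightarrow L^\infty(\T^3)$ are essential; this is what enforces the restriction on $\varepsilon$ encoded in the constant $C_T$ of Proposition \ref{prop}.
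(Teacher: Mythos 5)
Your overall strategy coincides with the paper's: existence from linear parabolic theory, then energy estimates obtained by testing with $u^{n+1}$, with $\pa_t u^{n+1}$, and with $\pa_t u^{n+1}$ after differentiating the equation in time, followed by stationary Stokes regularity at levels $k=0,1$ and an aggregation using $\alpha<\beta$. Two points, however, deserve correction.

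First, the step you gloss over is the one that actually requires an idea. When you differentiate the momentum equation in time to estimate $\|\pa_t u^{n+1}\|_{\mathcal C([0,T];L^2)}+\|\nabla\pa_t u^{n+1}\|_{L^2(0,T;L^2)}$, the drag force produces not only $\int_{\R^3} f^{n+1}\,\pa_t u^n\,dv$ (which is why $\|\pa_t u^n\|_{L^2(0,T;H^1)}$ must be part of the induction hypothesis \eqref{n_condition}) but also $\int_{\R^3} u^n\,\pa_t f^{n+1}\,dv$. This term cannot be estimated by the norms you list; one must substitute the kinetic equation for $\pa_t f^{n+1}$ and control $\int_{\R^3} v\cdot\nabla_x f^{n+1}\,dv$ and $\int_{\R^3}(u^n-v)\cdot\nabla_v f^{n+1}\,dv$ using the weighted first-order bounds $\|\nabla_{x,v}f^{n+1}\|_q<\varepsilon^\beta$ from Lemma \ref{lem_kinet}. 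This is the term $\mathcal J_7$ in the paper's proof and is the only place where the $W^{1,\infty}_q$ (as opposed to $L^\infty_q$) control of $f^{n+1}$ enters the fluid estimate; your sketch omits it entirely.

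Second, the "principal difficulty" you identify is not one. In the $k=1$ Stokes estimate, $\|\nabla^3 u^{n+1}\|_{L^2}$ is bounded by the $H^1$-norm of the right-hand side, and $\nabla(\rho^{n+1}u^n\cdot\nabla u^{n+1})$ contains at most $\rho^{n+1}u^n\cdot\nabla^2 u^{n+1}$ — a second, not third, derivative of $u^{n+1}$. Its $L^2_tL^2_x$-norm is $\lesssim\|u^n\|_{L^\infty_{t,x}}\|\nabla^2u^{n+1}\|_{L^2_tL^2_x}$, which is already available; the paper in fact first closes the $\mathcal C([0,T];L^2)$ bound on $\nabla^2 u^{n+1}$ (via the $k=0$ Stokes estimate combined with the $\mathcal C([0,T];L^2)$ bound on $\pa_t u^{n+1}$) and only then performs the $H^3$ estimate, so no absorption into the left-hand side is needed anywhere. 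Your proposed absorption would also work, thanks to the smallness of $\|u^n\|_{L^\infty}$, so this is not a fatal error, but it misplaces where the argument is delicate.
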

%
%
%
%
%

\section{Proof of Theorem \ref{thm_main}} \label{sec:4}
\setcounter{equation}{0}
In this section, we first prove that the approximation sequence $(f^n,\rho^n,u^n)$ is a Cauchy sequence. Subsequently, we show that the corresponding limit $(f,\rho,u)$ is the solution to the system \eqref{main}, and moreover it has the desired regularity \eqref{reg}.

\subsection{Construction of Cauchy sequence}
\begin{lemma} \label{lem_cauchy1}
Let $(f^{n},\rho^n,u^n)$ be the solution to system \eqref{app_main}. Then, the following estimate holds:
\begin{align} \label{cauchy-1}
\begin{aligned}
 &\|(f^{n+1}-f^n)(t)\|^2_{q} \leq C \int_0^t \|(f^n-f^{n-1})(s)\|^2_{q}\,ds + C\int_0^t \|(u^n-u^{n-1})(s)\|^2_{H^2}\,ds,
\end{aligned}
\end{align}
where $C>0$ is independent of $n$.
\end{lemma}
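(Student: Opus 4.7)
The natural strategy is the characteristic method used in Lemma~\ref{lem_kinet}, but applied to the difference $f^{n+1}-f^n$. The two functions solve transport-relaxation equations along \emph{different} characteristic flows ($Z^{n+1}$ driven by $u^n$ versus $Z^n$ driven by $u^{n-1}$), so the first task is to rewrite both equations along the \emph{same} flow $Z^{n+1}$. After expanding $\nabla_v\!\cdot\!((u^n-v)f^{n+1})$ as in \eqref{C-3}, the equation for $f^{n+1}$ along $Z^{n+1}$ reads
\[
\tfrac{d}{dt}f^{n+1}(Z^{n+1}(t),t)=\rho_{f^n}\mathcal M(f^n)+(3-\rho_{f^n})f^{n+1},
\]
while the equation for $f^n$, which has drift $(u^{n-1}-v)\cdot\nabla_v f^n$, generates an extra commutator when read along $Z^{n+1}$:
\[
\tfrac{d}{dt}f^{n}(Z^{n+1}(t),t)=\rho_{f^{n-1}}\mathcal M(f^{n-1})+(3-\rho_{f^{n-1}})f^{n}+(u^n-u^{n-1})\!\cdot\!\nabla_v f^n.
\]
Subtracting and reorganizing by adding and subtracting $\rho_{f^{n-1}}\mathcal M(f^n)$ and $(3-\rho_{f^n})f^n$, one obtains
\[
\tfrac{d}{dt}(f^{n+1}-f^n)(Z^{n+1}(t),t)=(3-\rho_{f^n})(f^{n+1}-f^n)+S,
\]
where the source is
\[
S=(\rho_{f^n}-\rho_{f^{n-1}})(\mathcal M(f^n)-f^n)+\rho_{f^{n-1}}(\mathcal M(f^n)-\mathcal M(f^{n-1}))-(u^n-u^{n-1})\!\cdot\!\nabla_v f^n.
\]

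Next, since $f^{n+1}$ and $f^n$ share the same initial datum $f_0$, the difference vanishes at $s=0$, so integrating the above ODE in time yields
\[
(f^{n+1}-f^n)(z,t)=\int_0^t e^{\int_s^t(3-\rho_{f^n})(X^{n+1}(\tau),\tau)\,d\tau}S(Z^{n+1}(s),s)\,ds.
\]
Because $\rho_{f^n}$ is uniformly bounded by Lemma~\ref{lem_macro_bdd}, the exponential factor is harmless. I multiply by $(1+|v|)^q$ and apply \eqref{B-3}, i.e.\ $(1+|v|)\lesssim 1+|V^{n+1}(s)|$, to convert weighted sup bounds at $Z^{n+1}(s)$ into the norms $\|\cdot\|_q$ at time $s$.

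I then bound each piece of $S$ separately. The first piece uses $\|\rho_{f^n}-\rho_{f^{n-1}}\|_{L^\infty}\lesssim\|f^n-f^{n-1}\|_q$ (integrate in $v$ with the weight, using $q>5$) combined with $\|\mathcal M(f^n)-f^n\|_q\lesssim\|f^n\|_q\lesssim\varepsilon^\beta$ from Lemma~\ref{lem_M_bdd} and the iteration bound \eqref{n_condition}. The second piece is controlled by $\rho_{f^{n-1}}\lesssim\varepsilon^\beta$ and the Lipschitz estimate $\|\mathcal M(f^n)-\mathcal M(f^{n-1})\|_q\lesssim\|f^n-f^{n-1}\|_q$ of Lemma~\ref{lem_lip}, whose hypotheses are supplied by Lemma~\ref{lem_macro_bdd}. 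The third piece uses the Sobolev embedding $H^2(\T^3)\hookrightarrow L^\infty(\T^3)$ together with the uniform bound $\|\nabla_v f^n\|_q\lesssim\varepsilon^\beta$ from Lemma~\ref{lem_kinet}. Putting these together gives the linear-in-time bound
\[
\|(f^{n+1}-f^n)(t)\|_q\leq C\int_0^t\bigl(\|(f^n-f^{n-1})(s)\|_q+\|(u^n-u^{n-1})(s)\|_{H^2}\bigr)\,ds,
\]
and squaring via Cauchy--Schwarz (with the factor $t\leq T$) yields \eqref{cauchy-1}.

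\textbf{Main obstacle.} The delicate step is the commutator term $(u^n-u^{n-1})\cdot\nabla_v f^n$ which appears precisely because I freeze the flow to $Z^{n+1}$; its control depends on having a uniform estimate on $\|\nabla_v f^n\|_q$, which is why Lemma~\ref{lem_kinet} must be in place before attempting this Cauchy estimate. Everything else is bookkeeping with the weighted norm, the characteristic growth estimate of Lemma~\ref{lem_flow}, and the Maxwellian estimates of Section~\ref{sec:2}.
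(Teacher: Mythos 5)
Your proof is correct, but it takes a genuinely different route from the paper's. The paper evaluates the Duhamel formulas for $f^{n+1}$ and $f^n$ along their \emph{respective} forward flows $\bar Z^{n+1}$ and $\bar Z^{n}$, and therefore must (i) control the flow discrepancy $|\bar Z^{n+1}-\bar Z^{n}|$ by a separate Gr\"onwall argument (Step 2 of its proof, yielding $|\bar Z^{n+1}-\bar Z^{n}|\lesssim\int_0^t\|u^n-u^{n-1}\|_{H^2}\,ds$) and (ii) absorb the resulting position shifts using mean-value bounds on $\nabla_{x,v}f^n$, $\nabla_{x,v}\mathcal M(f^n)$ and $\nabla\rho_{f^{n-1}}$, which produce the extra terms $\mathcal I_1$, $\mathcal I_2$ there. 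You instead transport \emph{both} equations along the single flow $Z^{n+1}$, so the velocity-field mismatch appears not as a flow perturbation but as the commutator $(u^n-u^{n-1})\cdot\nabla_v f^n$; your algebra for the source $S$ is correct, the initial term drops since both iterates share the datum $f_0$, and each piece of $S$ is controlled exactly by the lemmas you cite (Lemma \ref{lem_lip} via Lemma \ref{lem_macro_bdd} for the Maxwellian difference, \eqref{B-3}--\eqref{B-7} for the weight transfer, and the uniform $W^{1,\infty}_q$ bound of Lemma \ref{lem_kinet} for $\nabla_v f^n$). What your approach buys is the elimination of the flow-stability step and of the terms involving $|\bar Z^{n+1}-\bar Z^n|$; the price is that the commutator forces you to invoke the uniform bound on $\|\nabla_v f^n\|_q$ up front — but the paper's route needs the same first-order information (indeed on both $\nabla_x$ and $\nabla_v$) to handle its $\mathcal I_1$ and the Lipschitz estimate of $\mathcal M(f^n)$ between the two flows, so nothing is lost. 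Both arguments land on the same linear-in-time inequality, and your Cauchy--Schwarz step recovers \eqref{cauchy-1} verbatim.
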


\begin{proof}
\noindent  {\bf (Step 1: estimate of $f^{n+1}-f^n$):} We  consider the forward characteristic  $\Bar Z^n(t):=(\Bar X^n(t),\Bar V^n(t)) := (\Bar X^n(t;0,x,v),\Bar V^n(t;0,x,v))$ given by
\begin{align} \label{forward}
\begin{aligned}
&\frac{d}{dt}\Bar X^{n+1}(t) = \Bar V^{n+1}(t),\\
&\frac{d}{dt}\Bar V^{n+1}(t) = u^n(\Bar X^{n+1}(t),t) - \Bar V^{n+1}(t),
\end{aligned}
\end{align}
subject to the initial data
\[
\Bar Z^{n+1}(0) = (x,v) = z.
\]
A computation similar to that for the backward characteristic shows that $f^{n+1}$ can be formulated as follows.
$$\begin{aligned}
f^{n+1}(\bar Z^{n+1}(t),t)& = e^{\int_0^t (3-\rho_{f^n}(\bar X^{n+1}(s),s))\,ds}f_0(z) \\
&\quad + \int_0^t e^{\int_s^t (3-\rho_{f^n}(\bar X^{n+1}(\tau),\tau))\,d\tau}\rho_{f^n}(\bar X^{n+1}(s),s)\mathcal M(f^n)(\bar Z^{n+1}(s),s)\,ds.
\end{aligned}$$
Then, we have
\begin{align*}
&f^{n+1}(\Bar Z^{n+1}(t),t)-f^n(\Bar Z^{n+1}(t),t) \\
&\qquad = f^n(\Bar Z^n(t),t)-f^n(\Bar Z^{n+1}(t),t) + f^{n+1}(\bar Z^{n+1}(t),t) - f^n(\bar Z^n(t),t)\\
&\qquad =  f^n(\Bar Z^n(t),t)-f^n(\Bar Z^{n+1}(t),t) \\
&\qquad \quad +  \left(e^{\int_0^t (3-\rho_{f^n}(\bar X^{n+1}(s),s))\,ds}-e^{\int_0^t (3-\rho_{f^{n-1}}(\bar X^{n}(s),s))\,ds}   \right)f_0(x,v)  \\
&\qquad \quad+ \int_0^t \bigg(e^{\int_s^t (3-\rho_{f^n}(\bar X^{n+1}(\tau),\tau))\,d\tau}\rho_{f^n}(\bar X^{n+1}(s),s)\mathcal M(f^n)(\bar Z^{n+1}(s),s) \\
&\hspace{3cm} - e^{\int_s^t (3-\rho_{f^{n-1}}(\bar X^{n}(\tau),\tau))\,d\tau}\rho_{f^{n-1}}(\bar X^{n}(s),s)\mathcal M(f^{n-1})(\bar Z^{n}(s),s)\bigg)\,ds \\
&\quad =: \mathcal I_1 + \mathcal I_2 + \mathcal I_3,
\end{align*}
where we denote by $\mathcal I_3$ the integral term. \newline

\noindent $\bullet$ (Estimate of $\mathcal I_1$): We easily estimate
\begin{align} \label{E-0}
\mathcal I_1 \leq \| \nabla_{x,v}f^n\|_{q} |\Bar Z^{n+1}(t)-\Bar Z^n(t)| (1+|v|)^{-q} \leq C |\Bar Z^{n+1}(t)-\Bar Z^n(t)|(1+|v|)^{-q}.
\end{align}

\noindent $\bullet$ (Estimate of $\mathcal I_2$): Note that we have the uniform-in-$n$ bounds of $\rho^n$ and $\rho_{f^n}$ thanks to Lemmas \ref{lem_rho} and  \ref{lem_macro_bdd}. Then, the mean value theorem yields
\begin{align*}
\begin{aligned}
&\left| e^{\int_0^t (3-\rho_{f^n}(\bar X^{n+1}(s),s))\,ds}-e^{\int_0^t (3-\rho_{f^{n-1}}(\bar X^{n}(s),s))\,ds}   \right|\\
&\quad \leq \exp\left(\max\Big\{ \int_0^t (3-\rho_{f^n}(\bar X^{n+1}(s),s))\,ds, \int_0^t (3-\rho_{f^{n-1}}(\bar X^{n}(s),s))\,ds \Big\}   \right) \\
&\hspace{3cm} \times \left| \int_0^t \rho_{f^n}(\bar X^{n+1}(s),s) - \rho_{f^{n-1}}(X^n(s),s)\,ds \right| \\
&\quad \leq   C\int_0^t \lt| \rho_{f^n}(\bar X^{n+1}(s),s)-\rho_{f^{n-1}}(\bar X^n(s),s) \rt|ds.
\end{aligned}
\end{align*}
Note that
\begin{align}
\begin{aligned} \label{E-1}
&\int_0^t \lt| \rho_{f^n}(\bar X^{n+1}(s),s)-\rho_{f^{n-1}}(\bar X^n(s),s) \rt|ds \\
&\qquad \leq   \int_0^t \lt|\rho_{f^n}(\bar X^{n+1}(s),s)-\rho_{f^{n-1}}(\bar X^{n+1}(s),s)\rt| + \lt|\rho_{f^{n-1}}(\bar X^{n+1}(s),s)-\rho_{f^{n-1}}(\bar X^n(s),s)\rt|ds \\
&\qquad \leq  \int_0^t \|(f^n-f^{n-1})(s)\|_q  + \|\nabla \rho_{f^{n-1}}\|_{L^\infty}\lt|\bar X^{n+1}(s)-\bar X^n(s)\rt|ds \\
&\qquad \leq C \int_0^t \|(f^n-f^{n-1})(s)\|_q + \lt|\bar X^{n+1}(s)-\bar X^n(s)\rt|ds .
\end{aligned}
\end{align}
Thus, we find
\begin{align} \label{E-2}
\begin{aligned}
\mathcal I_2 \leq C \|f_0\|_{q} \int_0^t  \left(\|(f^n-f^{n-1})(s)\|_q + \lt|(\bar X^{n+1}-\bar X^n)(s)\rt| \right)ds \cdot (1+|v|)^{-q}.
\end{aligned}
\end{align}
\noindent $\bullet$ (Estimate of $\mathcal I_3$): For notational simplicity, we set
$$\begin{aligned}
&\mathcal A_n:= e^{-\int_s^t \rho_{f^{n-1}}(\bar X^n(\tau),\tau)\,d\tau},  \quad \mathcal B_n := \rho_{f^{n-1}}(\bar X^n(s),s), \quad \mbox{and} \quad \mathcal C_n :=  \mathcal M(f^{n-1})(\bar Z^n(s),s).
\end{aligned}$$
Then, we have
\begin{align*}
\mathcal I_3 &= \int_0^t \lt(\mathcal A_{n+1}\mathcal B_{n+1}\mathcal C_{n+1} - \mathcal A_{n}\mathcal B_{n}\mathcal C_{n}\rt)ds \\
&\leq \int_0^t | \mathcal A_{n+1}\mathcal B_{n+1}||\mathcal C_{n+1} - \mathcal C_n| + |\mathcal A_{n+1} \mathcal C_{n}| | \mathcal B_{n+1} - \mathcal B_n| + |\mathcal B_{n}\mathcal C_{n}||\mathcal A_{n+1}-\mathcal A_n| \,ds.
\end{align*}
We give the estimates of $\mathcal C_{n+1} - \mathcal C_n$, $\mathcal B_{n+1} - \mathcal B_n$, and $\mathcal A_{n+1} - \mathcal A_n$, respectively.
\begin{align*}
\begin{aligned}
&|\mathcal C_{n+1}-\mathcal C_n| \\
&\quad \leq | \mathcal M(f^n)(\bar Z^{n+1}(s),s)-\mathcal M(f^n)(\bar Z^n(s),s)| + |\mathcal M(f^n)(\bar Z^n(s),s) - \mathcal M(f^{n-1})(\bar Z^n(s),s)| \\
&\quad \leq |\nabla_{x,v} \mathcal M(f^n) (\theta \Bar Z^{n+1}(s) + (1-\theta)\Bar Z^n(s),s) \cdot (\Bar Z^{n+1}(s)-\Bar Z^n(s))| \\
&\qquad + |\mathcal M(f^n)(\bar Z^n(s),s) - \mathcal M(f^{n-1})(\bar Z^n(s),s)| \\
&\quad \leq C |\Bar Z^{n+1}(s)-\Bar Z^n(s)|  (1+|v|)^{-q} + |\mathcal M(f^n)(\bar Z^n(s),s) - \mathcal M(f^{n-1})(\bar Z^n(s),s)| \\
&\quad \leq C( |\Bar Z^{n+1}(s)-\Bar Z^n(s)|   +  \| (f^n-f^{n-1})(s)\|_q ) (1+|v|)^{-q},
\end{aligned}
\end{align*}
for some $\theta \in [0,1]$. Here, we used Lemmas \ref{lem_lip} and \ref{lem_macro}. We can also get the estimates for $\mathcal A_{n+1} - \mathcal A_n$ and $\mathcal B_{n+1} - \mathcal B_n$ in the same way as \eqref{E-1}.
\begin{align*}
&|\mathcal A_{n+1} - \mathcal A_n| \leq C \int_s^t \|(f^n-f^{n-1})(\tau)\|_q + |\bar X^{n+1}(\tau)-\bar X^n(\tau)| \,d\tau ,\\
&|\mathcal B_{n+1} - \mathcal B_n| \leq C \|(f^n-f^{n-1})(s)\|_q + |\bar X^{n+1}(s)-\bar X^n(s)|.
\end{align*}
Thus, in view of Lemmas \ref{lem_rho}, \ref{lem_macro_bdd}, and the fact that $| \mathcal C_n | \leq C \|f^{n-1}\|_{q} (1+|v|)^{-q}$, which is by Lemma \ref{lem_M_bdd}, we find that
\begin{align} \label{E-3}
\begin{aligned}
\mathcal I_3 \leq C_T \int_0^t \left( \|(f^n-f^{n-1}(s)\|_q+ \lt|\bar Z^{n+1}(s) - \bar Z^n(s)\rt| \right)ds \cdot (1+|v|)^{-q}.
\end{aligned}
\end{align}
We sum up \eqref{E-0}, \eqref{E-2}, and \eqref{E-3} to get
\begin{align}\begin{aligned} \label{E-4}
|(f^{n+1}-f^n)(t)|(1+|v|)^q &\leq C_T \lt|(\bar Z^{n+1}-\bar Z^n)(s)\rt|  \\
&\quad + C_T \int_0^t \left( \|(f^n-f^{n-1})(s)\|_q+\lt|( \bar Z^{n+1} - \bar Z^n)(s)\rt|\right)ds.
\end{aligned}\end{align}

\noindent  {\bf (Step 2: estimate of $\Bar Z^{n+1}-\Bar Z^n$):} We can easily get from \eqref{forward} that
\begin{equation} \label{E-5}
|(\Bar X^{n+1}-\Bar X^n)(t)| \leq \int_0^t \lt|(\Bar V^{n+1} - \Bar V^n)(s)\rt|ds
\end{equation}
and
\[
\Bar V^{n+1}(t) = ve^{-t} + \int_0^t u^n(\bar X^{n+1}(s),s)e^{-(t-s)}\,ds.
\]
Then we have
\begin{align*}
\begin{aligned}
&|\Bar V^{n+1}(t)-\Bar V^n(t)| \leq \int_0^t \lt| u^n(\Bar X^{n+1}(s),s) - u^{n-1}(\Bar X^n(s),s)\rt| e^{-(t-s)}\,ds.
\end{aligned}
\end{align*}
Using the mean value theorem, we have
\begin{align*}
 \lt| u^n(\Bar X^{n+1}(s),s) - u^{n-1}(\Bar X^n(s),s)\rt|  &\leq |u^n(\Bar X^{n+1}(s),s)-u^n(\Bar X^n(s),s)| + |u^n(\Bar X^n(s),s)-u^{n-1}(\Bar X^n(s),s)| \\
&\leq \|\nabla u^n\|_{L^\infty} |\bar X^{n+1}(s)-\bar X^n(s)| + \|(u^n-u^{n-1})(s)\|_{L^\infty} \\
&\leq  \|\nabla u^n\|_{L^\infty} |\bar X^{n+1}(s)-\bar X^n(s)| + C\|(u^n-u^{n-1})(s)\|_{H^2},
\end{align*}
which gives
\begin{align*}
|(\bar V^{n+1}-\bar V^n)(t)| \leq \int_0^t   \|\nabla u^n\|_{L^\infty}\lt|(\bar X^{n+1}-\bar X^n)(s)\rt|ds + C\int_0^t\| (u^n-u^{n-1})(s)\|_{H^2} \, ds.
\end{align*}
This together with \eqref{E-5} gives
\begin{equation*}
\lt| (\bar Z^{n+1} - \bar Z^n)(t) \rt|  \leq C \int_0^t   \| (u^n-u^{n-1})(s)\|_{H^2}\,ds+ \int_0^t (1 + \|\nabla u^n\|_{L^\infty})\lt| (\bar Z^{n+1} - \bar Z^n)(s) \rt| ds.
\end{equation*}
We then use Gr\"onwall's lemma to have
\[
\|(\Bar Z^{n+1}-\Bar Z^n)(t)\|_{L^\infty} \leq C\int_0^t   \| (u^n-u^{n-1})(s)\|_{H^2}\,ds,
\]
where $C>0$ is independent of $n$.

Finally, by combining the above with \eqref{E-4}, we conclude the desired result.
\end{proof}
\begin{lemma} \label{lem_cauchy2}
Let $(f^{n},\rho^{n},u^n)$ be the solution to system \eqref{app_main}. Then we have the following estimate:
\begin{equation} \label{cauchy-2}
\|(\rho^{n+1}-\rho^n)(t)\|^2_{H^2} \leq C_T \int_0^t \|(u^n-u^{n-1})(s)\|^2_{H^2}\,ds.
\end{equation}
\end{lemma}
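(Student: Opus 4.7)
The plan is to derive the transport equation satisfied by the difference $\phi := \rho^{n+1} - \rho^n$ and to run a standard $H^2$ energy estimate. Subtracting the continuity equations in \eqref{app_main} yields
\[
\pa_t \phi + u^n \cdot \nabla_x \phi = -(u^n - u^{n-1}) \cdot \nabla_x \rho^n, \qquad \phi(\cdot,0) = 0.
\]
For each multi-index $\alpha$ with $|\alpha| \leq 2$, I would apply $\pa^\alpha$, test against $\pa^\alpha \phi$ in $L^2(\T^3)$, and use $\nabla_x \cdot u^n = 0$ to annihilate the top-order transport contribution $\int_{\T^3} u^n \cdot \nabla_x (|\pa^\alpha \phi|^2/2)\,dx = 0$. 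What remains is a commutator term together with a source term coming from the right-hand side.

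The commutator $[\pa^\alpha, u^n\cdot\nabla_x]\phi$ for $|\alpha| = 2$ expands, by the Leibniz rule, into a sum of pieces $\pa^\beta u^n \cdot \nabla_x\pa^{\alpha-\beta}\phi$ with $1 \leq |\beta| \leq 2$. For $|\beta| = 1$ I would use $\|\pa u^n\|_{L^\infty}\|\nabla_x^2 \phi\|_{L^2}$, and for $|\beta| = 2$ I would use the H\"older split $\|\nabla_x^2 u^n\|_{L^6}\|\nabla_x \phi\|_{L^3}$; in both cases the Sobolev embeddings $H^2(\T^3) \hookrightarrow L^\infty$, $H^1(\T^3) \hookrightarrow L^6$, and $H^{1/2}(\T^3) \hookrightarrow L^3$ yield
\[
\|[\pa^\alpha, u^n \cdot \nabla_x]\phi\|_{L^2} \lesssim \|u^n\|_{H^3}\|\phi\|_{H^2}.
\]
The source term is controlled by the tame product estimate together with the uniform bound $\|\rho^n\|_{H^3} \leq C$ from Lemma \ref{lem_rho}:
\[
\|\pa^\alpha((u^n-u^{n-1})\cdot\nabla_x\rho^n)\|_{L^2} \lesssim \|u^n - u^{n-1}\|_{H^2}\|\rho^n\|_{H^3} \lesssim \|u^n-u^{n-1}\|_{H^2}.
\]

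Summing over $|\alpha|\leq 2$, applying Cauchy--Schwarz, and using Young's inequality on the source contribution gives
\[
\frac{d}{dt}\|\phi\|_{H^2}^2 \leq C\bigl(1+\|u^n\|_{H^3}\bigr)\|\phi\|_{H^2}^2 + C\|u^n-u^{n-1}\|_{H^2}^2.
\]
The hypothesis \eqref{n_condition} of Proposition \ref{prop} ensures $\|u^n\|_{L^2(0,T;H^3)} < \varepsilon^\alpha$, so by Cauchy--Schwarz $\int_0^T (1 + \|u^n\|_{H^3})\,ds \leq T + T^{1/2}\varepsilon^\alpha \leq C_T$; hence the Gr\"onwall integrating factor is bounded uniformly in $n$. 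Since $\phi(\cdot,0)=0$, Gr\"onwall's lemma then delivers \eqref{cauchy-2}. The chief subtlety is exactly that the commutator forces one to use the $H^3$-regularity of $u^n$, which is only available in $L^2$-in-time rather than uniformly in time; this is what makes the $L^p/L^q$ splitting $\|\nabla_x^2 u^n\|_{L^6}\|\nabla_x \phi\|_{L^3}$ preferable to the naive bound $\|\nabla_x^2 u^n\|_{L^2}\|\nabla_x \phi\|_{L^\infty}$, since the latter would produce a stray factor not absorbable into $\|\phi\|_{H^2}^2$, and ultimately why the constant on the right-hand side of \eqref{cauchy-2} depends on $T$.
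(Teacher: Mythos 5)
Your proposal is correct and follows essentially the same route as the paper: subtract the two continuity equations, perform $L^2$ energy estimates on $\pa^\alpha(\rho^{n+1}-\rho^n)$ for $|\alpha|\le 2$ (the paper writes out the zeroth, first, and second order terms explicitly rather than via a commutator, but the H\"older/Sobolev splittings such as $\|\cdot\|_{L^6}\|\cdot\|_{L^3}$ are the same), control the source with $\|\rho^n\|_{H^3}\le C\|\rho_0\|_{H^3}$ from Lemma \ref{lem_rho}, and conclude by Gr\"onwall with zero initial difference. Your explicit remark that the Gr\"onwall factor involves $\|u^n\|_{H^3}$ only through its $L^1(0,T)$ norm, which is controlled via \eqref{n_condition}, is a point the paper leaves implicit.
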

\begin{proof}
We obtain from the continuity equation in \eqref{app_main} that
\[
\pa_t(\rho^{n+1}-\rho^n) = - u^n \cdot \nabla(\rho^{n+1}-\rho^n) - (u^n-u^{n-1}) \cdot \nabla \rho^n.
\]
\noindent $\bullet$ (Zeroth order estimate): A straightforward computation gives
\begin{align}
\begin{aligned} \label{E-6}
&\frac 12 \frac{d}{dt}\| \rho^{n+1}-\rho^n\|^2_{L^2} \\
&\quad = -\int_{\T^3} (\rho^{n+1}-\rho^n) u^n \cdot \nabla(\rho^{n+1}-\rho^n)\,dx- \int_{\T^3} (\rho^{n+1}-\rho^n) (u^n-u^{n-1})\cdot \nabla \rho^n \,dx  \\
&\quad \leq \|u^n\|_{L^\infty} \| \nabla(\rho^{n+1}-\rho^n)\|_{L^2} \| \rho^{n+1}-\rho^n\|_{L^2} + \| u^n-u^{n-1}\|_{L^6} \| \nabla \rho^n\|_{L^3} \| \rho^{n+1}-\rho^n\|_{L^2} \\
&\quad \leq C\lt(\|\rho^{n+1}-\rho^n\|_{H^1}^2 + \| u^n-u^{n-1} \|_{H^1}^2 \rt),
\end{aligned}
\end{align}
where we used  the Sobolev embedding $H^1(\T^3) \subseteq L^6(\T^3)$ and the Young's inequality for the last inequality. \newline

\noindent $\bullet$ (First order estimate): For $j=1,2,3$, we use H\"older's inequality to have 
\begin{align} \label{E-7}
\begin{aligned}
&\frac 12 \frac{d}{dt} \| \pa_{x_j}(\rho^{n+1}-\rho^n)\|^2_{L^2} \\
&\quad = -\int_{\T^3}\left( \pa_{x_j} u^n \cdot \nabla(\rho^{n+1}-\rho^n) + u^n \cdot \nabla \pa_{x_j}(\rho^{n+1}-\rho^n)\right)\pa_{x_j}(\rho^{n+1}-\rho^n)\,dx \\
&\qquad - \int_{\T^3} \left( \pa_{x_j}(u^n-u^{n-1})\cdot \nabla \rho^n - (u^n-u^{n-1})\cdot \nabla \pa_{x_j} \rho^n \right) \pa_{x_j}(\rho^{n+1}-\rho^n)\,dx \\
&\quad \leq C\| \pa_{x_j} u^n\|_{L^\infty} \| \nabla(\rho^{n+1}-\rho^n)\|_{L^2}^2 + \| \nabla u^n\|_{L^\infty} \| \pa_{x_j}(\rho^{n+1}-\rho^n)\|_{L^2}^2 \\
&\qquad + \left( \| \pa_{x_j}(u^n-u^{n-1})\|_{L^2} \| \nabla \rho^n\|_{L^\infty} + \|u^n- u^{n-1}\|_{L^6} \| \nabla \pa_{x_j} \rho^n\|_{L^3} \right) \| \pa_{x_j}(\rho^{n+1}-\rho^n)\|_{L^2}\\
&\quad \leq C\lt( \| \rho^{n+1}-\rho^n\|^2_{H^1} + \| u^n- u^{n-1} \|^2_{H^1}\rt).
\end{aligned}
\end{align}

\noindent $\bullet$ (Second order estimate): Similarly, for $i,j=1,2,3$, we obtain 
\begin{align} \label{E-8}
\begin{aligned}
&\frac{1}{2}\frac{d}{dt}\|\pa_{x_i}\pa_{x_j}(\rho^{n+1}-\rho^n)\|_{L^2}^2 \\
&\quad = - \int_{\T^3}\pa_{x_i}\pa_{x_j}(\rho^{n+1}-\rho^n)(\pa_{x_i}\pa_{x_j} u^n \cdot(\rho^{n+1}-\rho^n) +\pa_{x_j} u^n \cdot \nabla \pa_{x_i}(\rho^{n+1}-\rho^n))\,dx \\
&\qquad- \int_{\T^3}\pa_{x_i}\pa_{x_j}(\rho^{n+1}-\rho^n)(\pa_{x_i} u^n \cdot \nabla \pa_{x_j}(\rho^{n+1}-\rho^n) + u^n \cdot \nabla \pa_{x_i}\pa_{x_j}(\rho^{n+1}-\rho^n))\,dx\\
&\qquad- \int_{\T^3} \pa_{x_i}\pa_{x_j}(\rho^{n+1}-\rho^n)(\pa_{x_i}\pa_{x_j}(u^n-u^{n-1})\cdot \nabla \rho^n + \pa_{x_j}(u^n-u^{n-1})\cdot \nabla \pa_{x_i} \rho^n)\,dx\\
&\qquad- \int_{\T^3}\pa_{x_i}\pa_{x_j}(\rho^{n+1}-\rho^n)(\pa_{x_i}(u^n-u^{n-1})\cdot \nabla \pa_{x_j} \rho^n + (u^n-u^{n-1})\cdot \nabla \pa_{x_i}\pa_{x_j} \rho^n)\,dx \\
&\quad \leq C(\|\rho^{n+1}-\rho^n\|_{H^2}^2 + \|u^n-u^{n-1}\|_{H^2}^2).
\end{aligned}
\end{align}
Then, the conclusion follows from the summation of \eqref{E-6}, \eqref{E-7}, and \eqref{E-8}.
\end{proof}
\begin{lemma} \label{lem_cauchy3}
Let $(f^{n},\rho^{n},u^n)$ be the solution to system \eqref{app_main}. Then we have the following estimate:
\begin{align}\begin{aligned} \label{cauchy-3}
&\|(u^{n+1}-u^n)(t)\|_{L^2}^2  + \int_0^t \int_{\T^3} \| \nabla (u^{n+1}-u^n)(s) \|_{L^2}^2 \,dxds \\
\quad & \leq C_T \int_0^t \|(u^n-u^{n-1})(s)\|^2_{H^1} + \|(\rho^{n+1}-\rho^n)(s)\|^2_{H^2} + \|(f^{n+1}-f^n)(s)\|^2_{q}\,ds.
\end{aligned}\end{align}
\end{lemma}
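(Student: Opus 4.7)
The plan is to derive an equation for the difference $w^{n+1}:=u^{n+1}-u^n$ by subtracting the two momentum equations in \eqref{app_main}, then close a density-weighted $L^2$ energy estimate. The right-hand side of the difference equation collects the forcing terms $-(\rho^{n+1}-\rho^n)\pa_t u^n$, $-(\rho^{n+1}-\rho^n)u^n\cdot\nabla u^n$, $-\rho^n(u^n-u^{n-1})\cdot\nabla u^n$, together with the kinetic drag residue obtained from $-\int(u^n-v)f^{n+1}\,dv+\int(u^{n-1}-v)f^n\,dv$, which splits as $-(u^n-u^{n-1})\rho_{f^{n+1}}-u^{n-1}(\rho_{f^{n+1}}-\rho_{f^n})+(\rho_{f^{n+1}}U_{f^{n+1}}-\rho_{f^n}U_{f^n})$.

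Testing with $w^{n+1}$ and invoking $\nabla\cdot u^n=0$ together with the continuity equation $\pa_t\rho^{n+1}+u^n\cdot\nabla\rho^{n+1}=0$, the transport and convection pieces on the left telescope into $\tfrac{1}{2}\tfrac{d}{dt}\int_{\T^3}\rho^{n+1}|w^{n+1}|^2\,dx$, the pressure drops by incompressibility, and the Laplacian supplies the dissipation $\|\nabla w^{n+1}\|_{L^2}^2$. Each of the six pairings on the right is then controlled by H\"older's inequality, the Sobolev embeddings $H^1(\T^3)\hookrightarrow L^6(\T^3)$ and $H^2(\T^3)\hookrightarrow L^\infty(\T^3)$, and Young's inequality, absorbing every $\|\nabla w^{n+1}\|_{L^2}^2$ contribution back into the left. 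The uniform-in-$n$ controls $\rho^n\in[\delta,C]$, $\rho_{f^n}+|U_{f^n}|+T_{f^n}\leq C$, $\|u^n\|_{H^2}\leq \varepsilon^\alpha$, and $\|\pa_t u^n\|_{L^\infty_t L^2_x}\leq\varepsilon^\alpha$ furnished by Lemmas \ref{lem_rho} and \ref{lem_macro_bdd} combined with hypothesis \eqref{n_condition} keep every prefactor bounded by a constant depending only on $T$. The kinetic drag contributions collapse to $\|f^{n+1}-f^n\|_q^2$ via the moment estimates $|\rho_{f^{n+1}}-\rho_{f^n}|+|\rho_{f^{n+1}}U_{f^{n+1}}-\rho_{f^n}U_{f^n}|\lesssim\|f^{n+1}-f^n\|_q$, valid thanks to $q>5$.

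After using $\rho^{n+1}\geq\delta>0$ to revert $\int_{\T^3}\rho^{n+1}|w^{n+1}|^2\,dx$ into $\|w^{n+1}\|_{L^2}^2$, integrating over $[0,t]$ with the vanishing initial difference $w^{n+1}(0)=0$ from \eqref{init_app}, and applying Gr\"onwall's lemma against a coefficient lying in $L^1(0,T)$ by \eqref{n_condition}, the estimate \eqref{cauchy-3} follows. The technically delicate point will be the pairing $\int_{\T^3}(\rho^{n+1}-\rho^n)\pa_t u^n\cdot w^{n+1}\,dx$: it must be handled as $\|\rho^{n+1}-\rho^n\|_{L^\infty}\|\pa_t u^n\|_{L^2}\|w^{n+1}\|_{L^2}$, which is precisely why \eqref{cauchy-3} carries the $H^2$-norm of the density difference on its right-hand side and why the $\pa_t u^n\in L^\infty_t L^2_x$ control built into \eqref{n_condition} is indispensable.
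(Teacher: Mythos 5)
Your plan is correct and follows essentially the same route as the paper: subtract the two momentum equations, perform a density-weighted $L^2$ energy estimate on $w^{n+1}=u^{n+1}-u^n$ using the continuity equation and $\nabla\cdot u^n=0$ to telescope the material derivative, kill the pressure by incompressibility, absorb the dissipation, and bound the cross terms by H\"older/Sobolev/Young together with the uniform bounds of Lemmas \ref{lem_rho} and \ref{lem_macro_bdd} and the moment estimates reducing the drag residue to $\|f^{n+1}-f^n\|_q$. The only (immaterial) differences are that you weight by $\rho^{n+1}$ and pair the differences against the $n$-th iterates where the paper uses $\rho^n$ and the $(n+1)$-th iterates, and you correctly make explicit the final Gr\"onwall step that the paper leaves implicit.
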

\begin{proof}
We first use $\eqref{app_main}_3$ to find
\begin{align} \label{E-9}
\begin{aligned}
&\rho^n \pa_t(u^{n+1} - u^n) \\
&\quad = -\rho^n u^{n-1} \cdot \nabla(u^{n+1}-u^n) + \Delta(u^{n+1}-u^n) - \nabla(p^{n+1}-p^n) - (\rho^{n+1}-\rho^n) \pa_t u^{n+1}  \\
&\qquad - (\rho^{n+1}-\rho^n)u^n \cdot \nabla u^{n+1} - \rho^n(u^n- u^{n-1})\cdot \nabla u^{n+1} - (\rho^{n+1}-\rho^n) \int_{\R^3}(u^n-v)f^{n+1}dv \\
&\qquad -\rho^n \int_{\R^3}(u^n-u^{n-1})f^{n+1}dv - \rho^n \int_{\R^3}(u^n-v)(f^{n+1}-f^n)\,dv.
\end{aligned}
\end{align}
Then, we have
\begin{align*}
&\frac 12 \frac{d}{dt} \int_{\T^3} \rho^n |u^{n+1}-u^n|^2 \,dx \\
&\quad= \frac 12 \int_{\T^3} \pa_t \rho^n | u^{n+1}-u^n|^2\,dx + \int_{\T^3} \rho^n(u^{n+1}-u^n) \cdot \pa_t(u^{n+1}-u^n)\,dx\\
&\quad = \frac 12 \int_{\T^3} \pa_t \rho^n | u^{n+1}-u^n|^2\,dx + \int_{\T^3}(u^{n+1}-u^n) \cdot \bigg( -\rho^n u^{n-1} \cdot \nabla(u^{n+1}-u^n) \\
&\qquad + \Delta(u^{n+1}-u^n) - \nabla(p^{n+1}-p^n) - (\rho^{n+1}-\rho^n)\pa_t u^{n+1} - (\rho^{n+1}-\rho^n)u^n \cdot \nabla u^{n+1} \\
&\qquad + \rho^n(u^n-u^{n-1}) \cdot \nabla u^{n+1} - (\rho^{n+1}-\rho^n) \int_{\R^3}(u^n-v)f^{n+1}\,dv - \rho^n \int_{\R^3}(u^n-u^{n-1})f^{n+1}\,dv \\
&\qquad - \rho^n \int_{\R^3} (u^n-v)(f^{n+1}-f^n)\,dv \bigg) \,dx \\
&\quad =: \sum_{i=1}^{10} \mathcal J_i.
\end{align*}
The estimates of each term $\mathcal J_i, i=1,\cdots,10$ are given as follows.
\begin{align*}
& \mathcal J_1 = -\frac 12 \int_{\T^3} (u^{n-1} \cdot \nabla \rho^n)|u^{n+1}-u^n|^2\,dx \lesssim \| u^{n+1}-u^n\|_{L^2}^2,\\
& \mathcal J_2 \lesssim \| u^{n+1}-u^n\|_{L^2} \| \nabla(u^{n+1}-u^n)\|_{L^2} \lesssim \| u^{n+1}-u^n\|_{L^2}^2 + \| \nabla(u^{n+1}-u^n)\|_{L^2}^2, \\
& \mathcal J_3 = - \| \nabla(u^{n+1}-u^n)\|_{L^2}^2, \qquad \mathcal J_4  = 0, \\
& \mathcal J_5 \leq \|u^{n+1}-u^n\|_{L^2} \| \pa_t u^{n+1}\|_{L^2 }\| \rho^{n+1} - \rho^n \|_{L^\infty} \lesssim \|u^{n+1}-u^n\|_{L^2}^2+ \| \rho^{n+1} - \rho^n \|_{H^2}^2,\\
& \mathcal J_6 \leq \|u^n\|_{L^6}\| \nabla u^{n+1}\|_{L^6}\| \rho^{n+1}-\rho^n\|_{L^6} \|u^{n+1}-u^n\|_{L^2} \lesssim \|\rho^{n+1}-\rho^n\|_{H^1}^2 + \|u^{n+1}-u^n\|_{L^2}^2,\\
& \mathcal J_7 \leq \| \rho^n\|_{L^\infty} \| u^{n+1}-u^n\|_{L^6} \| u^n-u^{n-1}\|_{L^2} \| \nabla u^{n+1}\|_{L^3} \lesssim \|\nabla(u^{n+1}-u^n)\|_{L^2}^2+   \|u^n-u^{n-1}\|_{L^2}^2,\\
& \mathcal J_8 \leq \|u^{n+1}-u^n\|_{L^2} \| \rho^{n+1}-\rho^n\|_{L^6} \lt\| \int_{\R^3}(u^n-v)f^{n+1}\,dv \rt\|_{L^3} \lesssim \| u^{n+1}-u^n\|_{L^2}^2 + \| \rho^{n+1}-\rho^n\|_{H^1}^2.
\end{align*}
Here, the last term was estimated as follows:
\begin{align*}
\lt\| \int_{\R^3}(u^n-v)f^{n+1}\,dv \rt\|_{L^3} &\lesssim \lt\| \int_{\R^3}(u^n-v)f^{n+1}\,dv \rt\|_{L^\infty} \\
&\leq \lt\| \int_{\R^3} u^n f^{n+1}\,dv\rt\|_{L^\infty} + \lt\|\int_{\R^3} vf^{n+1}\,dv \rt\|_{L^\infty} \\
&\leq \|u^n\|_{L^\infty}\|f^{n+1}\|_q + \|f^{n+1}\|_q \leq C.
\end{align*}
Similarly, $\mathcal J_9$ and $\mathcal J_{10}$ can be estimated as follows.
\begin{align*}
 \mathcal J_9 &\leq \| \rho^n\|_{L^\infty} \|u^{n+1}-u^n\|_{L^2}\|u^n- u^{n-1}\|_{L^6} \lt\|\int_{\R^3}f^{n+1}\,dv  \rt\|_{L^3} \\
&  \lesssim \|u^{n+1}-u^n\|_{L^2}^2 + \|u^n-u^{n-1}\|_{H^1}^2,
\end{align*}
\begin{align*}
 \mathcal J_{10} &\leq \| \rho^n\|_{L^\infty} \| u^{n+1}-u^n\|_{L^2} \lt\| \int_{\R^3}(u^n-v)(f^{n+1}-f^n)\,dv \rt\|_{L^2} \\
& \leq \| \rho^n\|_{L^\infty} \| u^{n+1}-u^n\|_{L^2}  ( \|u^n\|_{L^\infty}+1)\|f^{n+1}-f^n\|_q \\
&\lesssim \|u^{n+1}-u^n\|_{L^2}^2 + \| f^{n+1}-f^n\|_q^2.
\end{align*}
We sum up the estimates above and integrate from 0 to $t$ to get
\begin{align*}
&\int_{\T^3}\rho^n |u^{n+1}-u^n|^2\,dx +\int_0^t \| \nabla(u^{n+1}-u^n)(s)\|_{L^2}^2\,ds \\
&\qquad \leq C_T \int_0^t \|(u^n-u^{n-1})(s)\|^2_{H^1} + \|(\rho^{n+1}-\rho^n)(s)\|^2_{H^2} + \|(f^{n+1}-f^n)(s)\|^2_{q}\,ds.
\end{align*}
Finally, the conclusion follows in view of Lemma \ref{lem_rho}.
\end{proof}
\begin{lemma} \label{lem_cauchy4}
Let $(f^{n},\rho^{n},u^n)$ be the solution to system \eqref{app_main}. Then we have the following estimate:
\begin{align} \label{cauchy-4}
\begin{aligned}
&\|\nabla(u^{n+1}-u^n)(t)\|^2_{L^2} + \int_0^t \|\pa_s(u^{n+1}-u^n)(s)\|^2_{L^2}\,ds \\
&\quad \leq C_T \int_0^t \|(\rho^{n+1}-\rho^n)(s)\|^2_{H^2} + \|(u^n-u^{n-1})(s)\|^2_{H^1} + \|(f^{n+1}-f^n)(s)\|^2_{q}\,ds.
\end{aligned}
\end{align}
\end{lemma}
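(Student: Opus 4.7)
The plan is to test equation \eqref{E-9} against $\pa_t(u^{n+1}-u^n)$ (rather than against $(u^{n+1}-u^n)$ as in Lemma \ref{lem_cauchy3}) and integrate over $\T^3$. Three structural identities organize the computation: after integration by parts the diffusion term yields $\int_{\T^3}\Delta(u^{n+1}-u^n)\cdot\pa_t(u^{n+1}-u^n)\,dx = -\tfrac{1}{2}\tfrac{d}{dt}\|\nabla(u^{n+1}-u^n)\|_{L^2}^2$; the pressure contribution vanishes since $\nabla\cdot\pa_t(u^{n+1}-u^n)=0$; and the lower bound on $\rho^n$ from Lemma \ref{lem_rho} gives $\int_{\T^3}\rho^n|\pa_t(u^{n+1}-u^n)|^2\,dx \geq \delta\|\pa_t(u^{n+1}-u^n)\|_{L^2}^2$, producing the second term on the left of \eqref{cauchy-4}.

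With this structure, I would estimate each remaining term on the right of \eqref{E-9} after pairing with $\pa_t(u^{n+1}-u^n)$ via Cauchy--Schwarz and Young's inequality, absorbing every $\pa_t(u^{n+1}-u^n)$ factor into $\tfrac{\delta}{2}\|\pa_t(u^{n+1}-u^n)\|_{L^2}^2$. The uniform bounds from \eqref{n_condition} and Lemmas \ref{lem_rho}, \ref{lem_macro_bdd} render the $L^\infty$ norms of $\rho^n$, $u^n$, $u^{n-1}$, and $u^{n+1}$ harmless, and $\|\rho^{n+1}-\rho^n\|_{L^\infty} \lesssim \|\rho^{n+1}-\rho^n\|_{H^2}$ by Sobolev embedding. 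For the convective term $\rho^n(u^n-u^{n-1})\cdot\nabla u^{n+1}$ I would use $\|u^n-u^{n-1}\|_{L^3}\|\nabla u^{n+1}\|_{L^6}$ together with $H^1\hookrightarrow L^3$ and $H^2\hookrightarrow W^{1,6}$ to keep the difference at the $H^1$ level. The drag terms involving $v$-integrals are controlled exactly as in Lemma \ref{lem_cauchy3} using $\|f^n\|_q, \|f^{n+1}\|_q \lesssim 1$, and the last term $\rho^n\int_{\R^3}(u^n-v)(f^{n+1}-f^n)\,dv$ furnishes the $\|f^{n+1}-f^n\|_q^2$ contribution.

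Collecting all estimates yields a differential inequality of the form
\begin{align*}
\frac{d}{dt}\|\nabla(u^{n+1}-u^n)\|_{L^2}^2 + \|\pa_t(u^{n+1}-u^n)\|_{L^2}^2 &\lesssim \|\nabla(u^{n+1}-u^n)\|_{L^2}^2 \\
&\quad + \|\rho^{n+1}-\rho^n\|_{H^2}^2 + \|u^n-u^{n-1}\|_{H^1}^2 + \|f^{n+1}-f^n\|_q^2,
\end{align*}
and since $(u^{n+1}-u^n)(0)=0$, integrating in time and applying Gr\"onwall's lemma to the $\|\nabla(u^{n+1}-u^n)\|_{L^2}^2$ factor delivers \eqref{cauchy-4}.

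The main obstacle I anticipate is the term $(\rho^{n+1}-\rho^n)\pa_t u^{n+1}$: when paired with $\pa_t(u^{n+1}-u^n)$, it forces the use of $\|\pa_t u^{n+1}\|_{L^2}$, which is precisely one of the quantities uniformly controlled by \eqref{n_condition}. Without this a priori bound the argument would break down, and this is the principal reason for testing against $\pa_t(u^{n+1}-u^n)$ rather than a less singular multiplier. A secondary technical point is that the convective pairing $\rho^n u^{n-1}\cdot\nabla(u^{n+1}-u^n)$ produces $\|\nabla(u^{n+1}-u^n)\|_{L^2}^2$ on the right, which is why Gr\"onwall's lemma is needed to close the estimate rather than a direct integration.
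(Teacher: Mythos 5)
Your proposal is correct and follows essentially the same route as the paper: test \eqref{E-9} against $\pa_t(u^{n+1}-u^n)$, kill the pressure by divergence-freeness, use the lower bound on $\rho^n$ from Lemma \ref{lem_rho} to extract $\|\pa_t(u^{n+1}-u^n)\|_{L^2}^2$, estimate the remaining terms by H\"older, Sobolev embedding and Young exactly as in Lemma \ref{lem_cauchy3}, and integrate in time. If anything you are slightly more careful than the paper, which silently drops the $\|\nabla(u^{n+1}-u^n)\|_{L^2}^2$ contribution from the convective pairing that your final Gr\"onwall step is there to absorb.
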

\begin{proof}
We take an inner product of both sides of \eqref{E-9} with $\pa_t(u^{n+1}-u^n)$ and integrate it over $\T^3$  to find
\begin{align*}
&\int_{\T^3} \rho^n |\pa_t(u^{n+1}-u^n)|^2\,dx + \frac{1}{2}\frac{d}{dt}\int_{\T^3}| \nabla (u^{n+1}-u^n)|^2\,dx \\
&\quad = -\int_{\T^3} \pa_t(u^{n+1}-u^n) \cdot \bigg(\rho^n u^{n-1}\cdot \nabla(u^{n+1}-u^n) + (\rho^{n+1}-\rho^n) \pa_t u^{n+1} \\
&\qquad + (\rho^{n+1}-\rho^n)u^n \cdot \nabla u^{n+1} + \rho^n(u^n - u^{n-1})\cdot \nabla u^{n+1} + (\rho^{n+1}-\rho^n) \int_{\R^3} (u^n-v)f^{n+1}\,dv \\
&\qquad + \rho^n \int_{\R^3}(u^n-u^{n-1}) f^{n+1}\,dv + \rho^n \int_{\R^3}(u^n-v)(f^{n+1}-f^n) \,dv \bigg)\,dx \\
&\quad =: \sum_{i=1}^7 \mathcal K_i.
\end{align*}
We can derive the estimates similar to those in Lemma \ref{lem_cauchy3}.
\begin{align*}
\mathcal K_1 &\leq \| \rho^n\|_{L^\infty} \|u^{n-1}\|_{L^\infty} \| \pa_t(u^{n+1}-u^n)\|_{L^2} \| \nabla(u^{n+1}-u^n)\|_{L^2}\\
  &\lesssim \| \pa_t(u^{n+1}-u^n)\|_{L^2}^2 + \| \nabla(u^{n+1}-u^n)\|_{L^2}^2,\\
\mathcal K_2 &\leq \| \rho^{n+1}-\rho^n\|_{L^\infty} \| \pa_t(u^{n+1}-u^n)\|_{L^2} \| \pa_t u^{n+1}\|_{L^2} \\
  &\lesssim \| \pa_t(u^{n+1}-u^n)\|_{L^2}^2 + \| \rho^{n+1}-\rho^n\|_{H^2}^2,\\
\mathcal K_3 &\leq \| \rho^{n+1}-\rho^n\|_{L^6} \|u^n\|_{L^\infty} \| \nabla u^{n+1}\|_{L^3} \| \pa_t(u^{n+1}-u^n)\|_{L^2} \\
  &\lesssim \| \pa_t(u^{n+1}-u^n)\|_{L^2}^2 + \| \rho^{n+1}-\rho^n\|_{H^1}^2,\\
\mathcal K_4 &\leq \| \rho^n\|_{L^\infty} \|u^n-u^{n-1}\|_{L^6} \| \nabla u^{n+1}\|_{L^3} \| \pa_t(u^{n+1}-u^n)\|_{L^2} \\
  &\lesssim \| \pa_t(u^{n+1}-u^n)\|_{L^2}^2 + \|u^n-u^{n-1}\|_{H^1}^2,\\
\mathcal K_5 &\leq \|\rho^{n+1}-\rho^n\|_{L^6} \| \pa_t(u^{n+1}-u^n)\|_{L^2} \lt\| \int_{\R^3}(u^n-v)f^{n+1}\,dv \rt\|_{L^3} \\ &\lesssim \| \pa_t(u^{n+1}-u^n)\|_{L^2}^2 + \| \rho^{n+1}-\rho^n\|_{H^1}^2,\\
\mathcal K_6 &\leq \| \rho^n\|_{L^\infty} \| \pa_t(u^{n+1}-u^n)\|_{L^2} \|u^n-u^{n-1}\|_{L^6}\lt\| \int_{\R^3}f^{n+1}\,dv \rt\|_{L^3} \\
  &\lesssim \| \pa_t(u^{n+1}-u^n)\|_{L^2}^2 + \|u^n-u^{n-1}\|_{H^1}^2, \\
\mathcal K_7 &\leq \| \rho^n\|_{L^\infty} \| \pa_t(u^{n+1}-u^n)\|_{L^2} \lt\| \int_{\R^3}(u^n-v)(f^{n+1}-f^n)\,dv \rt\|_{L^2}\\ &\lesssim \| \pa_t(u^{n+1}-u^n)\|_{L^2}^2 + \|f^{n+1}-f^n\|_q^2.
\end{align*}
So, we have
\begin{align*}
&\int_{\T^3} \rho^n |\pa_t(u^{n+1}-u^n)|^2 \,dx + \frac{1}{2}\frac{d}{dt}\int_{\T^3}| \nabla (u^{n+1}-u^n)|^2 \,dx  \\
&\qquad \lesssim \|\rho^{n+1}-\rho^n\|^2_{H^2} + \|u^n-u^{n-1}\|^2_{H^1} + \|f^{n+1}-f^n\|^2_{q}.
\end{align*}
Finally, we take an integration from 0 to $t$ and use Lemma \ref{lem_rho} to obtain the desired result.
\end{proof}
\begin{lemma} \label{lem_cauchy5}
Let $(f^{n},\rho^{n},u^n)$ be the solution to system \eqref{app_main}. Then we have the following estimate:
\begin{align}
\begin{aligned} \label{cauchy-5}
&\| \nabla^2(u^{n+1}-u^n)\|^2_{L^2} + \| \nabla(p^{n+1}-p^n)\|^2_{L^2} \\
& \quad \leq C( \|\rho^{n+1}-\rho^n\|^2_{H^2} + \|\pa_t(u^{n+1}-u^n)\|^2_{L^2} + \| \nabla(u^n-u^{n-1})\|^2_{L^2} )\\
&\qquad +  \| \nabla(u^{n+1}-u^n)\|^2_{L^2} + \|f^{n+1}-f^n\|^2_{q}).
\end{aligned}
\end{align}
\end{lemma}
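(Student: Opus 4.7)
The plan is to read \eqref{E-9} as an inhomogeneous stationary Stokes system for the pair $(u^{n+1}-u^n,\,p^{n+1}-p^n)$ and then to apply the standard elliptic (Stokes) regularity on $\T^3$. Writing $w:=u^{n+1}-u^n$ and $q:=p^{n+1}-p^n$, equation \eqref{E-9} can be rearranged as
\[
-\Delta w + \nabla q = F, \qquad \nabla\cdot w = 0,
\]
where the forcing $F$ collects all of the remaining terms from \eqref{E-9}, namely
\[
F = -\rho^n\partial_t w - \rho^n u^{n-1}\!\cdot\!\nabla w - (\rho^{n+1}-\rho^n)\partial_t u^{n+1} - (\rho^{n+1}-\rho^n)u^n\!\cdot\!\nabla u^{n+1}
\]
\[
\qquad - \rho^n(u^n-u^{n-1})\!\cdot\!\nabla u^{n+1} - (\rho^{n+1}-\rho^n)\!\!\int_{\R^3}\!(u^n-v)f^{n+1}\,dv - \rho^n\!\!\int_{\R^3}\!(u^n-u^{n-1})f^{n+1}\,dv - \rho^n\!\!\int_{\R^3}\!(u^n-v)(f^{n+1}-f^n)\,dv.
\]
By the classical Stokes estimate on the torus (which follows from integrating by parts or from Fourier multiplier theory),
\[
\|\nabla^2 w\|_{L^2} + \|\nabla q\|_{L^2} \lesssim \|F\|_{L^2}.
\]

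The remaining task is to bound $\|F\|_{L^2}$ by the right-hand side of \eqref{cauchy-5}. This is done term by term, reusing exactly the uniform-in-$n$ ingredients already set up for Lemmas \ref{lem_cauchy3} and \ref{lem_cauchy4}: Lemma \ref{lem_rho} gives $\|\rho^n\|_{L^\infty} + \|\nabla\rho^n\|_{L^\infty}\lesssim 1$, the induction hypothesis \eqref{n_condition} together with the Sobolev embeddings $H^2\hookrightarrow L^\infty$ and $H^1\hookrightarrow L^6$ controls $\|u^n\|_{L^\infty}$, $\|u^{n-1}\|_{L^\infty}$, $\|\nabla u^{n+1}\|_{L^3}$ and $\|\partial_t u^{n+1}\|_{L^2}$, and Lemma \ref{lem_kinet} together with an argument identical to the $\mathcal{J}_8$--$\mathcal{J}_{10}$ estimates of Lemma \ref{lem_cauchy3} provides $\|\int(u^n-v)f^{n+1}dv\|_{L^\infty}\lesssim 1$ and $\|\int(u^n-v)(f^{n+1}-f^n)dv\|_{L^2}\lesssim \|f^{n+1}-f^n\|_q$. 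The resulting bounds are:
\begin{align*}
\|\rho^n\partial_t w\|_{L^2} &\lesssim \|\partial_t w\|_{L^2},\qquad
\|\rho^n u^{n-1}\!\cdot\!\nabla w\|_{L^2} \lesssim \|\nabla w\|_{L^2},\\
\|(\rho^{n+1}-\rho^n)\partial_t u^{n+1}\|_{L^2} &\lesssim \|\rho^{n+1}-\rho^n\|_{H^2},\qquad
\|(\rho^{n+1}-\rho^n)u^n\!\cdot\!\nabla u^{n+1}\|_{L^2}\lesssim \|\rho^{n+1}-\rho^n\|_{H^2},\\
\|\rho^n(u^n-u^{n-1})\!\cdot\!\nabla u^{n+1}\|_{L^2} &\lesssim \|u^n-u^{n-1}\|_{L^6}\|\nabla u^{n+1}\|_{L^3} \lesssim \|\nabla(u^n-u^{n-1})\|_{L^2},
\end{align*}
with the three $v$-integral terms treated analogously. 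Squaring and summing produces exactly \eqref{cauchy-5}.

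The main potential obstacle is not analytical but bookkeeping: one must ensure every product lands in $L^2$ via the right Hölder split so that each factor carrying a difference ($\rho^{n+1}-\rho^n$, $u^n-u^{n-1}$, $f^{n+1}-f^n$) is measured in a norm already appearing on the right-hand side of \eqref{cauchy-5}, while the remaining factor is absorbed into a uniform-in-$n$ constant coming from \eqref{n_condition} and Lemmas \ref{lem_rho}, \ref{lem_macro_bdd}, \ref{lem_kinet}. The term $\rho^n(u^n-u^{n-1})\cdot\nabla u^{n+1}$ is the one to watch, since it is the only place where a second-order derivative of $u^{n+1}$ would naively appear; the $L^6$--$L^3$ split above keeps $u^{n+1}$ at the $H^2$ level, which is under uniform control.
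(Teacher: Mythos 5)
Your proposal is correct and takes essentially the same route as the paper: the paper likewise rewrites \eqref{E-9} as a stationary Stokes system for $(u^{n+1}-u^n,\,p^{n+1}-p^n)$, invokes the $L^2$ Stokes estimate from \eqref{C-5-0}--\eqref{C-5}, and bounds the same eight forcing terms $\mathcal L_1,\dots,\mathcal L_8$ exactly as in your term-by-term list.
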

\begin{proof}
We obtain from \eqref{C-5-0} and \eqref{C-5} that
\begin{align*}
& \| \nabla^2(u^{n+1}-u^n)\|_{L^2}^2 + \| \nabla(p^{n+1}-p^n)\|_{L^2}^2 \\
&\quad \lesssim \|(\rho^{n+1}-\rho^n)\pa_t u^{n+1}\|_{L^2}^2 + \| \rho^n \pa_t (u^{n+1}-u^n)\|_{L^2}^2 + \|(\rho^{n+1}-\rho^n) u^n \cdot \nabla u^{n+1}\|_{L^2}^2 \\
&\qquad + \| \rho^n(u^n-u^{n-1})\cdot \nabla u^{n+1}\|_{L^2}^2 + \| \rho^n u^{n-1} \cdot \nabla(u^{n+1}-u^n)\|_{L^2}^2 \\
&\qquad + \left\|(\rho^{n+1}-\rho^n) \int_{\R^3}(u^n-v)f^{n+1}\,dv \right\|_{L^2}^2 + \left\|  \rho^n \int_{\R^3}(u^n-u^{n-1})f^{n+1}\,dv \right\|_{L^2}^2 \\
&\qquad + \left\| \rho^n \int_{\R^3}(u^n-v)(f^{n+1}-f^n)\,dv      \right\|_{L^2}^2\\
&\quad =: \sum_{i=1}^8 \mathcal L_i.
\end{align*}
The estimates for $\mathcal L_i$ can be done in the way similar to Lemma \ref{cauchy-3} and we omit the details.
\begin{align*}
&\mathcal L_1, \,\mathcal L_3, \,\mathcal L_6 \lesssim \|\rho^{n+1}-\rho^n\|_{H^2}^2, \quad \mathcal L_2 \lesssim \|\pa_t(u^{n+1}-u^n)\|_{L^2}^2, \\
&\mathcal L_4, \, \mathcal L_7 \lesssim \| \nabla (u^n-u^{n-1})\|_{L^2}^2, \quad \mathcal L_5 \lesssim \| \nabla(u^n-u^{n-1})\|_{L^2}^2,\quad    \mathcal L_8 \lesssim \|f^{n+1}-f^n\|_q^2
\end{align*}
\end{proof}
%
%
%
%
\subsection{Proof of Theorem \ref{thm_main}}
We are now ready to prove the existence and uniqueness of solution to \eqref{main}. \newline

\noindent $\bullet$ (Existence):
We sum up \eqref{cauchy-1}, \eqref{cauchy-3}, and \eqref{cauchy-4} using \eqref{cauchy-2} to derive
\begin{align}
\begin{aligned} \label{D-1}
&\|f^{n+1}-f^n\|_{q}^2  + \|u^{n+1}-u^n\|^2_{H^1} + \int_0^t \| \nabla(u^{n+1}-u^n)(s)\|^2_{L^2} + \|\pa_s(u^{n+1}-u^n)(s)\|^2_{L^2}\,ds \\
&\quad \leq C \int_0^t \|(u^n-u^{n-1})(s)\|^2_{H^1} + \|(f^n-f^{n-1})(s)\|^2_{q}\,ds + C \int_0^t \int_0^s \|(u^n-u^{n-1})(\tau)\|^2_{H^2}\,d\tau ds.
\end{aligned}
\end{align}
We integrate both sides of \eqref{cauchy-5} from 0 to $t$ and use \eqref{cauchy-2} again to have
\begin{align}
\begin{aligned} \label{D-2}
&\int_0^t \| \nabla^2(u^{n+1}-u^n)(s)\|^2_{L^2} \cr
&\quad \leq C\int_0^t \|(u^n-u^{n-1})(s)\|^2_{H^1}\,ds + \|\nabla(u^{n+1}-u^n)(s)\|^2_{L^2}\,ds \\
& \qquad  + C\int_0^t  \|(\pa_s(u^{n+1}-u^n)(s)\|^2_{L^2} \,ds  + C\int_0^t \int_0^s \|(u^n-u^{n-1})(\tau)\|^2_{H^2} \,d\tau ds.
\end{aligned}
\end{align}
Combining \eqref{D-1} and \eqref{D-2} yields
$$\begin{aligned}
&\|f^{n+1}-f^n\|_{q}^2  + \|u^{n+1}-u^n\|^2_{H^1} + \int_0^t \| \nabla(u^{n+1}-u^n)(s)\|^2_{H^1}\,ds \\
&\quad \leq C \int_0^t \lt(\|(f^{n+1}-f^n)(s)\|_{q}^2 +  \| (u^n-u^{n-1})(s)\|^2_{H^1}\rt)ds  +C\int_0^t \lt( \int_0^s \| \nabla(u^n-u^{n-1})(\tau)\|^2_{H^1}\,d\tau \rt)\,ds.
\end{aligned}$$
Using the induction argument, we have
\begin{align*}
&\|f^{n+1}-f^n\|^2_{\mathcal C([0,T];L_q^\infty)} + \|u^{n+1}-u^n\|_{\mathcal C([0,T];H^1)}^2 + \| \nabla(u^{n+1}-u^n)\|_{L^2(0,T;H^1)}^2 \\
& \qquad \leq \frac{C(T)^{n+1}}{n!},
\end{align*}
which yields that there exist the limit function $(f,\rho,u)$ such that
$$\begin{aligned}
&f^n \to f \quad \mbox{in}~\mathcal C([0,T];L^\infty_{q}(\T^3 \times \R^3)), \quad \rho^n \to \rho \quad \mbox{in}~ \mathcal C([0,T];H^2(\T^3)) \\
&u^n \to u \quad \mbox{in}~\mathcal C([0,T];H^1(\T^3)) \cap L^2([0,T];H^2(\T^3)).
\end{aligned}$$
On the other hand, in view of uniform-in-$n$ boundedness(Lemmas \ref{lem_kinet} and \ref{lem_fluid}),  Banach-Alaoglu theorem yields that there exists a subsequence $(f^{n_k},\rho^{n_k},u^{n_k})$ and its weak limit $(\widetilde f, \widetilde \rho, \widetilde u)$ such that
$$\begin{aligned}
&f^{n_k }\rightharpoonup \widetilde f  \quad \mbox{weakly}-\ast \quad \mbox{in}~\mathcal C([0,T];W^{1,\infty}_{q}(\T^3 \times \R^3)), \\
& \rho^{n_k} \rightharpoonup \widetilde \rho \quad \mbox{in}~\mathcal C([0,T];H^3(\T^3)),\quad \mbox{and} \\
&u^{n_k} \rightharpoonup \widetilde u \quad \mbox{in}~\mathcal C([0,T];H^2(\T^3)) \cap L^2(0,T;H^3(\T^3)).
\end{aligned}$$
Then, we have
\begin{align*}
&f \equiv \widetilde f \quad \mbox{in}~\mathcal C([0,T];L^\infty_{q}(\T^3 \times \R^3)), \quad \rho \equiv \widetilde \rho \quad \mbox{in}~ \mathcal C([0,T];H^2(\T^3)),\quad \mbox{and} \\
&u \equiv \widetilde u  \quad \mbox{in}~\mathcal C([0,T];H^1(\T^3)) \cap L^2([0,T];H^2(\T^3)),
\end{align*}
which is due to the uniqueness of weak limit. We now claim that indeed
\begin{align*}
&f \equiv \widetilde f \quad \mbox{in}~\mathcal C([0,T];W_q^{1,\infty}(\T^3 \times \R^3)), \quad \rho \equiv \widetilde \rho \quad \mbox{in}~ \mathcal C([0,T];H^3(\T^3)),\quad \mbox{and} \\
&u \equiv \widetilde u  \quad \mbox{in}~\mathcal C([0,T];H^2(\T^3)) \cap L^2([0,T];H^3(\T^3)).
\end{align*}
To this end,
\begin{align*}
\left| \int_{\T^3 \times \R^3}\pa(f-\widetilde f)\phi \,dxdv\right| &= \left|\int_{\T^3 \times \R^3}(f-\widetilde f)\pa \phi \, dxdv \right| \\
&\leq \|f-\widetilde f\|_{\mathcal C(0,T;L^\infty)}\left| \int_{\T^3 \times \R^3}|\pa \phi|\,dxdv \right| = 0,
\end{align*}
for $\forall \phi \in \mathcal C_c^\infty(\T^3 \times \R^3)$. Thus, we have $\pa f = \pa \widetilde f$ a.e. in $\T^3 \times \R^3$ and the first assertion holds. Similarly,
\begin{align*}
\left| \int_{\T^3}\nabla^2 (u-\widetilde u)\phi \, dx\right| &= \left|\int_{\T^3}\nabla(u-\widetilde u)\nabla \phi \,dx\right| \\
&\leq \| \nabla(u-\widetilde u)\|_{\mathcal C(0,T;L^2)} \|\nabla \phi\|_{L^2} = 0, \quad \forall \phi \in \mathcal C_c(\T^3),
\end{align*}
which implies $u \equiv \widetilde u  ~ \mbox{in}~\mathcal C([0,T];H^2(\T^3))$. Moreover, it gives
\begin{align*}
\left|\int_0^T \int_{\T^3}\nabla^3(u-\widetilde u)\phi\,dxds \right| &= \left|\int_0^T \int_{\T^3}\nabla^2(u-\widetilde u)\nabla \phi \,dxds\right| \\
&\leq \left|\int_0^T \| \nabla^2(u-\widetilde u)\|_{L^2} \|\nabla \phi\|_{L^2}\, ds \right| = 0,
\end{align*}
which yields $\nabla^3 u = \nabla^3 \widetilde u$, a.e. in $\T^3 \times [0,T]$ and the third assertion holds. The second assertion can be proved in the same way, and we omit the proof. It now remains to prove the strong convergence of local Maxwellian $\mathcal M(f^n) \to \mathcal M(f)$ as $n \to \infty$, and it suffices to show the strong convergence of the macroscopic fields $(\rho_{f^n},U_{f^n},T_{f^n}) \to (\rho_f,U_f,T_f)$. First, note that
\begin{align*}
|\rho_{f^n}-\rho_f| \leq \int_{\R^3}|f^n-f|\,dv \lesssim \|f^n-f\|_q \to 0 \quad \mbox{as}~n \to \infty.
\end{align*}
Using this convergence and Lemma \ref{lem_macro_bdd}, we have
\begin{align*}
|U_{f^n}-U_f| &= \left| \frac{1}{\rho_{f^n}}\int_{\R^3}vf^n\,dv -  \frac{1}{\rho_{f}}\int_{\R^3}vf \,dv \right| \\
&\leq \frac{1}{\rho_{f^n}} \left| \int_{\R^3}v(f^n-f)\,dv     \right| + \left| \frac{1}{\rho_{f^n}}-\frac{1}{\rho_f}  \right|   \left|  \int_{\R^3}vf\,dv    \right| \\
&\lesssim \|f^n-f\|_q + |\rho_{f^n}-\rho_f| \to 0 \quad \mbox{as}~n \to \infty.
\end{align*}
In the similar way, we get
$$\begin{aligned}
&|T_{f^n}-T_f| \\
&\quad \leq \left| \frac{1}{3\rho_{f^n}} - \frac{1}{3\rho_f}  \right| \int_{\R^3}|v-U_{f^n}|^2 f^n dv + \frac{1}{3\rho_f}\left|\int_{\R^3}|v-U_{f^n}|^2 f^n  dv - \int_{\R^3} |v-U_f|^2 f dv  \right| \\
&\quad \lesssim  |\rho_{f^n}-\rho_f| + \int_{\R^3}|v-U_{f^n}|^2 |f^n-f|\,dv + \int_{\R^3}\left| |v-U_{f^n}|^2 - |v-U_f|^2  \right| f \,dv \\
&\quad \lesssim  |\rho_{f^n}-\rho_f| + \int_{\R^3}(1+|v|)^2 |f^n-f|\,dv + |U_{f^n} - U_f| \int_{\R^3} (1+ |v|)f\,dv \\
&\quad \lesssim   |\rho_{f^n}-\rho_f| + \|f^n-f\|_q + |U_{f^n}-U_f| \to 0 \quad \mbox{as}~n \to \infty.
\end{aligned}$$
\noindent $\bullet$ (Uniqueness): Let $(f_1,\rho_1,u_1)$ and $(f_2,\rho_2,u_2)$ be the solutions to system \eqref{main}-\eqref{ini_main} with the same initial data $(f_0,\rho_0,u_0)$. Using the argument similar to that in a series of Lemmas in this section, we can prove that the functional $\Delta(t) := \|f_1-f_2\|_{q}^2 + \|\rho_1-\rho_2\|_{H^2}^2 + \|u_1-u_2\|_{H^1}^2$ satisfies the following Gr\"onwall's inequality:
\[
\Delta(t) \lesssim \int_0^t \Delta(s)\,ds, \quad \Delta(0) = 0,
\]
which readily gives that
\begin{align*}
&f_1 \equiv f_2 \quad \mbox{in}~\mathcal C([0,T];L_q^\infty(\T^3 \times \R^3)), \quad \rho_1 \equiv \rho_2 \quad \mbox{in}~\mathcal C([0,T];H^2(\T^3)), \quad \mbox{and}\\
&u_1 \equiv u_2 \quad \mbox{in}~\mathcal C([0,T];H^1(\T^3)) \cap L^2([0,T];H^2(\T^3)).
\end{align*}
The same result for the higher regularity can be shown in the exactly same way as in the existence part.

%
%
%
%

\appendix
\section{Proof of Lemma \ref{lem_fluid} }\label{App}
\setcounter{equation}{0}
 We divide the proof into four steps. In each step, we will show the followings:\\

\noindent $\bullet$ In Step A, we provide the $H^1$-estimate of $u^{n+1}$:
\begin{equation*}
\|u^{n+1}\|_{\mathcal C([0,T];L^2)} + \|\nabla u^{n+1}\|_{L^2(0,T;L^2)} < \frac{\varepsilon^\alpha}{10}.
\end{equation*}
\noindent $\bullet$ In Step B, we show the first order and $\dot{H}^2(\T^3)$ estimates of $u^{n+1}$:
\begin{equation*}
\|\pa_t u^{n+1}\|_{L^2(0,T;L^2)} + \|\nabla^2 u^{n+1}\|_{L^2(0,T;L^2)} + \| \nabla u^{n+1}\|_{\mathcal C([0,T];L^2)} + \| \nabla p^{n+1}\|_{L^2(0,T;L^2)} < \frac{\varepsilon^\alpha}{10}.
\end{equation*}
\noindent $\bullet$ In Step C, we present the $H^1$-estimate of $\pa_t u^{n+1}$:
\begin{equation*}
\|\pa_t u^{n+1}\|_{\mathcal C(0,T];L^2)} + \| \nabla \pa_t u^{n+1}\|_{L^2(0,T;L^2)} < \varepsilon^{\alpha^*}< \varepsilon^\alpha,
\end{equation*}
where $\alpha < \alpha^* < \min \{\beta, (3\alpha)/2 \}$. \newline
\noindent $\bullet$ In Step D, we finally provide the high-order estimate of $u^{n+1}$:
\begin{equation*}
\| \nabla^2 u^{n+1}\|_{\mathcal C([0,T];L^2)} + \| \nabla^3 u^{n+1} \|_{L^2(0,T;L^2)} + \| \nabla p^{n+1}\|_{\mathcal C([0,T];L^2)} + \| \nabla^2 p^{n+1}\|_{L^2(0,T;L^2)} < \frac{\varepsilon^\alpha}{10}.
\end{equation*}

\noindent $\bullet$ (Step A): We take an inner product of both sides of $\eqref{app_main}_3$ with $u^{n+1}$ and integrate it over $\T^3$  to find
\begin{align*}
&\frac{1}{2}\frac{d}{dt}\int_{\T^3}\rho^{n+1}|u^{n+1}|^2 \,dx + \int_{\T^3} |\nabla u^{n+1}|^2 \,dx \\
&\qquad = \frac{1}{2}\int_{\T^3}(\nabla \cdot u^n)\rho^{n+1}|u^{n+1}|^2 \,dx - \int_{\T^3 \times \R^3} \rho^{n+1}(u^n-v)f^{n+1}\cdot u^{n+1}\,dxdv \\
&\qquad =: \mathcal I_1 + \mathcal I_2,
\end{align*}
where $\mathcal I_1$ can be easily estimated as
\begin{align*}
\mathcal I_1 \leq \frac 12 \| \nabla u^n\|_{L^\infty} \int_{\T^3} \rho^{n+1}|u^{n+1}|^2 \,dx.
\end{align*}
For $\mathcal I_2$, we obtain
\begin{align*}
\mathcal I_2 &\leq \int_{\T^3 \times \R^3}\rho^{n+1}|u^n||u^{n+1}|f^{n+1}\,dxdv + \int_{\T^3 \times \R^3} \rho^{n+1}|v||u^{n+1}|f^{n+1}\,dxdv \\
&\leq \left(\int_{\T^3} \rho^{n+1}|u^{n+1}|^2 \,dx \right)^{1/2}\left(\int_{\T^3} \rho^{n+1}|u^n|^2 \lt(\int_{\R^3} f^{n+1}\,dv \rt)^2 dx\right)^{1/2} \\
&\quad + \left(\int_{\T^3} \rho^{n+1}|u^{n+1}|^2 \,dx \right)^{1/2} \left(\rho^{n+1}\lt( \int_{\R^3} |v|f^{n+1}\,dv\rt)^2 dx \right)^{1/2} \\
&\leq C \left(\int_{\T^3} \rho^{n+1}|u^{n+1}|^2 \,dx \right)^{1/2} (\|u^n\|_{L^\infty}+1)\|\rho^{n+1}\|_{L^\infty}^{1/2} \|f^{n+1}\|_q \\
&\leq \frac 12 \int_{\T^3} \rho^{n+1}|u^{n+1}|^2 \,dx + C (\|u^n\|_{L^\infty}+1)^2 \|\rho^{n+1}\|_{L^\infty} \|f^{n+1}\|_q^2.
\end{align*}
Then, we have
\begin{align*}
&\frac{d}{dt} \int_{\T^3} \rho^{n+1}|u^{n+1}|^2 \,dx + \int_{\T^3} | \nabla u^{n+1}|^2 \,dx \\
&\quad \leq (\| \nabla u^n \|_{L^\infty} +1) \int_{\T^3} \rho^{n+1} |u^{n+1}|^2 \,dx + C (\|u^n\|_{L^\infty}^2 +1) \| \rho^{n+1}\|_{L^\infty} \|f^{n+1}\|^2_q \\
&\quad \leq C( \| \nabla u^n\|_{H^2} + 1) \int_{\T^3} \rho^{n+1}|u^{n+1}|^2 \,dx + C \| \rho_0\|_{L^\infty}( \varepsilon^{2\alpha}+1) \varepsilon^{2\beta}.
\end{align*}
We now use Gr\"onwall's lemma and Lemma \ref{lem_rho} to obtain
\begin{align*}
& \int_{\T^3} \rho^{n+1}|u^{n+1}|^2 \,dx +\int_0^t \int_{\T^3} | \nabla u^{n+1}|^2 \,dxds \\
&\quad \leq \left( \int_{\T^3} \rho_0 |u_0|^2 \,dx \right) e^{\int_0^t C(\| \nabla u^n\|_{H^2}+1)\,ds} + C \|\rho_0\|_{L^\infty}(\varepsilon^{2\alpha}+1)\varepsilon^{2\beta} \int_0^t e^{\int_s^t C( \| \nabla u^n\|_{H^2} +1)\,d\tau}\,ds
\end{align*}
The exponential terms are estimated as follows:
Since
\begin{equation*}
\frac{1}{T} \left( \int_0^T \|u^n\|_{H^3}\,dt \right)^2 \leq \int_0^T \|u^n\|^2_{H^3} \,dt < \varepsilon^{2\alpha},
\end{equation*}
we have
\begin{equation*}
e^{C\int_0^t (1+\| \nabla u^n\|_{H^2})\,ds} \leq e^{C(T + \sqrt{T}\varepsilon^\alpha)} < e^{C(T+\sqrt{T})}.
\end{equation*}
Therefore, we have
\begin{align*}
\|u\|_{L^\infty(0,T;L^2)} + \| \nabla u\|_{L^2(0,T;L^2)} \leq C(\varepsilon+\varepsilon^{\alpha+\beta }+\varepsilon^{ \beta}) < \frac{\varepsilon^{ \alpha}}{10},
\end{align*}
where we used the smallness of $\varepsilon$. \newline

\noindent $\bullet$ (Step B): We take an inner product of both sides of $\eqref{app_main}_3$ with $\pa_t u^{n+1}$ and integrate it over $\T^3$  to find that
\begin{align*}
& \int_{\T^3}\rho^{n+1} |\pa_t u^{n+1}|^2\,dx + \frac{1}{2} \frac{d}{dt} \int_{\T^3}| \nabla u^{n+1}|^2\,dx \\
&\quad = - \int_{\T^3}\rho^{n+1}(u^n \cdot \nabla u^{n+1})\cdot \pa_t u^{n+1}\, dx - \int_{\T^3}\rho^{n+1} \pa_t u^{n+1}\lt(\int_{\R^3}(u^n-v)f^{n+1}\,dv\rt)dx \\
&\quad \leq \int_{\T^3}\rho^{n+1}|\pa_t u^{n+1}| |u^n \cdot \nabla u^{n+1}|  \,dx + \int_{\T^3} \rho^{n+1}| \pa_t u^{n+1}| \left|\int_{\R^3}(u^n-v)f^{n+1}\,dv \right|  dx \\
&\quad \leq \int_{\T^3}\rho^{n+1}\left(\frac{|\pa_t u^{n+1}|^2}{4} + |u^n|^2 |\nabla u^{n+1}|^2 \right) dx \\
&\qquad + \int_{\T^3} \rho^{n+1} \left(\frac{|\pa_t u^{n+1}|^2}{4} + \left|\int_{\R^3}(u^n-v)f^{n+1}\,dv \right|^2  \right)  dx\\
&\quad = \frac{1}{2} \int_{\T^3}\rho^{n+1} |\pa_t u^{n+1}|^2 \,dx + \int_{\T^3} \rho^{n+1}|u^n|^2 |\nabla u^{n+1}|^2 \,dx + \int_{\T^3} \rho^{n+1}  \left|\int_{\R^3}(u^n-v)f^{n+1}\,dv \right|^2 dx.
\end{align*}
So, we have
\begin{align} \label{C-4}
\begin{aligned}
& \| \sqrt{\rho^{n+1} }\pa_t u^{n+1}\|_{L^2}^2   +  \frac{d}{dt} \| \nabla u^{n+1}\|_{L^2}^2   \\
&\qquad \quad \quad \leq 2\int_{\T^3} \rho^{n+1}|u^n|^2 |\nabla u^{n+1}|^2 \,dx + 2\int_{\T^3} \rho^{n+1}  \left|\int_{\R^3}(u^n-v)f^{n+1}\,dv \right|^2 dx.
\end{aligned}
\end{align}
We note that the linearized momentum equations $\eqref{app_main}_3$ and $\eqref{app_main}_4$ can be written as the stationary Stokes equations
\begin{equation} \label{C-5-0}
-\Delta u^{n+1} + \nabla p^{n+1} = -\rho^{n+1} \left(\pa_t u^{n+1} -  u^n \cdot \nabla u^{n+1} -  \int_{\R^3}(u^n-v)f^{n+1}\,dv\right), \quad \nabla \cdot u = 0.
\end{equation}
Then, we get
\begin{align} \label{C-5}
\begin{aligned}
&\| \nabla^2 u^{n+1}\|_{L^2}^2 + \| \nabla p^{n+1}\|_{L^2}^2 \\
&\quad \leq C \lt\| -\rho^{n+1} \pa_t u^{n+1} - \rho^{n+1} u^n \cdot \nabla u^{n+1} - \rho^{n+1} \int_{\R^3} (u^n-v)f^{n+1} \,dv \rt\|_{L^2}^2 \\
&\quad \leq C \| \rho_0\|_{H^2} \left( \|\sqrt{\rho^{n+1}}\pa_t u^{n+1}\|_{L^2}^2 + \| \sqrt{\rho^{n+1}} u^n \cdot \nabla u^{n+1}\|_{L^2}^2 + \left\| \sqrt{\rho^{n+1}}\int_{\R^3}(u^n-v)f^{n+1}\,dv\right\|_{L^2}^2 \right).
\end{aligned}
\end{align}

It follows from \eqref{C-4} and \eqref{C-5} that
\begin{align} \label{C-6}
\begin{aligned}
&\| \sqrt{\rho^{n+1} }\pa_t u^{n+1}\|_{L^2}^2   + \| \nabla^2 u^{n+1}\|_{L^2}^2 + \| \nabla p^{n+1}\|_{L^2}^2 +  \frac{d}{dt} \| \nabla u^{n+1}\|_{L^2}^2 \\
&\qquad \leq C(1+ \| \rho_0\|_{H^2})^2 \left( \| u^n \cdot \nabla u^{n+1}\|_{L^2}^2 + \int_{\T^3} \left| \int_{\R^3} (u^n-v)f^{n+1} \,dv \right|^2 dx \right) \\
&\qquad \leq C(1+ \| \rho_0\|_{H^2})^2 ( \varepsilon^{2\alpha} \| \nabla u^{n+1}\|_{L^2}^2 + \varepsilon^{2\beta}).
\end{aligned}
\end{align}
Here, we used that
\begin{align*}
 \lt\| \int_{\R^3}(u^n-v)f^{n+1}\,dv \rt\|_{L^\infty} &\leq \lt\| \int_{\R^3} u^n f^{n+1}\,dv\rt\|_{L^\infty} + \lt\|\int_{\R^3} vf^{n+1}\,dv \rt\|_{L^\infty} \\
&\leq C\|u^n\|_{L^\infty}\|f^{n+1}\|_q + C\|f^{n+1}\|_q \\
&<C(\|u^n\|_{L^\infty}+1) \varepsilon^{2\beta} \\
&< C(\varepsilon^\alpha +1)\varepsilon^{2\beta}.
\end{align*}
for the last inequality. We now use Gr\"onwall's lemma to \eqref{C-6} to get
\begin{align*}
&\int_0^t \left( \| \sqrt{\rho^{n+1} }\pa_s u^{n+1}\|_{L^2}^2   + \| \nabla^2 u^{n+1}\|_{L^2}^2 + \| \nabla p^{n+1}\|_{L^2}^2 \right)ds + \| \nabla u^{n+1}\|_{L^2}^2 \\
&\quad \leq  C(1+\|\rho_0\|_{H^2}^2)(\varepsilon^{4\alpha} + \varepsilon^{2\beta}).
\end{align*}
Finally, we take supremum over $0 \leq t \leq T$ to obtain the desired result. \newline

\noindent $\bullet$ (Step C):  Note that
\begin{equation} \label{C-7}
\frac{d}{dt} \int_{\T^3} \rho^{n+1}|\pa_t u^{n+1}|^2 \,dx = \int_{\T^3}\pa_t \rho^{n+1}|\pa_t u^{n+1}|^2 \,dx + 2\int_{\T^3}\rho^{n+1}\pa_t u^{n+1} \cdot \pa_t^2 u^{n+1} \,dx.
\end{equation}
The first term can be estimated as follows.
$$\begin{aligned}
&\int_{\T^3}\pa_t\rho^{n+1}|\pa_t  u^{n+1}|^2 \,dx \cr
&\quad = -\int_{\T^3}(u^n \cdot \nabla \rho^{n+1})|\pa_t u^{n+1}|^2 \,dx \\
&\quad = \int_{\T^3}(\nabla \cdot u^n)\rho^{n+1}|\pa_t u^{n+1}|^2 \,dx + 2\int_{\T^3}\rho^{n+1}\pa_t u^{n+1} \cdot(u^n \cdot \nabla \pa_t u^{n+1})\,dx \\
&\quad \leq \| \nabla u^n\|_{L^\infty}  \int_{\T^3}\rho^{n+1} |\pa_t u^{n+1}|^2 \,dx +  2\int_{\T^3}\rho^{n+1}\pa_t u^{n+1} \cdot(u^n \cdot \nabla \pa_t u^{n+1}) \,dx.
\end{aligned}$$
We now give the estimates for the second term in \eqref{C-7}.
In view of $\eqref{app_main}_2$,  differentiating $\eqref{app_main}_3$ with respect to $t$ yields
\begin{align} \label{C-8}
\begin{aligned}
&\rho^{n+1}\pa_t^2 u^{n+1} \\
&\quad= (u^n \cdot \nabla \rho^{n+1})\lt(\pa_t u^{n+1} + u^n \cdot \nabla u^{n+1} + \int_{\R^3}(u^n-v)f^{n+1}\,dv\rt) \\
&\qquad - \rho^{n+1}\lt(\pa_t u^n \cdot \nabla u^{n+1} + u^n \cdot \nabla\pa_t u^{n+1}+ \int_{\R^3}f^{n+1} \pa_t u^n dv +\int_{\R^3} u^n \pa_t f^{n+1}\,dv\rt) \\
&\qquad - \nabla\pa_t p^{n+1} + \Delta\pa_t u^{n+1}.
\end{aligned}
\end{align}
Taking an inner product of both sides of $\eqref{C-8}$ with $\pa_t u^{n+1}$ and integrating it over $\T^3$  to obtain
\begin{align*}
&\int_{\T^3}\rho^{n+1}\pa_t u^{n+1} \cdot \pa_t^2 u^{n+1} \,dx \\
&\quad= \int_{\T^3}\pa_t u^{n+1} \cdot (u^n \cdot \nabla \rho^{n+1})\lt(\pa_t u^{n+1} + u^n \cdot \nabla u^{n+1} + \int_{\R^3}(u^n-v)f^{n+1}dv\rt)\,dx \\
&\qquad -\int_{\T^3}\pa_t u^{n+1} \cdot \rho^{n+1}\lt(\pa_t u^{n+1} \cdot \nabla u^{n+1} + u^n \cdot \nabla \pa_t u^{n+1} + \int_{\R^3}f^{n+1}\pa_t u^n \,dv + \int_{\R^3}u^n \pa_t f^{n+1}\,dv\rt) dx\\
&\qquad - \int_{\T^3}|\nabla \pa_t u^{n+1}|^2 \,dx \\
&=: \sum_{i=1}^7 \mathcal J_i.
\end{align*}
Here $\mathcal J_i, i=1,\cdots,7$ can be estimated as follows.
\begin{align*}
\mathcal J_1 &\leq \|\nabla \rho^{n+1}\|_{L^\infty} \|u^n\|_{L^\infty}\|\pa_t u^{n+1}\|_{L^2}^2 \leq C \|\pa_t u^{n+1}\|_{L^2}^2 \varepsilon^\alpha,\\
\mathcal J_2 &\leq \| \nabla \rho^{n+1}\|_{L^\infty}\|u^n\|_{L^\infty}^2 \|\pa_t u^{n+1}\|_{L^2}\| \nabla u^{n+1}\|_{L^2} \leq C\|\pa_t u^{n+1}\|_{L^2}^2\varepsilon^{3\alpha} \leq \|\pa_t u^{n+1}\|_{L^2}^2 \varepsilon^{2\alpha} + C\varepsilon^{4\alpha},    \\
\mathcal J_3  &\leq \|\nabla \rho^{n+1}\|_{L^\infty}\|u^n\|_{L^\infty}\|\pa_t u^{n+1}\|_{L^2}\lt\| \int_{\R^3}(u^n-v)f^{n+1}\,dv \rt\|_{L^2}\\
 &\leq C\|\pa_t u^{n+1}\|_{L^2}(1+\varepsilon^\alpha)\varepsilon^{\alpha+2\beta} \leq \|\pa_t u^{n+1}\|_{L^2}^2 \varepsilon^\beta + C\varepsilon^{2\alpha+\beta},\\
 \mathcal J_4 &\leq \|\rho^{n+1}\|_{L^\infty} \|\pa_t u^{n+1}\|_{L^6} \|\pa_t u^{n}\|_{L^3}\|\nabla u^{n+1}\|_{L^2}\\
 &\leq C\|\nabla \pa_t u^{n+1}\|_{L^2}\|\pa_t u^n\|_{H^1} \|\nabla u^{n+1}\|_{L^2} \\
 &\leq C\|\pa_t u^n\|_{H^1}^2\varepsilon^\alpha + \frac 13 \|\pa_t \nabla u^{n+1}\|_{L^2}^2,\\
\mathcal J_5 &\leq \|\rho^{n+1}\|_{L^\infty} \|u^n\|_{L^\infty} \|\pa_t u^{n+1}\|_{L^2}\|\nabla \pa_t u^{n+1}\|_{L^2} \\
 &\leq C\| \pa_t u^{n+1}\|_{L^2}^2\varepsilon^\alpha + \frac 13 \| \nabla \pa_t u^{n+1}\|_{L^2}^2,\\
\mathcal J_6  &\leq C\|\rho_0\|_{H^3} \|\pa_t u^{n+1}\|_{L^6}\| \pa_t u^n\|_{L^2}\lt\|\int_{\R^3}f^{n+1}\,dv\rt\|_{L^3} \\
 &\leq C\|\nabla \pa_t u^{n+1}\|_{L^2}\|\pa_t u^n\|_{L^2}\|f^{n+1}\|_q \\
&\leq C\varepsilon^{2\alpha+2\beta}+\frac 13 \|\nabla \pa_t u^{n+1}\|_{L^2}^2, \\
\mathcal J_7 &= \int_{\T^3}\pa_t u^{n+1} \cdot \rho^{n+1}\lt(\int_{\R^3}u^n\lt(v \cdot \nabla f^{n+1} + \nabla_v \cdot(\rho^{n+1}(u^n-v)f^{n+1})\rt)dv       \rt)   dx \\
&= \int_{\T^3 \times \R^3}\rho^{n+1}(\pa_t u^{n+1} \cdot u^n)(v\cdot \nabla f^{n+1})\,dxdv \\
&\quad+ \int_{\T^3 \times \R^3} (\rho^{n+1})^2 (\pa_t u^{n+1} \cdot u^n)\big((u^n-v)\cdot \nabla_v f ^{n+1}-3f^{n+1})\,dxdv\\
&\leq C\|\rho_0\|_{H^3}\|\pa_t u^{n+1}\|_{L^2}\|u^n\|_{L^2}\lt\| \int_{\R^3}v\cdot \nabla f^{n+1}\,dv\rt\|_{L^\infty} \\
&\quad + C\| \rho_0\|_{H^3}^2\|\pa_t u^{n+1}\|_{L^2} \|u^n\|_{L^2}\left(\left\| \int_{\R^3}(u^n-v)\cdot \nabla_v f^{n+1} \, dv\right\|_{L^\infty} + 3\left\| \int_{\R^3}f^{n+1}\,dv\right\|_{L^\infty} \right) \\
&\leq C\|\pa_t u^{n+1}\|_{L^2}\|u^n\|_{L^2}\|\nabla f^{n+1}\|_q + C\|\pa_t u^{n+1}\|_{L^2}\|u^n\|_{L^2}\lt((\|u^n\|_{L^\infty}+1)\| \nabla_v f^{n+1}\|_q+\|f^{n+1}\|_q \rt)\\
&\leq C\| \pa_t u^{n+1}\|_{L^2}\varepsilon^{\alpha+\beta} + C\|\pa_t u^{n+1}\|_{L^2}(\varepsilon^{2\alpha+\beta} + \varepsilon^{\alpha+\beta}) \\
&\leq C(\varepsilon^{2\beta}+\varepsilon^{4\alpha}) + (\varepsilon^{2\alpha}+\varepsilon^{2\beta})\|\pa_t u^{n+1}\|_{L^2}^2.
\end{align*}
We sum up the estimates above and use \eqref{C-7} to obtain
\begin{align*}
&\frac{d}{dt}\int_{\T^3}\rho^{n+1}|\pa_t u^{n+1}|^2\,dx + \int_{\T^3}|\nabla \pa_t u^{n+1}|^2\,dx\\
&\quad \leq \| \nabla u^n\|_{L^\infty}\int_{\T^3}\rho^{n+1} |\pa_t u^{n+1}|^2\,dx + C \| \pa_t u^{n+1}\|_{L^2}^2 \varepsilon^{\alpha} + C\|\pa_t u^n\|_{H^1}^2 \varepsilon^{\alpha} + C(\varepsilon^{2\beta}+\varepsilon^{4\alpha}).
\end{align*}
Using Gr\"onwall's lemma, we get
\begin{align*}
&\int_{\T^3}\rho^{n+1}|\pa_t u^{n+1}|^2\,dx + \int_0^t \| \nabla \pa_s u^{n+1}\|_{L^2}^2\,ds \\
&\quad\leq  \int_{\T^3}\rho_0^{n+1}| \pa_t u^{n+1}|_{t \to 0^+}^2 \,dx\,\exp\lt(\int_0^T \| \nabla u^n\|_{L^\infty}\,ds\rt)\\
&\qquad +C \int_0^t (\|\pa_s u^{n+1}\|_{L^2}^2 \varepsilon^\alpha + \| \pa_s u^n\|_{H^1}^2 \varepsilon^\alpha + \varepsilon^{2\alpha})\exp\lt(\int_s^t \|\nabla u^n\|_{L^\infty} \,d\tau\rt)ds \\
&\quad \leq C\int_{\T^3}\rho_0^{n+1}| \pa_t u^{n+1}|_{t \to 0^+}^2 \,dx \exp(\| \nabla u^n\|_{L^2(0,T;L^\infty)}) \\
&\qquad+ C(\| \pa_t u^{n+1}\|^2_{L^2(0,T;L^2)}\varepsilon^\alpha + \| \pa_t u^n\|^2_{L^2(0,T;H^1)}\varepsilon^\alpha + \varepsilon^{2\beta}+\varepsilon^{4\alpha})\exp(\| \nabla u^n\|_{L^2(0,T;L^\infty)})\\
&\leq C\int_{\T^3}\rho_0^{n+1}| \pa_t u^{n+1}|_{t \to 0^+}^2 \,dx + C(\varepsilon^{3\alpha} + \varepsilon^{2\beta}),
\end{align*}
where we used the smallness of $\varepsilon$ for the last inequality. We can also derive the following estimates similarly:
\begin{align*}
\int_{\T^3}\rho^{n+1}|\pa_t u^{n+1}|^2 \,dx &= \int_{\T^3} \left(\rho^{n+1} \int_{\R^3} (v-u^n)f^{n+1}\,dv-\rho^{n+1} u^n \cdot \nabla u^{n+1} + \Delta u^{n+1} \right) \cdot \pa_t u^{n+1} \,dx \\
&\leq C(\varepsilon^{2\beta} + \varepsilon^{4\alpha}) + C\int_{\T^3} | \Delta u^{n+1}|^2 \,dx,
\end{align*}
which readily gives
\begin{align*}
\int_{\T^3} \rho_0^{n+1} |\pa_t u^{n+1}|_{t \to 0^+}^2 \,dx &\leq C(\varepsilon^{2\beta} + \varepsilon^{4\alpha}) + C\int_{\T^3} | \Delta u_0|^2 \,dx \\
&<C(\varepsilon^{2\beta} + \varepsilon^{4\alpha} + \varepsilon^{2}) <C(\varepsilon^{2\beta} + \varepsilon^{4\alpha}).
\end{align*}
Finally, we obtain that
\begin{align*}
\|\pa_t u^{n+1}\|_{\mathcal C(0,T];L^2)} + \| \nabla \pa_t u^{n+1}\|_{L^2(0,T;L^2)} < C(\varepsilon^{\beta}+\varepsilon^{\frac 32 \alpha}) < \varepsilon^{\alpha^*},
\end{align*}
where $\alpha < \alpha^* < \min \{\beta, (3\alpha)/2 \}$. \newline

\noindent $\bullet$ (Step D): We get from \eqref{C-7} that
\begin{align*}
&\| \nabla^2 u^{n+1}\|_{\mathcal C([0,T];L^2)}^2 + \| \nabla p^{n+1}\|_{\mathcal C([0,T];L^2)}^2 \\
&\quad \leq C \| \rho_0\|_{H^2} \bigg( \|\sqrt{\rho^{n+1}}\pa_t u^{n+1}\|_{\mathcal C([0,T];L^2)}^2  + \| \sqrt{\rho^{n+1}} u^n \cdot \nabla u^{n+1}\|_{\mathcal C([0,T];L^2)}^2  \\
&\qquad + \left\| \sqrt{\rho^{n+1}}\int_{\R^3}(u^n-v)f^{n+1}\,dv\right\|_{\mathcal C([0,T];L^2)}^2  \bigg)\\
&\quad \leq C\Big(\| \pa_t u^{n+1}\|_{\mathcal C(0,T;L^2)}^2 + \|u^n\|_{\mathcal C(0,T;L^\infty)}^2 \| \nabla u^{n+1}\|_{\mathcal C(0,T;L^2)}^2 + (\| u^n\|_{\mathcal C(0,T;L^\infty)}^2+1) \|f\|_{L^\infty(0,T;L^\infty_q)}^2\Big)\\
&\quad \leq C(\varepsilon^{2\alpha^*  } + \varepsilon^{4\alpha  } + \varepsilon^{2\alpha+2\beta  }  + \varepsilon^{ 2\beta }),
\end{align*}
which readily gives that
\begin{equation} \label{C-9}
\| \nabla^2 u^{n+1}\|_{\mathcal C([0,T];L^2)} + \| \nabla p^{n+1}\|_{\mathcal C([0,T];L^2)} < \frac{\varepsilon^{ \alpha}}{20}.
\end{equation}
We now give the estimates of \eqref{C-5-0} for the higher regularity.
\begin{align} \label{C-10}
\begin{aligned}
&\| \nabla^3 u^{n+1}\|_{L^2}^2 + \| \nabla^2 p^{n+1}\|_{L^2}^2 \\
&\quad \leq C\left( \| \nabla(\rho^{n+1}\pa_t u^{n+1})\|_{L^2}^2 +\| \nabla(\rho^{n+1} u^n \cdot \nabla u^{n+1})\|_{L^2}^2 +\lt\| \nabla \lt(\rho^{n+1} \int_{\R^3}(v-u^n)f^{n+1} \,dv \rt)\rt\|_{L^2}^2 \right)\\
&\quad =: \sum_{i=1}^3 \mathcal K_i.
\end{aligned}
\end{align}
Using the previous steps and \eqref{C-9}, we get
\begin{align*}
&\mathcal K_1 \leq C(\| \pa_t u^{n+1}\|_{L^2}^2 + \| \nabla \pa_t u^{n+1}\|_{L^2}^2) \leq C(\varepsilon^{2\beta} +  \| \nabla \pa_t u^{n+1}\|_{L^2}^2),\\
&\mathcal K_2 \leq C(\| u^n\|_{L^\infty}^2 \| \nabla u\|_{L^2}^2 + \| \nabla u^n\|_{L^\infty}^2 \| \nabla u^{n+1}\|_{L^2}^2 + \|u^n\|_{L^\infty}^2 \| \nabla^2 u^{n+1}\|_{L^2}^2) \\
&\quad \leq C(\| \nabla u^n\|_{L^\infty}^2 \varepsilon^{2\alpha} + \varepsilon^{4\alpha}),\\
&\mathcal K_3 \leq C \lt\| \nabla\lt(\rho^{n+1} \int_{\R^3}vf^{n+1}\,dv\rt)\rt\|_{L^\infty}^2 + C\lt\|\nabla \lt( \rho^{n+1} \int_{\R^3}u^n f^{n+1} \,dv\rt)\rt\|_{L^\infty}^2  \\
&\quad \leq C\bigg(\lt\| \int_{\R^3} vf^{n+1} \,dv\rt\|_{L^\infty}^2 + \lt\| \int_{\R^3}v\nabla f^{n+1}\,dv\rt\|_{L^\infty}^2 + \lt\| \int_{\R^3} u^n f^{n+1}\,dv    \rt\|_{L^\infty}^2 \\
&\qquad + \lt\| \int_{\R^3}f^{n+1} \nabla u^n  \,dv    \rt\|_{L^\infty}^2 + \lt\| \int_{\R^3}u^n \nabla f^{n+1}\,dv \rt\|_{L^\infty}^2\bigg) \\
&\quad \leq C(\varepsilon^{2\beta} + \varepsilon^{2\alpha+ 2\beta} + \varepsilon^{2\beta}\| \nabla u^n\|_{L^\infty}^2).
\end{align*}
Combining this with \eqref{C-10} and using \eqref{C-9} again, we have
\begin{align*}
&\| \nabla^3 u^{n+1}\|_{L^2(0,T;L^2)}^2 + \| \nabla^2 p^{n+1}\|_{L^2(0,T;L^2)}^2 \\
&\qquad \leq C\lt( \| \nabla \pa_t u^{n+1}\|_{L^2(0,T;L^2)}^2 + ( \varepsilon^{2\alpha}+\varepsilon^{2\beta} ) \|  u^n\|_{L^2(0,T;H^3)}^2+ \varepsilon^{2\beta} + \varepsilon^{4\alpha} + \varepsilon^{2\alpha+2\beta}\rt)\\
&\qquad \leq C(\varepsilon^{ 4\alpha} + \varepsilon^{ 2\alpha+2\beta} + \varepsilon^{2\beta }+\varepsilon^{2\alpha^*  }),
\end{align*}
where $C > 0$ is independent of $n$. This gives
\begin{align*}
\| \nabla^3 u^{n+1}\|_{L^2(0,T;L^2)} + \| \nabla^2 p^{n+1}\|_{L^2(0,T;L^2)} \leq C(\varepsilon^{ 2\alpha} + \varepsilon^{ \alpha + \beta} + \varepsilon^{ \beta}+\varepsilon^{\alpha^*}) < \frac{\varepsilon^{ \alpha}}{20}.
\end{align*}

\section*{Acknowledgments}
YPC was supported by National Research Foundation of Korea(NRF) grant funded by the Korea government(MSIP) (No. 2017R1C1B2012918 and 2017R1A4A1014735) and POSCO Science Fellowship of POSCO TJ Park Foundation. Seok-Bae Yun is supported by Samsung Science and Technology Foundation under Project Number SSTF-BA1801-02.

%
%
%
%


\begin{thebibliography}{10}

\bibitem{BCHK} Bae, H.-O., Choi, Y.-P., Ha, S.-Y., and Kang, M.-J.: Time-asymptotic interaction of flocking particles and incompressible viscous fluid, Nonlinearity, {\bf25} (2012), 1155--1177.


\bibitem{BCHK14} Bae, H.-O., Choi, Y.-P., Ha, S.-Y., and Kang, M.-J.: Global existence of strong solution for the Cucker-Smale-Navier-Stokes system, J. Differential Equations, {\bf 257}, (2014), 2225--2255.

\bibitem{BY} Bang, J. and Yun, S.-B.: Stationary solutions for the ellipsoidal BGK model in a slab, J. Differential  Equations, {\bf261} (2016), 5803--5828.

\bibitem{BDM} Benjelloun, S., Desvillettes, L., and Moussa, A.: Existence theory for the kinetic-fluid coupling when small droplets are treated as part of the fluid, J. Hyperbolic Differ. Equ., {\bf11} (2014), 109--133.

\bibitem{V} Berthelin, F. and Vasseur, A.: From kinetic equations to multidimensional isentropic gas dynamics before shocks, SIAM J. Math. Anal., {\bf36} (2005), 1807--1835.

\bibitem{BGK}  Bhatnagar, P. L., Gross, E. P. and Krook, M.: A model for collision processes in gases. I. Small
amplitude process in charged and neutral one-component systems, Phys. Rev., {\bf 94} (1954), 511--525.

\bibitem{BDGM} Boudin, L., Desvillettes, L., Grandmont, C., and Moussa, A.: Global existence of solution for the coupled Vlasov and Naiver-Stokes equations, Differential and Integral Equations, {\bf 22} (2009), 1247--1271.

\bibitem{BDM} Boudin, L., Desvillettes, L. and Motte, R.: A modelling of compressible droplets in a fluid
Commun. Math. Sci., {\bf 1} (2003), 657--669.

\bibitem{CDM} Carrillo, J. A., Duan, R., and Moussa, A.: Global classical solutions close to the equilibrium to the Vlasov-Fokker-Planck-Euler system, Kinet. Relat. Models, {\bf 4} (2011), 227--258.

\bibitem{CCK} Carrillo, J. A., Choi, Y.-P., and Karper, T. K.: On the analysis of a coupled kinetic-fluid model with local alignment forces, Ann. Inst. Henri Poincar\' e Anal. Non Lin\'eaire, {\bf 33} (2016), 273--307.


\bibitem{CKL} Chae, M., Kang, K., and Lee, J.: Global classical solutions for a compressible
uid-particle interaction model, J. Hyperbolic Differ. Equ., {\bf 10} (2013), 537--562.

\bibitem{Choi16} Choi, Y.-P.: Large-time behavior of the Vlasov/compressible Navier-Stokes equations: J. Math. Phys., {\bf 57} (2016), 071501.

\bibitem{Choi17} Choi, Y.-P.: Finite-time blow-up phenomena of Vlasov/Navier-Stokes equations and related systems, J. Math. Pures Appl., {\bf 108} (2017), 991--1021.

\bibitem{CK} Choi, Y.-P. and Kwon, B.: Global well-posedness and large-time behavior for the inhomogeneous Vlasov-Navier-Stokes equations, Nonlinearity, {\bf 28} (2015), 3309--3336.

\bibitem{CL} Choi, Y.-P. and Lee, J.: Global existence of weak and strong solutions to Cucker-Smale-Navier-Stokes
equations in $\R^2$, Nonlinear Analysis: Real World Applications, {\bf27} (2016), 158--182.

\bibitem{CY} Choi, Y.-P. and Yun, S.-B.: Global existence of weak solutions for Navier-Stokes-BGK system, preprint, arXiv:1801.08283

\bibitem{CP} Coron, F. and Perthame, B. : Numerical passage from kinetic to fluid equations, SIAM J. Numer. Anal. {\bf28} (1991),  26--42.

\bibitem{DP} Dimarco, G. and Pareschi, L.: Numerical methods for kinetic equations. Acta Numer., {\bf23} (2014), 369--520.

\bibitem{DMOS} Dolbeault, J., Markowich, P., Oelz, D., Schmeiser, C.: Non linear diffusions as limit of kinetic equations with relaxation collision kernels. Arch. Ration. Mech, Anal. {\bf 186} (2007), 133--158.

\bibitem{FJ} Filbet, F. and Jin, S.: A class of asymptotic-preserving schemes for kinetic equations and related problems with stiff sources. J. Comput. Phys., {\bf229} (2010), 7625--7648.

\bibitem{Ha} Hamdache, K.: Global existence and large time behaviour of solutions for the Vlasov-Stokes equations, Jpn. J. Ind. Appl. Math., {\bf 15} (1998), 51--74.

\bibitem{Ka} Kato, T.: Linear evolution equations of ‘hyperbolic’ type, II, J. Math. Soc. Japan, {\bf 25} (1973), 648--666.

\bibitem{KP} Klingenberg, C. and Pirner, M.: Existence, uniqueness and positivity of solutions for BGK models for mixtures, J. Differential Equations, {\bf 264} (2018), 702--727.

\bibitem{LT} Lions, P. L. and Toscani, G.: Diffusive limit for finite velocity Boltzmann kinetic models, Rev. Mat. Iberoamericana, {\bf 13} (1997), 473--513.

\bibitem{M} Mathiaud, J.: Local smooth solutions of a thin spray model with collisions, Math. Mod. Meth. Appl. Sci.,
{\bf20} (2010), 191--221.

\bibitem{Mellet} Mellet, A.: Fractional diffusion limit for collisional kinetic equations: a moments method, Indiana Univ. Math. J., {\bf 59} (2010), 1333--1360.

\bibitem{MMM} Mellet, A., Mischler, S., and Mouhot, C.: Fractional diffusion limit for collisional kinetic equations, Arch. Ration. Mech. Anal., {\bf 199} (2011), 493--525.

\bibitem{MV} Mellet, A. and Vasseur, A.: Global weak solutions for a Vlasov-Fokker-Planck/Navier-Stokes system of equations, Math. Models Methods Appl. Sci., {\bf 17} (2007), 1039--1063.

\bibitem{M} Mieussens, L.: Discrete velocity model and implicit scheme for the BGK equation of rarefied gas dynamics, Math. Models Methods Appl. Sci., {\bf10} (2000), 1121--1149.

\bibitem{PY} Park, S. and Yun, S.-B.: Cauchy problem for ellipsoidal BGK model for polyatomic particles,  J. Differential Equations, {\bf 266} (2019), 7678--7708.

\bibitem{Pe} Perthame, B.: Global existence to the BGK model of Boltzmann equation, J. Differential Equations, {\bf 82} (1989), 191--205.

\bibitem{PP93} Perthame, B. and Pulvirenti, M.: Weighted $L^\infty$ bounds and uniqueness for the Boltzmann BGK model, Arch. Rational Mech. Anal., {\bf 125} (1993), 289--295.

\bibitem{PP07} Pieraccini, S. and Puppo, G.: Implicit-explicit schemes for BGK kinetic equations, J. Sci. Comput., {\bf32} (2007), 1--28.

\bibitem{RSY} Russo, G., Santagati, P. and Yun, S.-B.: Convergence of a semi-Lagrangian scheme for the BGK model of the Boltzmann equation, SIAM J. Numer. Anal., {\bf50} (2012), 1111--1135.

\bibitem{SR} Saint-Raymond, L.: Discrete time Navier-Stokes limit for the BGK Boltzmann equation, Comm. Partial Differential Equations, {\bf 27} (2002), 149--184.

\bibitem{SR1} Saint-Raymond, L.: Du mod\`{e}le BGK de l'\`{e}quation de Boltzmann aux \`{e}quations d'Euler des fluides incompressibles. (French) [From the BGK Boltzmann model to the Euler equations of incompressible fluids], Bull. Sci. Math., {\bf 126} (2002), 493--506.


\bibitem{SR2} Saint-Raymond, L.: From the BGK model to the Navier-Stokes equations, Ann. Sci. I'\'Ecole Norm. Sup., {\bf 36} (2003), 271--317.

\bibitem{WY} Wang, D. and Yu, C.: Global weak solutions to the inhomogeneous Navier-Stokes-Vlasov equations, J. Differential Equations, {\bf 259} (2014), 3976--4008.

\bibitem{Ukai-BGK} Ukai, S.: Stationary solutions of the BGK model equation on a finite interval with large boundary data, Transport theory Statist. Phys., {\bf 21} (1992) no.4-6.

\bibitem{XH} Xu, K. and Huang, J.-C.: A unified gas-kinetic scheme for continuum and rarefied flows. J. Comput. Phys., {\bf229} (2010), 7747--7764.

\bibitem{YY}Yao, L. and Yu, C.: Existence of global weak solutions for the Navier-Stokes-Vlasov-Boltzmann equations, J. Differential Equations, {\bf265} (2018), 5575--5603.

\bibitem{Yun1} Yun, S.-B.: Cauchy problem for the Boltzmann-BGK model near a global Maxwellian, J. Math. Phys., {\bf 51} (2010), 123514.

\bibitem{Yun2} Yun, S.-B.: Classical solutions for the ellipsoidal BGK model with fixed collision frequency, J. Differential Equations,  {\bf 259} (2015), 6009--6037.

\bibitem{Yun21} Yun, S.-B.: Ellipsoidal BGK model for polyatomic molecules near Maxwellians: A dichotomy in the dissipation estimate, J. Differential Equations, {\bf 266} (2019), 5566--5614.

\bibitem{Yun3} Yun, S.-B.: Ellipsoidal BGK model near a global Maxwellian, SIAM J. Math. Anal., {\bf 47} (2015), 2324--2354.

\bibitem{Zhang} Zhang, X.: On the Cauchy problem of the Vlasov-Posson-BGK system: global existence of weak solutions, J. Stat. Phys., {\bf141} (2010), 566--588.

\bibitem{ZH} Zhang, X. and Hu, S.: $L^p$ solutions to the Cauchy problem of the BGK equation, J. Math. Phys., {\bf 48} (2007), 113304.

\end{thebibliography}
\end{document}